\newtheorem{theorem}{Theorem}[section]
\newtheorem{definition}{Definition}[section]
\newtheorem{remark}{Remark}[section]
\newtheorem{lemma}{Lemma}[section]
\newcommand\BibTeX{{\rmfamily B\kern-.05em \textsc{i\kern-.025em b}\kern-.08em
T\kern-.1667em\lower.7ex\hbox{E}\kern-.125emX}}
\begin{document}

\runninghead{Wotte et al.}

\title{Optimal Potential Shaping on SE(3) via Neural ODEs on Lie Groups}

\author{Yannik P. Wotte \affilnum{1},       
        Federico Califano  \affilnum{1},    
        Stefano Stramigioli \affilnum{1}}

\affiliation{\affilnum{1}RaM, University of Twente, NL} 

\corrauth{Yannik P. Wotte, RaM, University of Twente, Drienerlolaan 5, 7522NB Enschede, NL.}

\email{y.p.wotte@utwente.nl}

\begin{abstract}
This work presents a novel approach for the optimization of dynamic systems on finite-dimensional Lie groups. 
We rephrase dynamic systems as so-called neural ordinary differential equations (neural ODEs), and formulate the optimization problem on Lie groups. 
A gradient descent optimization algorithm is presented to tackle the optimization numerically. Our algorithm is scalable, and applicable to any finite dimensional Lie group, including matrix Lie groups. By representing the system at the Lie algebra level, we reduce the computational cost of the gradient computation.
In an extensive example, optimal potential energy shaping for control of a rigid body is treated. 
The optimal control problem is phrased as an optimization of a neural ODE on the Lie group $SE(3)$, and the controller is iteratively optimized. 
The final controller is validated on a state-regulation task.
\end{abstract}

\keywords{Nonlinear Control, Deep Learning, Differential Geometry}

\maketitle


\section{Introduction}\label{sec:int}

\begin{figure*}
    \centering
    \includegraphics[width=0.9\textwidth]{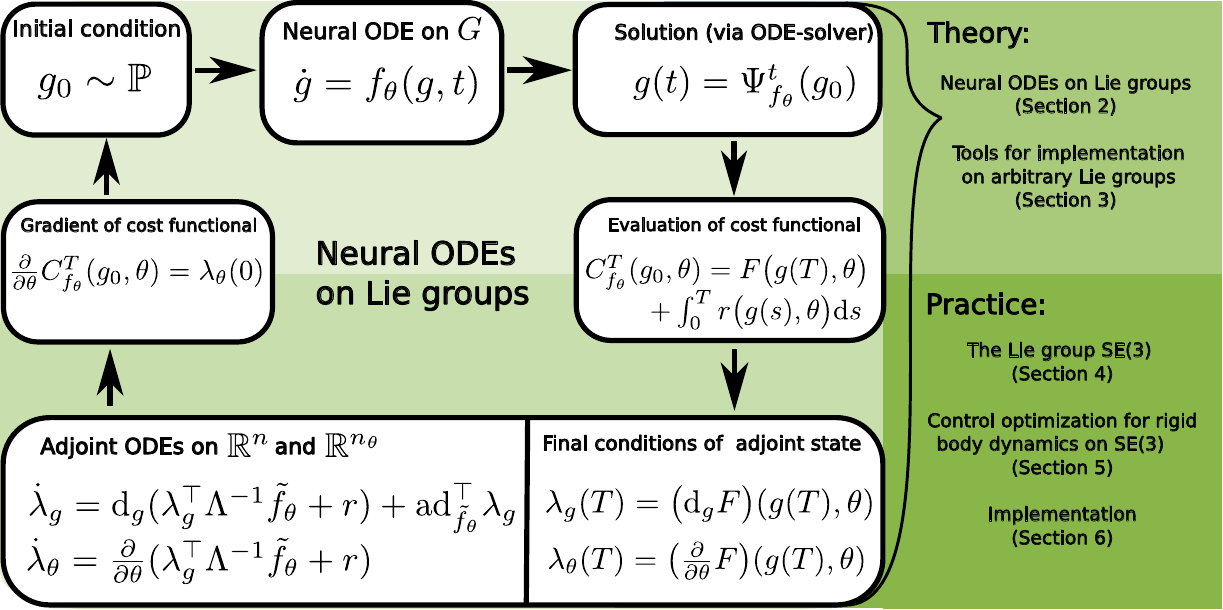}
    \caption{Overview of the main contribution and structure of the article. Given a parameterized dynamical system on a Lie group, the generalized adjoint method on Lie groups lets us compute the parameter gradient of a cost-functional over system trajectories by solving a set of differential equations. This parameter gradient can then be used to iteratively update parameters by gradient descent. In practice, we sample multiple initial conditions and approximate the parameter gradient of the expected cost $C^T_{f_\theta}(\theta) := \mathbb{E}_{g_0\sim\mathbb{P}} C_{f_\theta}^T(g_0,\theta)$.}
    \label{fig:overview}
\end{figure*}

Many physical systems are naturally described by the action of Lie groups on their configuration manifolds. This can range from finite-dimensional systems such as rigid bodies, where poses are acted on by the special Euclidean group $SE(3)$ \citep{Murray1994}, towards infinite-dimensional systems such as flexible bodies or fluid dynamical systems, where the diffeomorphism group acts on the configuration of the continuum \citep{Schmid2010}. 

Geometric control systems on Lie groups \citep{Brockett1973,Jurdjevic1996} exploit the Lie group structure of the underlying physical systems to provide numerical advantages \citep{Marsden1999}.
For example, \textit{PD} controllers for rigid bodies were defined on $SO(3)$ and $SE(3)$ by \cite{bullo_murray_1995}, and more recently geometric controllers were applied in the context of UAV's \citep{Lee2010,Goodarzi2013,Rashad2019}. 
Examples for efficient optimal control formulations on Lie groups include linear \citep{Ayala2021} and nonlinear systems \citep{Spindler1998}, as well as efficient numerical optimization methods \citep{Kobilarov2011,Saccon2013,Teng2022}.

In an orthogonal development over the recent years, there has been a surge of machine learning applications in control \citep{Dev_ML_Control_Review_2021} and robotics \citep{Taylor2021,Ibarz2021,Soori_AI_Robotics_Review_2023}. This surge is driven by the need for controllers that work in high-dimensional robotic systems and approximate complex decision policies that require the use of data. The implementation of such controllers through classical control theoretic approaches is prohibitive, and it led to a paradigm shift towards data-driven control \citep{Taylor2021}. Examples of machine learning within high-dimensional systems extend to soft robotics \citep{Kim_ML_Soft_Robotics_Review_2021} and control of fluid systems \citep{paris_RL_flow_control_2021}. The literature also aims to address common concerns of safety \citep{Hewing2020,Brunke2022} both during the training process and in the deployment of systems with machine learning in the loop.

The so-called Erlangen program of machine learning by \cite{Bronstein2021} stresses the importance of geometric machine learning methods: symmetries of data sets can restrict the complexity of functions that are to be learned on them, and thus increase the numerical efficiency of learning frameworks. This rationale also led to extensions of machine learning approaches to Lie groups \citep{Fanzhang2019,Lu2020}, with recent applications by \cite{Huang2017,Chen2021,Forestano2023}. 
 
Indeed, the fundamental symmetry groups in robotics are naturally represented by Lie groups \citep{Marsden1999}. As such, Lie group-based learning methods are of interest to the robotics community. In an excellent example of a control application \cite{Duong2021_nODE_SE3} extended neural ODEs to $SE(3)$ and applied it to the adaptive control of a UAV in  \cite{Duong2021_Adaptive_control_via_nODE_SE3}. In their recent work \cite{duong2024porthamiltonian} also highlight the practical use of neural ODEs on Lie groups. 

However, a general approach for geometric machine learning in the context of dynamic systems on Lie groups is missing. We believe that such an approach would be of high interest, especially for control applications. In this paper, we address this issue by formalizing neural ODEs on Lie groups.

Our contributions are 
\begin{enumerate}
    \item the formulation of neural ODEs on any finite-dimensional Lie group, with a particular focus on matrix Lie groups;
    \item computational simplifications with respect to manifold neural ODEs through use of a compact equation to compute gradients on Lie groups, and reduced dimension with respect to non-intrinsic approaches on Lie groups
    \item a \texttt{pytorch} \& \texttt{torchdyn} compatible algorithm for the optimization of a general potential energy shaping and damping injection control on $SE(3)$, for which stability is implemented as a design requirement; available at \href{https://github.com/YPWotte/Lie_nODEs}{\texttt{github.com/YPWotte/Lie\_nODEs}}.
    \item the formulation of a minimal exponential Atlas on the Lie group $SE(3)$.
\end{enumerate}
 
\sloppy
The article is divided into two parts (see also Figure \ref{fig:overview}): first the formulation of neural ODEs on finite-dimensional matrix Lie groups, and second an extensive example of optimal potential energy shaping on $SE(3)$. 

Section \ref{sec:main_result} presents the main technical contribution of the article: the generalized adjoint method on Lie groups, which is at the heart of a gradient descent algorithm for dynamics optimization via neural ODEs on Lie groups. 

A number of technical tools are required to apply this algorithm on a given matrix Lie group, which are introduced in Section \ref{sec:technical_tools}. The exponential Atlas allows to implement a numerical procedure for exact integration on Lie groups, while a compact formula for the gradient of a function on a Lie group reduces complexity of the gradient computation.

Section \ref{sec:kin} presents the Lie groups $SO(3)$ and $SE(3)$, and gives concrete examples of the technical tools presented in the previous section. One aspect of this is the formulation of a minimal exponential Atlas on $SE(3)$, which is used to formulate an integration procedure on $SE(3)$. This treatment prepares the stage for control optimization of a rigid body on $SE(3)$. 

Section \ref{sec:opt} introduces the example of optimizing potential energy and damping injection controllers for rigid bodies on $SE(3)$. The class of controllers is defined and it is shown that it guarantees stability by design. Afterwards, the optimization of a cost-functional over the defined class of controllers is derived from the general procedure in Section \ref{sec:main_result}.

Finally, Section \ref{sec:train} provides two examples of optimizing controllers for a rigid body on $SE(3)$. The first example concerns pose control, without gravity, and results are compared to a quadratic controller of the type presented by \cite{Rashad2019}. 
In the second example, the controller's performance is investigated in the presence of gravity. 

The article ends with a discussion in Section \ref{sec:discussion} and a conclusion in Section \ref{sec:conclusion}. 

\subsection{Neural ODEs and relation to existing works}
Neural ODEs were first introduced by \cite{chen2019neural}, who derived them as the continuous limit of recurrent neural nets, taking inputs on $\mathbb{R}^n$. Their cost functionals only admitted intermediate and final cost terms, for which they showed that the so-called adjoint method allows a memory-efficient computation of the gradient.

\cite{Massaroli2021} introduced a more general framework of neural ODEs, showing the power of state-augmentation and connections to optimal control, while also showing that the cost functional can include integral cost terms. To this end, they presented the \textit{generalized} adjoint method.

There are two highly relevant examples in the recent literature that extend neural ODEs to manifolds. The so-called extrinsic picture is presented by \cite{Falorsi2020}, who show that neural ODEs on a manifold $\mathcal{M}$ can be optimized as classical neural ODEs on an embedding $\mathbb{R}^n$. Given an extension of the manifold neural ODE to $\mathbb{R}^n$, they show that the adjoint method on $\mathbb{R}^n$ can be applied for optimization of the manifold neural ODE.

An intrinsic picture is presented by \cite{Lou2020}, who show that neural ODEs on a manifold $\mathcal{M}$ can be expressed in local charts on the manifold, where the adjoint method holds locally. They use exponential charts on Riemannian manifolds, and achieve a dimensionality-reduction and geometric exactness with respect to Falorsi et al. Both \cite{Falorsi2020} and \cite{Lou2020} carefully extend neural ODEs 
to manifolds, and consider neural ODE on Lie groups to be a sub-class of the presented manifold neural ODEs. With respect to their work, we show how to include integral costs in a \textit{generalized} adjoint method on manifolds and Lie groups, and show the advantages of considering neural ODEs on Lie groups as a specialized class of algorithms. 

An example of neural ODEs to control of robotic systems described on $\mathbb{R}^n$ is described in \cite{Massaroli2020}, where an IDA-PBC controller is optimized.

\cite{Duong2021_Adaptive_control_via_nODE_SE3,Duong2021_nODE_SE3} apply neural ODEs to control optimization for a rigid body on $SE(3)$. The work focuses on the formulation of an IDA-PBC controller, uses it for dynamics learning and trajectory tracking, and uses neural ODEs as a tool for this optimization.
While the integration procedure used is not geometrically exact and the Lie group constraints are violated, the approach is highly successful. However, Duong et al. do not connect their contribution to geometric machine learning literature such as neural ODEs on manifolds. In recent work \cite{duong2024porthamiltonian}, steps are made to extend the extrinsic approach to general matrix Lie groups, however without making full use of the geometric structure given by Lie groups.

With respect to \cite{duong2024porthamiltonian}, we present neural ODEs on arbitrary finite dimensional Lie groups. By extending the intrinsic formulation to Lie groups, our example on $SE(3)$ has a reduced number of dimensions (24 instead of 36), and the use of local charts allows geometrically exact integration.


\subsection{Notation}\label{ssec:notation} 
While the main results are accessible with a background of linear algebra and vector calculus, the derivations heavily rely on differential geometry and Lie group theory, see e.g, \cite{Isham1999} and \cite{Hall2015} for a complete introduction, or \cite{sola2021micro} for a brief introduction with examples in robotics.

Calligraphic letters $\mathcal{M},\,\mathcal{N},\,\mathcal{U},\,\mathcal{P}$ denote smooth manifolds.
Respectively, $T_x\mathcal{M}$ and $T_x^*\mathcal{M}$ denote the tangent and cotangent space at $x \in \mathcal{M}$,  $T\mathcal{M} $ and $T^*\mathcal{M}$ denote the tangent bundle and cotangent bundle of $\mathcal{M}$, and $\Gamma(T\mathcal{M})$ and $\Gamma(T^*\mathcal{M})$ are the sets of sections that collect vector fields and co-vector fields over $\mathcal{M}$. 
Curves $x:\mathbb{R}\rightarrow \mathcal{M}$ are evaluated as $x(t)$, and their tangent vectors are denoted as $\dot{x} \in T_{x(t)} \mathcal{M}$. 

Upper case letters $G,H$ denote Lie groups, while lower case letters $g,h$ denote their elements. A lower case $e$ denotes the group identity $e\in G$, an upper case $I$ denotes the identity matrix. The Lie algebra is $\mathfrak{g}$, and its dual is $\mathfrak{g}^*$. Letters $\Tilde{A},\Tilde{B}$ denote vectors in the Lie algebra, while letters $A,B$ denote vectors in $\mathbb{R}^n$. 

%
Furthermore $C^k(\mathcal{M},\mathcal{N})$ denotes the set of continuous, $k$-times differentiable functions between $\mathcal{M}$ and $\mathcal{N}$.  
For $\phi \in C^k(\mathcal{M},\mathcal{N})$, let $\phi_*: \Gamma(T\mathcal{M}) \rightarrow \Gamma(T\mathcal{N})$ and $\phi^*: \Gamma(T^*\mathcal{N}) \rightarrow \Gamma(T^*\mathcal{M})$ denote the push-forward and pullback, respectively. 

For $V\in C^1(\mathcal{M},\mathbb{R})$, let $\text{d}V \in \Gamma(T^*\mathcal{M})$ denote the gradient co-vector field. When $\mathcal{M} = \mathbb{R}^k$, the gradient at $x\in \mathbb{R}^k$ is denoted by $\frac{\partial V}{\partial x} \in \mathbb{R}^k$. 

When coordinate expressions are concerned, the Einstein summation convention is used, i.e., the product of variables with lower and upper indices implies a sum $a_i b^i := \sum_i a_i b^i$.

Let $(X,D,\mathbb{P})$ denote a probability space with $X$ a topological space, $D$ the Borel $\sigma$-algebra and $\mathbb{P}:D\rightarrow[0,1]$ 
a probability measure. 
Given a vector space $L$ and a random variable $C:X\rightarrow L$, denote by $\mathbb{E}_{x\sim\mathbb{P}}(C) := \int_X C(x) \text{d}\mathbb{P}(x)$ 
the expectation of $C$ w.r.t. $\mathbb{P}$. 

\section{Main Result}\label{sec:main_result}

After a brief introduction to Lie groups in Section \ref{ssec:main_result/lie_groups}, the optimization problem is introduced on abstract Lie groups in Section \ref{ssec:main_result/optimization_problem}. A gradient descent optimization algorithm is presented in Section \ref{ssec:main_result/main_result}. Our main technical result, the generalized adjoint method on Lie groups, lies at the core of the gradient computation. For the sake of exposition, we present it in the context of matrix Lie groups, and relegate the derivations and the formulation on abstract Lie groups to Appendix \ref{app:A}.

\subsection{Lie groups}\label{ssec:main_result/lie_groups}
A finite-dimensional Lie group $G$ is an $n$-dimensional manifold together with a group structure, such that the group operation is a smooth map on $G$ \citep{Isham1999}. $G$ is a real matrix Lie group if it is a subgroup of the general linear group $GL(m,\mathbb{R})$\endnote{A more general definition of a matrix lie group $G$ allows for complex matrix Lie groups $G\subset GL(m,\mathbb{C})$ or quaternionic matrix Lie groups $G\subset GL(m,\mathbb{H})$. The results in our article immediately extend to such scenarios: a choice of basis for the Lie algebras $gl(m,\mathbb{C}),\, gl(m,\mathbb{H})$ leads to $\Lambda:\mathbb{R}^n \rightarrow \mathfrak{g}$ in Equation \eqref{eq:abstract_tilde}, such that relevant quantities like the adjoint map in \eqref{eq:algebra_adjoint}, the adjoint state and its dynamics in Theorem \ref{thm:adjoint_method_matrix_lie_group} may again be expressed as real valued vectors and matrices. }  
\begin{equation}
    GL(m,\mathbb{R}) := \{g \in \mathbb{R}^{m\times m} \; | \; \text{det}(g) \neq 0 \}\,,
\end{equation}
where the group operation for a matrix Lie group is given by matrix multiplication \citep{Hall2015}. For $g,h \in G$ the left translation by $h$ is defined as
\begin{equation}
    L_h:G\rightarrow G\,; \; L_h(g) := hg\,.
\end{equation}

We denote the Lie algebra of $G$ as $\mathfrak{g} := T_e G$, and its dual as $\mathfrak{g}^* := T^*_e G$. 

Define a basis $E := \{\Tilde{E}_1,\ldots,\Tilde{E}_n\}$ with $\Tilde{E}_i \in \mathfrak{g}$, and define the (invertible, linear) map $\Lambda:\mathbb{R}^n\rightarrow \mathfrak{g}$ as\endnote{When directly working with matrix Lie groups (e.g, \cite{sola2021micro}) $\Lambda$ and $\Lambda^{-1}$ are often denoted as the so-called ``hat'' $\wedge: \mathbb{R}^n\rightarrow\mathbb{R}^{m\times m}$ and ``vee'' $\vee:\mathbb{R}^{m\times m} \rightarrow \mathbb{R}^n$ operators, respectively.}

\begin{equation} \label{eq:abstract_tilde}
    \Lambda:\mathbb{R}^n \rightarrow \mathfrak{g} \,
; \; (x^1,\ldots,x^n)\mapsto \sum_i x^i \Tilde{E}_i \,.
\end{equation}

The dual of $\Lambda$ is the map $\Lambda^*:\mathfrak{g}^*\rightarrow \mathbb{R}^n$. 
%
Define the dual basis $\{\bar{E}^1,\ldots,\bar{E}^n\}$ with $\bar{E}^i \in \mathfrak{g}^*$ by $\bar{E}^i(\tilde{E}_j) = \delta_j^i$ with $\delta_j^i$ the Kronecker delta. Then ${\Lambda^*}^{-1}$ is explicitly given by
\begin{equation}
    {\Lambda^*}^{-1}:\mathbb{R}^n \rightarrow \mathfrak{g}^* \,
; \; (x_1,\ldots,x_n)\mapsto \sum_i x_i \bar{E}^i \,.
\end{equation}

For a matrix Lie group the Lie algebra $\mathfrak{g}$ is a subspace of the Lie algebra $gl(m,\mathbb{R})$ of $GL(m,\mathbb{R})$. Here $gl(m,\mathbb{R})$ is defined as 
\begin{equation}
        gl(m,\mathbb{R}) := \mathbb{R}^{m\times m}\,.
\end{equation}

For $\Tilde{A},\Tilde{B} \in \mathfrak{g}$ the adjoint map $\text{ad}_{\Tilde{A}}(\Tilde{B})$ is a bilinear map defined in terms of the (left) Lie bracket
\begin{equation}\label{eq:algebra_adjoint} 
    \text{ad}:\mathfrak{g}\times\mathfrak{g}\rightarrow\mathfrak{g}\,;\; \text{ad}_{\tilde{A}}(\tilde{B}) = \Tilde{A}\Tilde{B} - \Tilde{B}\Tilde{A}\,.
\end{equation} 

Using the operator $\Lambda$, a matrix representation of $\text{ad}$ is obtained as $\Lambda^{-1}\big(\text{ad}_{\Lambda(A)}\Lambda(\cdot)\big)\in\mathbb{R}^{n\times n}$, called the adjoint representation. By an abuse of notation, we denote the adjoint representation as $\text{ad}_{A}$, without a tilde in the subscript.

On matrix Lie groups and for functions $V \in C^1(G,\mathbb{R})$ the gradient $\text{d}_g V \in \mathbb{R}^n$ (see Section \ref{ssec:technical_tools:gradients} for details) is found as:
\begin{equation}
    \text{d}_g V = \frac{\partial}{\partial q} V\bigg(g\big(I+\Lambda(q)\big)\bigg)_{|q = 0}\,.
\end{equation}


\subsection{Optimization problem} \label{ssec:main_result/optimization_problem}

We consider a variant of the optimal control problem on a Lie group \citep{Jurdjevic1996} with a finite horizon $T$. Given parameters $\theta \in \mathbb{R}^{n_\theta}$, denote the parameterized dynamics on a Lie group as $f_\theta(g,t) := f(g,t,\theta)$. Then, given the dynamic system 
\begin{equation}\label{eq:dyn_Lie}
    \dot{g} = f_\theta(g,t)\,, \; g(0) = g_0 \,,
\end{equation}
denote the solution operator (also called the flow) as 
\begin{equation}\label{eq:flow}
    \Psi^t_{f_\theta}:G \rightarrow G\, ; \; g(0) \mapsto g(t) \,,
\end{equation}
and define the real valued cost function
\begin{equation}\label{eq:single_trajectory_cost}
    C^T_{f_\theta}(g_0,\theta) = F(\Psi^T_{f_\theta}(g_0),\theta) + \int_0^T r(\Psi^s_{f_\theta}(g_0),\theta,s) \text{d}s \,,
\end{equation}
where we call $F$ 
the final cost term and $r$ 
the running cost term.

Indicating a probability space $(G,D,\mathbb{P})$, we are interested in solving the minimization problem
\begin{align} \label{eq:abstract_optimization_problem}
    \min_\theta \; & \mathbb{E}_{g_0 \sim \mathbb{P}} C^T_{f_\theta}(g_0,\theta)\,.
\end{align}

\begin{remark}
    The chief reason for our interest in this optimization problem is that it includes, as a subclass, the optimization of state-feedbacks $u_\theta:G\times [0,T] \rightarrow\mathcal{U}$ by considering dynamics of the form $f(g,t,u_\theta(g,t))$, where $u_\theta$ denotes the control input of the system. 
\end{remark}

\begin{remark} 
    The dynamics $f_\theta(g,t)$ can also be parameterized with neural nets, in which case $f_\theta(g,t)$ is referred to as a neural ODE on a Lie group. Indeed, for the Lie group $(\mathbb{R}^n,+)$, the formulation agrees with the definition of a neural ODE given in \cite{Massaroli2021} who define them as dynamics $\dot{x} = f_\theta(x,t)$ with $x \in \mathbb{R}^n$.
\end{remark}

\subsection{Optimization algorithm}  \label{ssec:main_result/main_result}

We use a stochastic gradient descent optimization algorithm \citep{Robbins1951} to approximate a solution to the optimization problem \eqref{eq:abstract_optimization_problem} on a matrix Lie group. 

Denote the total cost in \eqref{eq:abstract_optimization_problem} as
\begin{equation}
    J(\theta) := \mathbb{E}_{g_0 \sim \mathbb{P}} C^T_{f_\theta}(g_0,\theta) \,.
\end{equation}

Additionally, denote by $\theta_k \in \mathbb{R}^{n_\theta}$ the parameters at the $k$-th iteration, and by $\eta_k$ a positive scalar learning rate at the $k$-th iteration. Then a \textit{standard} gradient descent algorithm computes the parameters $\theta_{k+1}$ by an application of the update rule
\begin{equation} \label{eq:gradient_descent}
    \theta_{k+1} = \theta_k - \eta_k \frac{\partial}{\partial\theta} J(\theta) \,. 
\end{equation}
In \textit{stochastic} gradient descent $N$ initial conditions $g_i$ are sampled from the probability distribution corresponding to the probability measure $\mathbb{P}$. The expectation in \eqref{eq:gradient_descent} is approximated by averaging the gradients of costs $C_i = C^T_{f_\theta}(g_i,\theta)$ of the individual trajectories starting at $g_i$ as 
\begin{equation}
    \frac{\partial}{\partial\theta} J(\theta) = \mathbb{E}_{g_0 \sim \mathbb{P}} \frac{\partial}{\partial\theta} C^T_{f_\theta}(g_0) \approx \frac{1}{N} \sum_{i=0}^N \frac{\partial}{\partial\theta} C_i \,.
\end{equation}
For convex cost-functions $J(\theta)$ and a sufficiently small $\eta_k$, the parameter $\theta_k$ approaches the optimal parameters $\theta^\star$ as $k$ increases \citep{Robbins1951}. For non-convex cost-functions stochastic gradient descent does not have a guarantee of global optimality, but it is still widely used as a light and scalable algorithm \citep{ruder2017overview} that results in robust local optima \citep{xie2021diffusion}. 

In order to compute the gradient $\frac{\partial}{\partial\theta} C_i$ of the cost for a single trajectory \eqref{eq:single_trajectory_cost}, we derived the generalized adjoint method on matrix Lie groups. It is the main technical result of this paper, and it is stated in the following:  

\begin{theorem}[Generalized Adjoint Method on Matrix Lie Groups]\label{thm:adjoint_method_matrix_lie_group}
    Given are the dynamics \eqref{eq:dyn_Lie} 
    and the cost 
    \eqref{eq:single_trajectory_cost}. 
    Denote by $\tilde{f}_\theta(g,t) := g^{-1} f_\theta(g,t) \in \mathfrak{g}$. Then the parameter gradient $\frac{\partial}{\partial\theta} C^T_{f_\theta}(g_0)$ of the cost is given by the integral equation
    \begin{equation} \label{eq:parameter_gradient_matrix_lie_group}
        \frac{\partial}{\partial\theta} C^T_{f_\theta}(g_0) = \frac{\partial F}{\partial \theta} + \int_0^T\frac{\partial}{\partial \theta} \big(\lambda_{g}^\top \Lambda^{-1}(\tilde{f}_\theta)\big) + \frac{\partial r}{\partial \theta} \text{d}t\,,
    \end{equation}
    where the state $g(t) \in G$ and adjoint state $\lambda_g(t) \in \mathbb{R}^n$ are the solutions of the system of equations
    \begin{align}
        \dot{g} &= f_\theta\,, \; g(0) = g_0 \,, \label{eq:Mat_Lie_Adjoint_Sensitivity} \\
        \dot{\lambda}_{g} &= - \text{d}_g\big(\lambda_g^\top\Lambda^{-1}(\tilde{f}_\theta)+r\big) 
        + \text{ad}_{\tilde{f}_\theta}^\top \lambda_g \,, \; \lambda_{g}(T) = \text{d}_g F\label{eq:Mat_Lie_Adjoint_Sensitivity_Co}\,.
    \end{align}
\end{theorem}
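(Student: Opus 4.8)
The plan is to prove Theorem~\ref{thm:adjoint_method_matrix_lie_group} by the classical Lagrange-multiplier (method of adjoints) argument, adapted to the group setting through \emph{left trivialization}, so that every quantity lives either in $\mathbb{R}^n$ or in $\mathfrak{g}$ and the dynamics constraint becomes a vector-valued identity. Concretely, I would introduce the augmented cost
\begin{equation*}
\mathcal{L}(\theta) = F(g(T),\theta) + \int_0^T r(g,\theta,t)\,\text{d}t + \int_0^T \lambda_g^\top \Lambda^{-1}\!\big(\tilde{f}_\theta - g^{-1}\dot{g}\big)\,\text{d}t,
\end{equation*}
where $\lambda_g(t)\in\mathbb{R}^n$ is an as-yet-unspecified multiplier. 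Because $g(t)$ solves \eqref{eq:dyn_Lie}, the left-trivialized velocity satisfies $g^{-1}\dot{g}=\tilde{f}_\theta$ identically, so the last integral vanishes and $\mathcal{L}(\theta)=C^T_{f_\theta}(g_0,\theta)$ for every choice of $\lambda_g$; hence $\frac{\partial}{\partial\theta}\mathcal{L}=\frac{\partial}{\partial\theta}C^T_{f_\theta}$ no matter how $\lambda_g$ is later fixed. This freedom in $\lambda_g$ is exactly what will let me eliminate the unknown state sensitivity.

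Next I would encode the state sensitivity intrinsically. Writing $z:=\Lambda^{-1}(g^{-1}\,\partial g/\partial\theta)\in\mathbb{R}^n$ for the left-trivialized sensitivity, two facts drive the computation. First, by the definition of the group gradient $\text{d}_g V$ given in Section~\ref{ssec:main_result/lie_groups}, every scalar $V\in C^1(G,\mathbb{R})$ obeys $\frac{\partial}{\partial\theta}V(g)=(\text{d}_g V)^\top z$; this turns the $\theta$-derivatives of $F$, $r$ and the scalar $\lambda_g^\top\Lambda^{-1}(\tilde{f}_\theta)$ into contractions with $z$ plus their explicit partials. Second, and this is the genuinely group-theoretic step, I would establish the variational identity
\begin{equation*}
\frac{\partial}{\partial\theta}\,\Lambda^{-1}\!\big(g^{-1}\dot{g}\big) = \dot{z} + \text{ad}_{\tilde{f}_\theta}\, z,
\end{equation*}
obtained by differentiating $g^{-1}\dot{g}$, using $\partial(g^{-1})/\partial\theta=-g^{-1}(\partial g/\partial\theta)g^{-1}$, commuting $\partial_\theta$ with $\partial_t$, and recognizing the resulting commutator $[g^{-1}\dot g,\,g^{-1}\partial_\theta g]$ as $\text{ad}_{\tilde{f}_\theta}$ acting on $z$ via \eqref{eq:algebra_adjoint} and the adjoint representation. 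One also records $z(0)=0$, since $g(0)=g_0$ does not depend on $\theta$.

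The remaining steps are bookkeeping. I would differentiate $\mathcal{L}$ under the integral sign, substitute the two identities, and integrate the term $-\lambda_g^\top\dot z$ by parts, using $z(0)=0$ to kill the lower boundary term and producing $+\dot\lambda_g^\top z$ together with a boundary contribution $-\lambda_g(T)^\top z(T)$. Collecting all factors of $z(T)$ and of $z(t)$ and demanding that they vanish — which is permissible precisely because $\lambda_g$ is free — forces the terminal condition $\lambda_g(T)=\text{d}_g F$ and the adjoint dynamics \eqref{eq:Mat_Lie_Adjoint_Sensitivity_Co}, the transpose $\text{ad}^\top_{\tilde{f}_\theta}$ arising from moving $\text{ad}_{\tilde{f}_\theta}$ off $z$ and onto $\lambda_g$. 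What then survives is exactly the parameter gradient \eqref{eq:parameter_gradient_matrix_lie_group}.

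I expect the main obstacle to be the variational identity of the second paragraph: correctly handling the noncommutativity of $G$ so that the adjoint term appears with the right sign, justifying the interchange of the $\theta$- and $t$-derivatives (smoothness of the flow $\Psi^t_{f_\theta}$ in both arguments), and differentiating under the integral sign. A secondary subtlety is fixing the sign convention in the pairing; writing the constraint as $\tilde{f}_\theta-g^{-1}\dot{g}$ rather than its negative is what makes the terminal condition emerge as $+\text{d}_g F$ and the parameter integrand as $+\frac{\partial}{\partial\theta}\big(\lambda_g^\top\Lambda^{-1}(\tilde{f}_\theta)\big)$, matching the statement. Everything else is the standard Euclidean adjoint argument transported to the Lie algebra.
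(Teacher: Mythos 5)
Your proposal is correct, and the signs all check out against the paper's conventions (with $\mathrm{ad}_{\tilde A}\tilde B=\tilde A\tilde B-\tilde B\tilde A$ and left trivialization $\tilde f_\theta=g^{-1}f_\theta$, the commutator you isolate is indeed $+\mathrm{ad}_{\tilde f_\theta}z$, the integration by parts with $z(0)=0$ forces $\lambda_g(T)=\mathrm{d}_gF$, and eliminating the coefficient of $z$ reproduces \eqref{eq:Mat_Lie_Adjoint_Sensitivity_Co} and leaves exactly \eqref{eq:parameter_gradient_matrix_lie_group}) — but your route is genuinely different from the paper's. The paper (Appendix~\ref{app:A}) never writes an augmented Lagrangian with a multiplier enforcing the dynamics; instead it first proves the adjoint sensitivity theorem on a general manifold by defining $\lambda(t):=(\Psi_f^{T-t})^*\lambda_T$, observing $\mathcal{L}_f\lambda=0$, and extracting the co-state ODE from Cartan's formula in a chart; it then absorbs the running cost and the parameters by \emph{state augmentation} on $\mathcal{M}\times\mathcal{P}\times\mathbb{R}\times[0,T]$, recasts the result as the cotangent-lifted Hamiltonian flow of the control Hamiltonian $H_c=\lambda(f_\theta)+r$, and finally transports this Hamiltonian system to the Lie group via the identification $T^*G\equiv G\times\mathfrak{g}^*$ and the left-trivialized Hamilton equations \eqref{eq:Lie_Ham}, \eqref{eq:Lie_Ham_Co}, whose $\mathrm{ad}^*$ term is the source of $\mathrm{ad}^\top_{\tilde f_\theta}\lambda_g$. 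Your argument instead imports the noncommutativity through the single variational identity $\partial_\theta\Lambda^{-1}(g^{-1}\dot g)=\dot z+\mathrm{ad}_{\tilde f_\theta}z$ — the same identity that underlies Euler--Poincar\'e variational calculus — and then runs the standard Euclidean multiplier computation. What your approach buys is brevity and self-containedness: only matrix calculus, one commutator identity, and integration by parts, with the terminal condition and the $\mathrm{ad}^\top$ term emerging mechanically; its cost is that it is tied to (left-trivializable) matrix groups and leans on the interchange $\partial_\theta\partial_t=\partial_t\partial_\theta$ and differentiation under the integral, which you rightly flag as the points needing smoothness of the flow. What the paper's longer route buys is reusable intermediate structure: the flow-pullback adjoint sensitivity theorem, its Hamiltonian (cotangent-lift) interpretation, and the abstract Lie group statement with $\mathrm{ad}^*$, which is what lets the paper claim validity on arbitrary finite-dimensional Lie groups and accommodate more general costs than \eqref{eq:single_trajectory_cost}. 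One point to make explicit if you write this up: the chain rule $\partial_\theta V(g)=(\mathrm{d}_gV)^\top z$ should be justified from $\partial_\theta g=g\Lambda(z)$ and the definition \eqref{eq:grad_G}, and the multiplier must be taken $\theta$-independent (or one must note that its $\theta$-variation multiplies the identically vanishing constraint), as you implicitly do.
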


\begin{proof}
    For a derivation of Theorem \ref{thm:adjoint_method_matrix_lie_group} via neural ODEs on manifolds, refer to Appendix \ref{app:A}.
\end{proof}

The generalized adjoint method \citep{Massaroli2021} on $\mathbb{R}^n$ is recovered as a special case for the Lie group $(\mathbb{R}^n,+)$, for which the adjoint term $\text{ad}_T = 0$ such that equation \eqref{eq:Mat_Lie_Adjoint_Sensitivity_Co} agrees with the adjoint equation on $\mathbb{R}^n$. 

Just as the generalized adjoint method on $\mathbb{R}^n$, the generalized adjoint method on Lie groups has a constant memory efficiency with respect to the network depth $T$. This makes it an advantageous choice for the gradient computation compared to e.g., back-propagation through an ODE solver.

Various technical tools are required to apply Theorem \ref{thm:adjoint_method_matrix_lie_group} in practice. This includes the exponential Atlas for exact integration of $\dot{g}$, a tractable expression of the gradient operator $\text{d}_g:C^1(G,\mathbb{R}) \rightarrow \mathbb{R}^n$, and the composition of matrix Lie groups to create new matrix Lie groups from old ones. These tools are the subject of Section \ref{sec:technical_tools}.

\begin{remark}
    Equations \eqref{eq:Mat_Lie_Adjoint_Sensitivity} and \eqref{eq:Mat_Lie_Adjoint_Sensitivity_Co} are solved by integrating \eqref{eq:Mat_Lie_Adjoint_Sensitivity} forward in time, computing $\text{d}_g F$ at $g = g(T)$, and integrating \eqref{eq:Mat_Lie_Adjoint_Sensitivity_Co} backwards in time, reusing $g(t)$ from the forward integration. Equation \eqref{eq:parameter_gradient_matrix_lie_group} is solved by integrating its differential alongside Equation \eqref{eq:Mat_Lie_Adjoint_Sensitivity_Co}. See especially Figure \ref{fig:overview}. The memory efficiency of neural ODEs stems from the fact that trajectories $g(t)$ and $\lambda_g(t)$ do not need to be stored, apart from a few way-points of $g(t)$, and that the dependency of $g(t)$ on parameters is largely ignored in the forward pass - this avoids overheads that arise e.g., through automatic differentiation over an ODE solver.
\end{remark}

\begin{remark}
    The choice of group action for $(\mathbb{R}^n,\bullet)$ plays an important role in recovering the adjoint equation on $\mathbb{R}^n$. This choice is a nontrivial degree of freedom of the optimization (see also Remark \ref{remark:group_action_se*3}).
\end{remark}

\section{Technical Tools}\label{sec:technical_tools}

A number of technical tools are presented in the context of matrix Lie groups. Given mild adaptations of the definitions these tools also apply to abstract finite-dimensional Lie groups (see Appendix \ref{app:nodes_on_manifolds}). 


\subsection{Atlas and minimal Atlas on Lie groups}\label{ssec:technical_tools:atlas}

\begin{figure}
 \centering
 \includegraphics[height=23em]{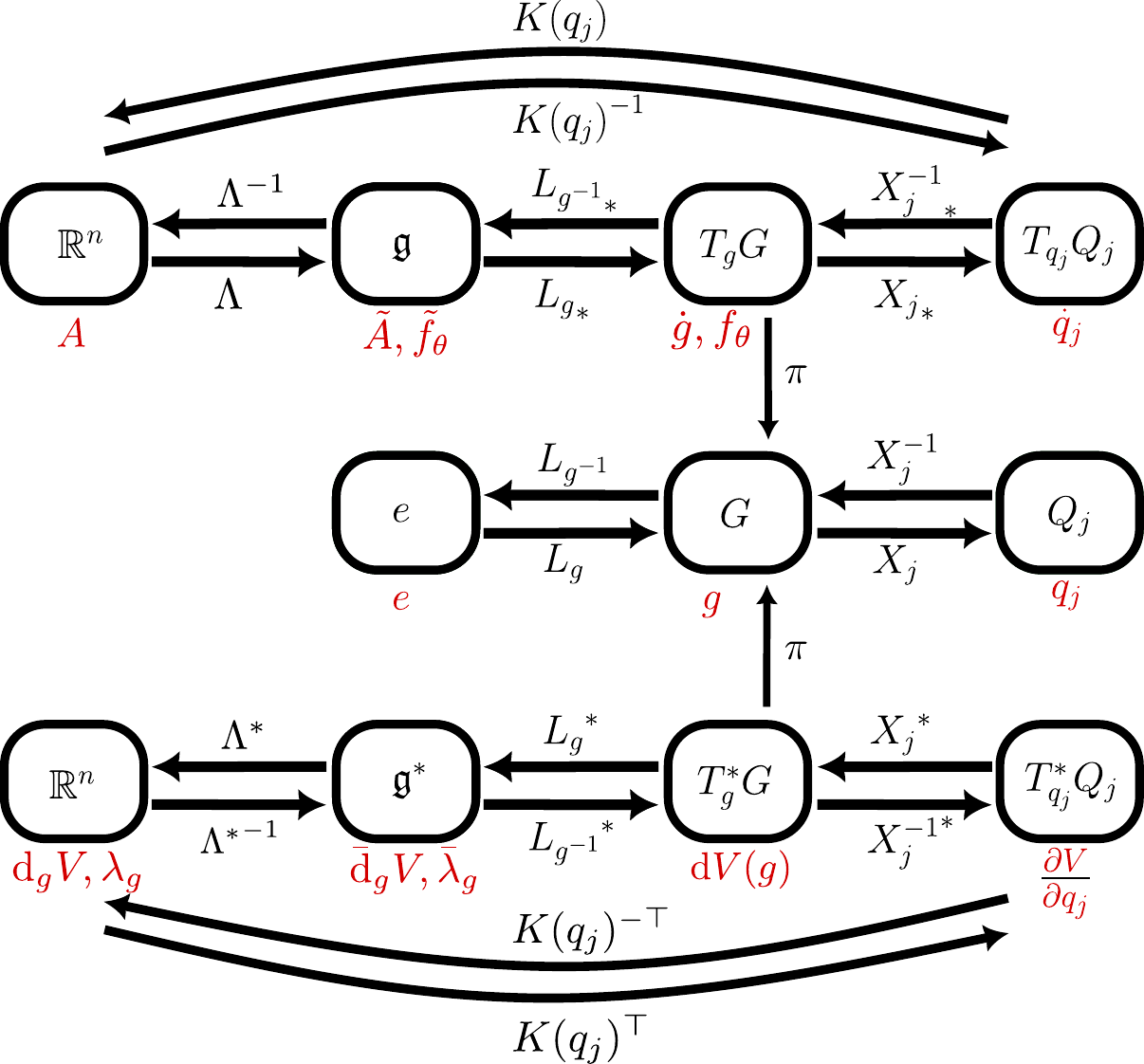}
 \caption{Commutative diagram of a generic Lie group $G$. 
 Boxes represent sets, while arrows represent functions between sets. Relevant variables in a given set are indicated in red.}\label{fig:CommDiag_G}
\end{figure}

In this section the exponential map and logarithmic maps will be used to construct an atlas of \textit{exponential} charts for finite-dimensional Lie groups, and the concept of a minimal exponential atlas will be defined. Here an atlas is defined as follows:

\begin{definition}[Atlas and Charts]
An \textbf{atlas} $\mathcal{A}$ for an $n$-dimensional smooth manifold $\mathcal{M}$ is a collection of \textbf{charts} $(U,X)$, where $U \subseteq \mathcal{M}$ is an open set, $X:U\rightarrow\mathbb{R}^n$ is a diffeomorphism called a \textbf{chart map}, and the chart domains satisfy $\bigcup_{(U,X)\in\mathcal{A}} U = \mathcal{M}$. 
\end{definition}

For finite-dimensional Lie groups the 
exponential map $\exp:\mathfrak{g}\rightarrow G$ is a local diffeomorphism \citep[Chapter 4.2.3]{Isham1999}. 
Its inverse $\log:U\rightarrow\mathfrak{g}$ is defined by $\exp\circ\log = \text{id}_{U}$, for a neighborhood $U$ of the identity $e\in G$, and $\text{id}_U$ is the identity map on $U$. 

For a matrix Lie group, 
the exponential map is given by the infinite sum \citep[Chapter 3.7]{Hall2015}:
\begin{equation}\label{eq:general_matrix_exp}
    \exp(\tilde{A}) := \sum_{n=0}^\infty \frac{1}{n!}\tilde{A}^n\,.
\end{equation}

Conversely, the $\log$ map for matrix Lie groups is given by the matrix logarithm, when it is well-defined \citep[Chapter 2.3]{Hall2015}: 
\begin{equation}\label{eq:general_matrix_log}
    \log(g) = \sum_{n=1}^{\infty} (-1)^{n+1} \frac{(g-I)^n}{n}\,.
\end{equation}

On a case-by-case basis the infinite sums in \eqref{eq:general_matrix_exp} and \eqref{eq:general_matrix_log} can further be reduced to a finite sum by use of the Cayley-Hamilton Theorem \citep{Visser2006}, which often allows one to find a closed-form expression of the $\exp$ and $\log$ maps.

The logarithmic map \eqref{eq:general_matrix_log} and $\Lambda$ in equation \eqref{eq:abstract_tilde} can then be used to construct a local exponential chart $(U,X)$ for $G$, where 
\begin{align}
    X:\,&U\rightarrow\mathbb{R}^n \,;
    \;
     g \mapsto \Lambda^{-1}\log(g)  \,, 
\end{align}
assigns so-called coordinates $q \in \mathbb{R}^n$ to group elements $g\in U \subseteq G$, with the zero coordinates assigned to the group identity $e$.

To create a chart ``centered" on any $h\in G$ (i.e. the zero coordinates are assigned to $h$), both the region $U$ and the chart map $X$ can be left-translated\endnote{The left-translation of a map $f:G\rightarrow\mathcal{M}$ is defined as $(L_g f) := f\circ L_{g^{-1}}$.} by $L_{h}$ to define the chart $(U_h,X_h)$ with 
\begin{align}
    U_h &= \, \{h g \; | \; g \in U  \} \,,\label{eq:one_chart_G}\\
    X_h&:\,U_h\rightarrow\mathbb{R}^n \,;\label{eq:one_chart_G_X} 
    \; g \mapsto \Lambda^{-1}\log(h^{-1}g) \,, 
    \\ 
    X_h^{-1}&:\,\mathbb{R}^n \rightarrow G \,; \label{eq:one_chart_G_X_inv} 
    \; q \mapsto h\exp\big(\Lambda({q})\big) \,. 
\end{align}

The collection $\mathcal{A}$ of charts $(U_h,X_h)$ is then called an exponential atlas. This atlas covers the Lie group $G$, and is fully determined by the choice of basis $E \subset \mathfrak{g}$ and chart region $U \subset G$. 

In order to use a finite number of charts, we are interested in constructing a minimal exponential atlas. A minimal atlas is defined as follows: 
\begin{definition}[Minimal Atlas]
    An atlas $\mathcal{A}$ is minimal if it covers the manifold, i.e. $\mathcal{M} = \bigcup_{(U,X)\in\mathcal{A}} U$, and if it does so with the minimum number of charts. 
\end{definition}

\begin{remark}  
    Given a manifold $\mathcal{M}$, the size of a minimal atlas is determined by a topological invariant, the integer 
    $\text{Cat}(\mathcal{M})$ (the Lusternik-Schnirrelmann category, see \cite{Grafarend2011,Oprea2014}). 
    Given a Lie group $G$, $\text{Cat}(G)$ provides a \textit{lower bound} on the size of a minimal \textit{exponential} atlas. 
\end{remark}

\begin{remark}
Given a minimal exponential atlas (which can be seen to be always countable) we use integers $j$ to number the relevant chart-centers as $g_j$, the corresponding charts as $(U_j,X_j)$, and denote the chart-coordinates in the $j$-th chart as $q_j \in \mathbb{R}^n$.
\end{remark}

\subsection{Vectors on Lie groups}\label{ssec:technical_tools:vectors} 

Given a curve
$g:\mathbb{R} \rightarrow G$ and its coordinate representation $q_j(t) = X_j(g(t))$ through \eqref{eq:one_chart_G_X}, one can relate the tangent-vector $\dot{q}_j \in T_{q_j} Q_j$ of $q_j\in Q_j \subseteq \mathbb{R}^n$ to an element $\Tilde{A}\in\mathfrak{g}$, and further to an element $A = \Lambda^{-1}(\Tilde{A}) \in \mathbb{R}^n$ (see Fig. \ref{fig:CommDiag_G}) by 

\begin{equation}\label{eq:G_vec_to_algebra_rep}
    A = K(q_j) \dot{q}_j\,.
\end{equation}
The map $K:\mathbb{R}^n \rightarrow \mathbb{R}^{n\times n}$ 
is called the derivative of the exponential map, and it is given by the power series \citep{rossmann2006lie}
\begin{equation}\label{eq:dexp_rep}
    K(q_j) = \sum_{k=0}^\infty \frac{(-1)^k}{(k+1)!}\text{ad}_{q_j}^k\,.
\end{equation}

Recall that $\text{ad}_{q_j}$ is an $n$-by-$n$ matrix, and $\text{ad}_{q_j}^k$ is the $k$-th power of such matrices. As with the matrix exponential \eqref{eq:general_matrix_exp}, the infinite sum can then be reduced to a finite sum by use of the Cayley-Hamilton Theorem \citep{Visser2006}.

\begin{remark} The expression \eqref{eq:G_vec_to_algebra_rep} is invariant under choice of exponential chart, i.e., for any charts $(U_j,X_j)$ and $(U_k,X_k)$ one has that $K(q_j)\dot{q}_j = K(q_k)\dot{q}_k$ (see Appendix \ref{appA:chart_invariance_K}).
\end{remark}
\medskip

\subsection{Lie group integrators}\label{ssec:technical_tools:lie_group_integrators}
We adapt the Runge-Kutta-Munthe-Kaas (RKMK) method \citep{MuntheKaas1999} for exact integration of the dynamics \eqref{eq:dyn_Lie}. The RKMK method uses the Runge-Kutta scheme on Lie groups - we instead allow for arbitrary numerical integration schemes. 
For an overview of Lie group integrators, see e.g. \cite{iserles_2000,Celledoni_2003,Celledoni_2014}.

Using equation \eqref{eq:G_vec_to_algebra_rep}, the dynamics \eqref{eq:dyn_Lie} can be represented in a local exponential chart as \citep{Celledoni_2014}
\begin{equation}\label{eq:chart_dyn_G}
    \dot{q}_j = K(q_j)^{-1} \Lambda^{-1} \tilde{f}_\theta\big( X_j^{-1}(q_j), t\big) \,, 
\end{equation}
where we denote $\tilde{f}_\theta(g,t) := g^{-1} f_\theta(g,t) \in \mathfrak{g}$.
An arbitrary numerical integration scheme can then be used to integrate the dynamics, as long as the state $q_j$ remains in the chart region $Q_j = X_j(U_j) \subseteq \mathbb{R}^n$. 

\begin{remark}
    Note that the chart-dynamics \eqref{eq:chart_dyn_G} are not necessarily well-defined for all $q_j \in \mathbb{R}^n$, since $K(q_j)$ can have singularities. Yet, the chart-dynamics are well-defined for $q_j$ in the chart region $Q_j$, where $K(q_j)$ is guaranteed to be of full rank. 
\end{remark}

To make sure the state remains within the chart region we switch charts when needed by application of
\begin{equation}
    q_i = X_i\big(X_j^{-1}(q_j)\big)\,.
\end{equation}
Here the conditions for chart switches are a degree of freedom. It is possible to always choose the chart $(X_h,U_h)$ with $h = X_j^{-1}\big(q_j(t)\big)$, i.e., to switch charts after every step of the numerical integrator as in \cite{MuntheKaas1999,Celledoni_2014}. 

Given a minimal exponential atlas, we choose to reduce the amount of chart switches. 
To this end, we introduce indicator functions $\sigma_j:G\rightarrow \mathbb{R}$ s.t. $\sigma_j(g) > 0$ if $g\in U_j$, and switch charts when $\sigma_j(g)$ is smaller than a threshold value (see Appendix \ref{ssec:app_ChartTransitions}).

\begin{remark}
    The use of a minimal exponential atlas allows to store way-points of a trajectory $g(t) \in G \subset GL(m,\mathbb{R})$ in terms of the $n+1$ numbers $(q_j(t),j)$, rather than storing the $m\times m$ entries of $g$. In principle, this more memory-efficient storing does not require an integration procedure based on a minimal atlas, since it can be implemented as a post-processing step given the trajectory $g(t)$. While not done in this work, it is worth consideration to use standard numerical integrators such as RKMK, which come with well-known error-bounds \citep{MuntheKaas1999}. 
\end{remark}

\subsection{Gradients on Lie groups}\label{ssec:technical_tools:gradients}

The gradient of a function $V\in C^1(G,\mathbb{R})$ is the co-vector field $\text{d}V \in \Gamma(TG)$. For any given $g\in G$ the gradient $\text{d}V(g) \in T_g^*G$ is a co-vector, and transforms in a dual manner to a vector $\dot{g} \in T_g G$. With reference to Figure \ref{fig:CommDiag_G}, the gradient can be represented as 
\begin{equation}\label{eq:bar_grad_G}
    \bar{\text{d}}_g V := (L_g)^* \text{d}V(g) \in\mathfrak{g}^*\,,
\end{equation}
and as 
\begin{equation}
    \text{d}_g V := {\Lambda^*} (\bar{\text{d}}_g V) \in \mathbb{R}^n\,.
\end{equation}

Equivalently, $\text{d}_g V$ can be found from the computation in a chart $(U_j,X_j)$ as (indeed, dual to \eqref{eq:G_vec_to_algebra_rep})
\begin{equation}\label{eq:G_covec_to_algebra}
    \text{d}_g V = K(q_j)^{-\top}\frac{\partial V}{\partial q_j}_{|q_j = X_j(g)}\,.
\end{equation}
Here, a computationally advantageous choice can be made for the chart map $X_j$: by choosing the chart $(U_j,X_j) = (U_g,X_g)$ in \eqref{eq:one_chart_G} one finds that $K(X_j(g)) = K(0) = I_{n}$, such that the computation of \eqref{eq:dexp_rep} can be avoided:
\begin{align}\label{eq:grad_G}
    \text{d}_g V & = K(q_j)^{-\top}\frac{\partial}{\partial q_j} V\bigg(g\exp\big(\Lambda ({q}_j)\big)\bigg)_{|q_j = 0} \nonumber \\ &= \frac{\partial}{\partial q_j} V\bigg(g\big(I+\Lambda({q}_j)\big)\bigg)_{|q_j = 0} \,. 
\end{align}
The final simplification in equation \eqref{eq:grad_G} holds for matrix Lie groups, where higher order terms of the power series \eqref{eq:general_matrix_exp} can be neglected.

\subsection{Composition of Lie groups}\label{ssec:technical_tools:composition} 
We briefly review the composition of Lie groups. Lie groups $G$ and $H$ can always be composed to form a product Lie group $G\times H$. For matrix Lie groups $G \subset GL(m,\mathbb{R})$ and $H \subset GL(l,\mathbb{R})$, a product Matrix Lie group $G\times H \subset GL(m+l,\mathbb{R})$ can be defined as \citep[Definition 4.17]{Hall2015} 
\begin{equation}
    G\times H := \{\begin{bmatrix} g & 0 \\ 0 & h \end{bmatrix} \, | \, g\in G,\, h\in H\}\,.
\end{equation}

The composition of matrix Lie groups has a block-diagonal structure. This block-diagonal structure reappears in the construction of the corresponding Lie algebra $\mathfrak{g}\oplus\mathfrak{h} \subset gl(m+l,\mathbb{R})$, the adjoint map, 
exponential map 
and the logarithmic map, which 
consist of their counterparts for $G$ and $H$. The algebra representation $\Lambda:\mathbb{R}^{n+k} \rightarrow \mathfrak{g}\oplus\mathfrak{h}$ can likewise be chosen to consist of the components $\Lambda_G:\mathbb{R}^{n} \rightarrow \mathfrak{g}$ and $\Lambda_H:\mathbb{R}^{k} \rightarrow \mathfrak{h}$.

\section{The cases $SO(3)$ and $SE(3)$}\label{sec:kin} 
Prior theory is applied to the Lie groups $SO(3)$ and $SE(3)$.  

\subsection{The matrix Lie groups SO(3) and SE(3)} \label{ssec:kin:prior_def} 

Here, the special orthogonal group $SO(3)$ and the special Euclidean group $SE(3)$ are directly defined as matrix Lie groups that collect transformations of the Euclidean 3-space $\mathbb{R}^3$. $SO(3)$ can be described as the collection of rotations of a vector space $\mathbb{R}^3$, and $SE(3)$ as the collection of simultaneous rotations and translations of $\mathbb{R}^3$, implemented on the vector space of homogeneous vectors (vectors of the form $\begin{pmatrix} x \\ 1 \end{pmatrix}$ with $x\in \mathbb{R}^3$, see \cite[Ch. 3.1]{Murray1994}). 

Define $SO(3) \subset GL(3,\mathbb{R})$ and $SE(3) \subset GL(4,\mathbb{R})$ as the matrix Lie groups
\begin{align} 
    SO(3) &:= \{R \in \mathbb{R}^{3\times3} | \, R^\top R = I,\, \text{det}(R) = 1\}\,, \\
    SE(3) &:= \{\begin{bmatrix} R & p \\ 0& 1 \end{bmatrix} \in \mathbb{R}^{4\times4}| \, R\in SO(3),\,p \in \mathbb{R}^3\} \,,
\end{align}
in both cases using matrix composition as the group operation. 

\begin{remark}\label{remark:H^A_B}
    Concerning notation for relative poses of rigid bodies: $H^A_B \in SE(3)$ indicates the pose of a reference frame $\Psi_B$ as seen from $\Psi_A$, while $H^B_A = {H^A_B}^{-1}$. 
\end{remark}

The Lie algebras of $SO(3)$ and $SE(3)$ are the vector spaces $so(3) \subset gl(3,\mathbb{R})$ and $se(3)\subset gl(4,\mathbb{R})$, respectively, with their Lie bracket given by the matrix commutator. 
The Lie algebras $so(3)$ and $se(3)$ are given by 
\begin{align}
    so(3) &:= \{\Tilde{\omega} \in \mathbb{R}^{3\times3} \, | \, \Tilde{\omega} = - \Tilde{\omega}^\top\}\,, \\
    se(3) &:= \{\begin{bmatrix} \Tilde{\omega} & v \\ 0& 0 \end{bmatrix} \in \mathbb{R}^{4\times4} \, | \, \Tilde{\omega}\in so(3), v \in \mathbb{R}^3\}\,.
\end{align}
Arbitrary elements of $so(3)$ and $se(3)$ will be denoted by $\tilde{\omega}$ and $\tilde{T}$, respectively.

The vector space isomorphism $\Lambda_{SO(3)}:\mathbb{R}^3\rightarrow so(3)$ is defined as
\begin{equation}
    \Lambda_{SO(3)}(\omega) = \Lambda_{SO(3)}\begin{pmatrix} \omega_1 \\ \omega_2 \\ \omega_3 \end{pmatrix} 
    := 
    \begin{bmatrix} 
    0 & -\omega_3 & \omega_2 \\ 
    \omega_3 & 0 & -\omega_1 \\
    -\omega_2 & \omega_1 & 0
    \end{bmatrix}\,. 
\end{equation}
For $SE(3)$, define 
$\Lambda_{SE(3)}:\mathbb{R}^6 \rightarrow se(3)$ for $T = \begin{pmatrix} \omega \\ v \end{pmatrix} \in \mathbb{R}^6$ with $\omega,v \in \mathbb{R}^3$, via
\begin{equation}
    \Lambda_{SE(3)}(T) := 
    \begin{bmatrix} \Lambda_{SO(3)} (\omega) & v \\ 0& 0 \end{bmatrix}\,.
\end{equation}
Both $\Lambda_{SO(3)}$ and $\Lambda_{SE(3)}$ will be denoted as $\Lambda$ in the following, since it is clear from context which one is meant.

\begin{remark}\label{remark:T^{A,B}_C}
    Concerning notation for the relative twists (velocities) of rigid bodies: Consider a curve $H^A_B:\mathbb{R}\rightarrow SE(3)$, then twists $\Tilde{T} \in se(3)$ appear as the left and right translated change-rates $\dot{H}^A_B = \frac{d}{dt} H^A_B(t)$ by
\begin{equation} \label{eq:infinitesimal_twists}
    \Tilde{T}^{B,A}_B = H^B_A \dot{H}^A_B \,, \qquad    \Tilde{T}^{A,A}_B = \dot{H}^A_B H^B_A \,.
\end{equation}
Both $\Tilde{T}^{B,A}_B$ and $\Tilde{T}^{A,A}_B$ represent the generalized velocity (twist) of frame $\Psi_B$ with respect to $\Psi_A$, but $\Tilde{T}^{B,A}_B$ is expressed in $\Psi_B$ while $\Tilde{T}^{A,A}_B$ is expressed in $\Psi_A$ \citep[ Ch. 2.4]{Murray1994}.
\end{remark}

The adjoint representations 
of $so(3)$ and $se(3)$ follow from the definition 
\eqref{eq:algebra_adjoint} as 
\begin{align}
     \text{ad}_\omega = \Lambda(\omega) \,, \quad
     \text{ad}_T = \begin{bmatrix} \Lambda(\omega) & 0 \\ \Lambda(v)& \Lambda(\omega) \end{bmatrix}\,.
\end{align}

The exponential maps for $SO(3)$ and $SE(3)$ are almost-global diffeomorphisms that relate $\Tilde{\omega} \in so(3)$ to $R \in SO(3)$ via \eqref{eq:expSO3} and $\Tilde{T}\in se(3)$ to $H \in SE(3)$ via \eqref{eq:expSE3} \citep[App. A, Sec. 2.3]{Murray1994}:
\begin{align}
    e^{\Tilde{\omega}} = \sum_{n = 0}^{\infty} \frac{1}{n!} \Tilde{\omega}^n = I + \sin(\theta) \Tilde{\hat{\omega}} + (1-\cos(\theta))\Tilde{\hat{\omega}}^2 \,, \label{eq:expSO3} \\
    e^{\Tilde{T}} = \sum_{n = 0}^{\infty} \frac{1}{n!} \Tilde{T}^n = 
    \begin{bmatrix} e^{\Tilde{\omega}} & \frac{1}{\theta^2}(I-e^{\Tilde{\omega}}){\Tilde{\omega}}v+\omega^{\top}v\omega \\ 0& 1 \end{bmatrix} \,, \label{eq:expSE3} 
\end{align}
with $\theta = \|\omega\|_2 = \sqrt{\omega^\top\omega}$ and $\Tilde{\hat{\omega}} = \Tilde{\omega}/\theta$. 

For $\theta < \pi$ their inverses are presented in equations \eqref{eq:log_SO3} and \eqref{eq:log_SE3}, respectively: 
the log map for $SO(3)$ is \citep[App. A, Sec. 2.3]{Murray1994}
\begin{equation}\label{eq:log_SO3}
    \log(R) = 
    \begin{cases}
    \cos^{-1}(\frac{1}{2}(\text{Tr}(R)-1))\frac{A}{\|A\|}\,, \qquad &R \neq I \,, \\
    0_{3\times3}\,, \qquad &  R = I \,,
    \end{cases}
\end{equation}
with $A = \frac{1}{2}(R-R^\top)$ the anti-symmetric part of $R$, while $\|A\|:= \sqrt{-\frac{1}{2}\text{Tr}(A^2)}$. \\
Denoting $\Tilde{\omega} = \text{log}(R)$, the log map for $SE(3)$ is (\cite{Murray1994}, App. A, Sec. 2.3)
\begin{align}
    &\log(\begin{bmatrix} R & p \\ 0& 1 \end{bmatrix}) = \begin{bmatrix} \Tilde{\omega} & Q p \\ 0& 0  \end{bmatrix}\,, \label{eq:log_SE3} \\
    &Q = I - \frac{1}{2} \Tilde{\omega} + \frac{2\sin(\theta)-\theta(1+\cos(\theta))}{2\theta^2\sin(\theta)}\Tilde{\omega}^2 \,. \label{eq:Q_for_SE3_log} 
\end{align}
Since $\lim_{\theta \rightarrow 0} Q = I$, a well-defined $Q$ is given by \eqref{eq:Q_for_SE3_log} regardless of $R$, such that the logarithm on $SE(3)$ \eqref{eq:log_SE3} has the range of validity of the logarithm on $SO(3)$ \eqref{eq:log_SO3}, bounded only by the rotational part. 


\subsection{Minimal atlas}\label{ssec:kin:MinAt}
Here we construct minimal exponential atlases for $SO(3)$ and $SE(3)$ as special cases of the exponential atlas \eqref{eq:one_chart_G} - \eqref{eq:one_chart_G_X_inv} for the respective Lie groups. Both atlases use four charts, which is the theoretical minimum size of an atlas for $SO(3)$ and $SE(3)$ \citep{Grafarend2011}.

For the atlas on $SO(3)$ the four exponential charts are centered on the elements 
\begin{align}
    & R_0 = 
    \begin{bmatrix*}[r] 1 & 0 & 0 \\ 0 & \phantom{-}1 & 0 \\ 0 & 0 & \phantom{-}1 \end{bmatrix*}
    \,, \quad 
    R_1 = 
    \begin{bmatrix*}[r] 1 & 0 & 0 \\ 0 & -1 & 0 \\ 0 & 0 & -1 \end{bmatrix*} 
    \,, \nonumber \\
    & R_2 = 
    \begin{bmatrix*}[r] -1 & 0 & 0 \\ 0 & \phantom{-}1 & 0 \\ 0 & 0 & -1 \end{bmatrix*}
    \,, \quad 
    R_3 = 
    \begin{bmatrix*}[r] -1 & 0 & 0 \\ 0 & -1 & 0 \\ 0 & 0 & \phantom{-}1 \end{bmatrix*}
    \nonumber \,.
\end{align}
The full minimal atlas for $SO(3)$ is then given by 
\begin{align}\label{eq:MinAtlas_SO3}
    \mathcal{A}_{\min}^{SO(3)} &:= & \{(U_j,x_j) \;|\; \, j \in \{0,1,2,3\}\, \}&\,, \\
    U_j &:= & \{R_j e^{\Tilde{\omega}} \;|\; \omega \in \mathbb{R}^3, |\omega|<\pi  \}&\,, \label{eq:chart_region_SO3} \\
    x_j(R) &:= & \Lambda^{-1}\text{log}(R_j^\top R)&\,, \label{eq:chart_map_SO(3)} \\
    x_j^{-1}(q_j) &= & R_j e^{\Lambda(q_j)}&\,.
\end{align}
Intuitively speaking, the open set $U_{j}$ contains all orientations that are reachable from $R_j$ by a rotation through an angle less than $\pi$. 

A proof that $\mathcal{A}_{\min}^{SO(3)}$ covers $SO(3)$ is shown in Appendix \ref{ssec:app_MinAt_Complete}.


\noindent
For the atlas on $SE(3)$, define the centers of the exponential charts on $SE(3)$ as
\begin{equation*}
    H_j := \begin{bmatrix}R_j & 0 \\ 0 & 1 \end{bmatrix}\,, \quad j \in \{0,1,2,3\}\,.
\end{equation*}
The full minimal atlas on $SE(3)$ is then given by

\begin{align}\label{eq:MinAtlas_SE3}
    \mathcal{A}_{\min}^{SE(3)} &:=& \{(\mathcal{U}_j,\mathcal{X}_j) \;|\; (U_j,x_j) \in \mathcal{A}_{\min}^{SO(3)}\}&\,, \\
    \mathcal{U}_j &:=& \{\begin{bmatrix} R & p \\ 0& 1 \end{bmatrix} \;|\; R \in U_j, p \in \mathbb{R}^3\}&\,, \label{eq:chart_region_SE3} \\
    \mathcal{X}_j(H) &:=& \Lambda^{-1}\text{log}(H_j^{-1}H)&\,, \\
    \mathcal{X}_j^{-1}(q_j) &=& H_j e^{\Lambda(q_j)}&\,.
\end{align}

\subsection{Expressing scalar functions}\label{ssec:kin:FunDef}

\begin{figure}
 \centering
 \includegraphics[height=10em]{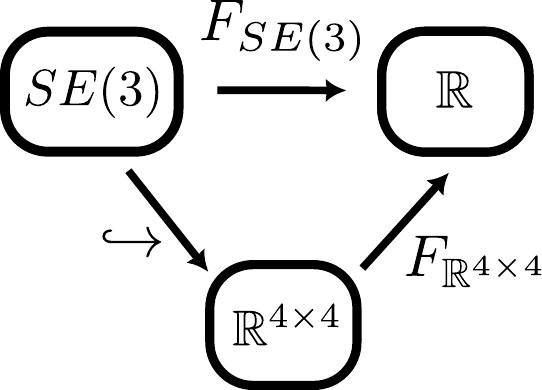}
 \caption{Commutative diagram highlighting how the natural embedding $\hookrightarrow:SE(3)\rightarrow\mathbb{R}^{4\times 4}$ can be used to restrict a general matrix function to $SE(3)$. }\label{fig:Function_SE3}
\end{figure}

We briefly highlight how to represent scalar-valued functions $F_{SO(3)}:SO(3)\rightarrow\mathbb{R}$ and $F_{SE(3)}:SE(3)\rightarrow\mathbb{R}$. One approach is to define functions on manifolds by restricting a function defined on an embedding space $\mathbb{R}^{n\times n}$ \citep{Falorsi2020}. For Lie groups, it is immediately applicable whenever a matrix representation is available. 

For example, on $SE(3)$ one restricts a function $F_{\mathbb{R}^{4\times 4}}:\mathbb{R}^{4\times4}\rightarrow\mathbb{R}$ to arguments from $SE(3)\subset \mathbb{R}^{4\times 4}$, see Figure \ref{fig:Function_SE3}. 
The gradient of such functions are computed by an application of equation \eqref{eq:grad_G}:

\begin{equation}\label{eq:grad_SE3}
    \text{d}_H F_{SE(3)} = \frac{\partial}{\partial q} F_{SE(3)}\big(H(I+\tilde{q})\big)_{q=0} \,.
\end{equation}

The approach also holds for $SO(3)$, which can be embedded in $\mathbb{R}^{3\times 3}$, instead of $\mathbb{R}^{4\times 4}$.


\section{Optimizing a rigid body control} \label{sec:opt} 

The optimization procedure in Section \ref{sec:main_result} is applied to potential energy and damping injection based control of a fully actuated rigid body. 

The core idea of potential energy shaping and damping injection is to combine advantages of energy-balancing passivity based control (EB-PBC) \citep{Ortega2000} and of control by interconnection \citep{Ortega2008}, which provide stability guarantees when interfacing with physical systems. Our article presents a class of controllers that generalizes the architecture presented by \cite{Rashad2019}. We address common safety concerns about machine learning in control loops by optimizing a class of controllers that guarantees stability and a bounded energy by design.

\subsection{Control of a rigid body}\label{ssec:dyn_ctrl}
 
The trajectory of a rigid body in Euclidean 3-space is fully described by the curve  $H^0_b:\mathbb{R}\rightarrow SE(3)$ that gives the relative position and orientation of a frame $\Psi_b$ attached to the rigid body with respect to an inertial frame $\Psi_0$ (see Remark \ref{remark:H^A_B}). Following equation \eqref{eq:infinitesimal_twists}, the twist of the body with respect to $\Psi_0$, expressed in the body frame $\Psi_b$ is
\begin{equation}
    T^{b,0}_b = \Lambda^{-1} (H^b_0\dot{H}^0_b)\,.
\end{equation}
Given the inertia tensor $\mathcal{I}\in\mathbb{R}^{6\times 6}$, $P^b = \mathcal{I}T^{b,0}_b \in\mathbb{R}^6$ represents the momentum in the body frame. The indices remain fixed in the subsequent treatment, and are suppressed to avoid cluttering (i.e., $H := H^0_b,\, T := T^{b,0}_b,\, P := P^b)$.

The dynamics of a rigid body follow from the Hamiltonian equations on matrix Lie groups (\eqref{eq:Lie_Ham_Matrix} and \eqref{eq:Lie_Ham_Co_Matrix} in Appendix \ref{app:background_Hamiltonian_systems_on_Lie_groups}) by setting $G = SE(3)$ and letting $\mathcal{H}(H,P)= \frac{1}{2}{P}^\top\mathcal{I}^{-1}P$. Including an external input wrench $W\in \mathbb{R}^6$, the dynamics read:
\begin{align}
    \dot{H} &= H \Lambda (\mathcal{I}^{-1}P)\,, \label{eq:Kin_SE3} \\
    \dot{P} &= \text{ad}_{\mathcal{I}^{-1}P}^\top P + W\,. \label{eq:DynSE3}
\end{align}

In control by potential energy shaping and damping injection, this external wrench is constructed as a sum of a potential gradient term $W_V$ and a damping term $W_D$:
\begin{equation} \label{eq:ext_wrench}
    W = W_V + W_D\,.
\end{equation}
In our approach the potential gradient term 
\begin{equation}
    W_V = - \text{d}_H V
\end{equation} is computed by an application of equation \eqref{eq:grad_SE3}.

Nonlinear, configuration-dependent viscous damping takes the form
\begin{equation}
    W_{D} = -B(H, P) P\,,
\end{equation}
with $B(H, P)\mathcal{I} \in \mathbb{R}^{6\times 6}$ a symmetric and positive definite matrix. \endnote{Symmetry and positive definiteness of $B(H,P)$ are not well defined because it is a $(1,1)$ tensor: to be technically precise one must impose that the $(0,2)$ tensor $B(H, P)\mathcal{I}$ is symmetric and positive definite. This imposes that the rate of energy lost to damping $T^\top B \mathcal{I} T \geq 0$. } 

\begin{remark}\label{remark:quadratic_control}In this context, the control architecture of \cite{Rashad2019} corresponds to the popular yet very particular choice of a constant $B(H,P)$ for the damping injection, while their potential $V(H)$ shows a quadratic dependence on translations and a nearly quadratic dependence on rotations. Their controller may be interpreted as a linear PD controller on $SE(3)$, where our work may be seen as a nonlinear PD controller on $SE(3)$.
\end{remark}

\subsection{Stability}\label{ssec:Stability}
We present here a general proof of stability for the class of controllers. 
\begin{theorem}[Stability]\label{thm:stability}
    Given the system \eqref{eq:Kin_SE3}, \eqref{eq:DynSE3} together with the controller \eqref{eq:ext_wrench} given as 
    \begin{equation}
        W(H,P) = - \text{d}_{H} V -B(H, P) P\,.
    \end{equation}
    If $\forall H\in SE(3), P\in \mathbb{R}^6: B(H, P)\mathcal{I} > 0$ and $V:SE(3)\rightarrow \mathbb{R}$ is lower-bounded, then $\lim_{t\rightarrow \infty} P(t) = 0$ and any local minimum of $V$ is locally asymptotically stable.
\end{theorem}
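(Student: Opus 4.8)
The plan is to exhibit the total energy of the closed-loop system as a Lyapunov function and apply LaSalle's invariance principle. The total energy is the sum of the kinetic (Hamiltonian) term and the shaped potential,
\begin{equation}
    \mathcal{H}_{cl}(H,P) = \tfrac{1}{2} P^\top \mathcal{I}^{-1} P + V(H)\,.
\end{equation}
First I would compute the time-derivative of $\mathcal{H}_{cl}$ along the closed-loop trajectories \eqref{eq:Kin_SE3}, \eqref{eq:DynSE3} with $W = -\text{d}_H V - B(H,P)P$. The potential term contributes $\dot V = \langle \text{d}_H V, \Lambda^{-1}(H^{-1}\dot H)\rangle = (\text{d}_H V)^\top \mathcal{I}^{-1} P$ using \eqref{eq:Kin_SE3} and the pairing \eqref{eq:G_covec_to_algebra}. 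The kinetic term contributes $P^\top \mathcal{I}^{-1}\dot P = P^\top \mathcal{I}^{-1}(\text{ad}_{\mathcal{I}^{-1}P}^\top P - \text{d}_H V - B P)$. Writing $T = \mathcal{I}^{-1}P$, the gyroscopic term $T^\top \text{ad}_{T}^\top P = (\text{ad}_T T)^\top P$ vanishes since $\text{ad}_T T = [T,T] = 0$, and the potential-gradient terms cancel between $\dot V$ and the kinetic contribution. What survives is exactly the damping dissipation,
\begin{equation}
    \dot{\mathcal{H}}_{cl} = -P^\top \mathcal{I}^{-1} B(H,P) P = -T^\top B(H,P)\mathcal{I}\, T \leq 0\,,
\end{equation}
where nonnegativity is the positive-definiteness hypothesis $B\mathcal{I} > 0$ (cf. the footnote on $B\mathcal{I}$ being the well-defined symmetric positive-definite $(0,2)$ tensor).

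Next I would invoke LaSalle. Since $V$ is lower-bounded and the kinetic term is nonnegative, $\mathcal{H}_{cl}$ is lower-bounded; together with $\dot{\mathcal{H}}_{cl}\le 0$ this shows $\mathcal{H}_{cl}(t)$ converges and bounds the sublevel set in which the trajectory evolves. The invariance principle then forces the trajectory into the largest invariant subset of $\{(H,P): \dot{\mathcal{H}}_{cl} = 0\}$. Because $B\mathcal{I} > 0$ is strictly positive definite, $\dot{\mathcal{H}}_{cl} = 0$ holds only when $T = 0$, i.e. $P = 0$, which yields $\lim_{t\to\infty} P(t) = 0$. On this set the kinematics \eqref{eq:Kin_SE3} give $\dot H = 0$, and the dynamics \eqref{eq:DynSE3} with $P=0$ reduce to $\dot P = -\text{d}_H V$; invariance ($\dot P = 0$) then forces $\text{d}_H V = 0$, so the invariant set consists precisely of the critical points of $V$ with zero momentum.

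Finally, for the asymptotic stability of a local minimum $H^\star$ of $V$, I would treat $(H^\star, 0)$ as an equilibrium and use $\mathcal{H}_{cl}$ shifted by its value at $H^\star$ as a local Lyapunov function: near a strict local minimum it is locally positive definite in $(H,P)$, it has the nonpositive derivative computed above, and LaSalle's argument restricted to a small sublevel set isolates $(H^\star,0)$ as the only invariant point, giving local asymptotic stability.

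The main obstacle I anticipate is not the energy balance itself but the rigorous application of LaSalle on the manifold $SE(3)\times\mathbb{R}^6$ rather than on $\mathbb{R}^n$: one must confirm that the sublevel sets on which the argument runs are compact (or that trajectories are precompact), which requires care because $SE(3)$ is noncompact in its translational directions. This is where the lower-boundedness of $V$ must be upgraded — implicitly one needs $V$ to be radially unbounded (proper) in the translations, or to restrict attention to a compact sublevel neighborhood of the local minimum, for the invariance principle to apply and for the limit sets to be nonempty and invariant. A secondary technical point is justifying $\dot V = (\text{d}_H V)^\top \mathcal{I}^{-1}P$ in the left-trivialized gradient convention of \eqref{eq:grad_G}, ensuring the pairing is consistent so that the potential terms cancel exactly.
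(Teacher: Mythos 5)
Your proposal matches the paper's proof essentially step for step: the same total-energy Lyapunov candidate $\mathcal{H}_{cl} = \frac{1}{2}P^\top\mathcal{I}^{-1}P + V(H)$, the same dissipation identity $\dot{\mathcal{H}}_{cl} = -T^\top B\mathcal{I}\,T \leq 0$, and the same LaSalle argument forcing $P \to 0$ and $\text{d}_H V = 0$ on the largest invariant set, with local minima of $V$ asymptotically stable. You are in fact slightly more explicit than the paper on two points it leaves implicit --- the vanishing of the gyroscopic term via $(\text{ad}_T^\top P)^\top T = P^\top \text{ad}_T T = 0$, and the precompactness of sublevel sets needed to invoke LaSalle on the noncompact $SE(3)$ (the paper's argument tacitly assumes this, whereas you correctly note it requires properness of $V$ in the translations or restriction to a compact sublevel neighborhood of the minimum).
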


\begin{proof}
    With $\text{E}_{\text{Pot}} = V(H)$ and $\text{E}_{\text{Kin}} = \frac{1}{2} {P}^\top\mathcal{I}^{-1} P$, take the system's total energy $\text{E} = \text{E}_{\text{Pot}} + \text{E}_{\text{Kin}}$ as the Lyapunov function candidate. Then 
    \begin{align}
        \dot{\text{E}}_{\text{Pot}} &= (\text{d}_{H}V)^\top T \\
        \dot{\text{E}}_{\text{Kin}} &= (\dot{P})^\top\mathcal{I}^{-1} P = (\dot{P})^\top T \\ 
         &=  (\text{ad}^\top_{T}P - \text{d}_{H} V)^\top T - (P)^\top B T \nonumber \\
         &= (-\text{d}_{H}V)^\top T - T^\top B\mathcal{I} T  \,, \nonumber
    \end{align}
    such that  
    \begin{equation}
        \dot{\text{E}} = \dot{\text{E}}_{\text{Pot}} + \dot{\text{E}}_{\text{Kin}} = -T^\top B\mathcal{I} T \leq 0 \,.
    \end{equation}
    By LaSalle's invariance principle, the system converges to the greatest invariant subset where $\dot{E} = 0$. Since $B(H, P) > 0$, the set with $\dot{E} = 0$ is simply the set 
    \begin{equation}
        S = \{(H,P)\, |\, P = 0 \}\,.
    \end{equation}
    By inspection of the dynamics \eqref{eq:DynSE3} the greatest invariant subset of $S$ is the set with $\text{d}_{H} V = 0$, corresponding to the maxima and minima of $V$. Since $\dot{E} \leq 0$, the system cannot converge to maxima of $V$, leaving the minima of $V$ as limit sets and local minima of $V$ as asymptotically stable equilibria.
    \qed
\end{proof}



\subsection{Optimization by the adjoint method on SE(3)}\label{ssec:opt_SE3}

In order to apply the adjoint method on Lie groups (Theorem \ref{thm:adjoint_method_matrix_lie_group}), we re-define the dynamics \eqref{eq:Kin_SE3} and \eqref{eq:DynSE3} on the Lie group $G = SE(3) \times se^*(3)$. To this end, we choose to equip $G$ with the element-wise composition $(H_1,P_1)\circ(H_2,P_2) = (H_1H_2,P_1+P_2)$. 

As a composition of matrix Lie groups $SE(3)$ and $(\mathbb{R}^6,+)$, $G \subset GL(11,\mathbb{R})$ is defined as
\begin{equation}
    G := \{ \begin{bmatrix} H & 0 & 0 \\ 0 & I & P \\ 0& 0 & 1  \end{bmatrix} \, | \, H \in SE(3), P \in \mathbb{R}^6 \}\,,
\end{equation}
where matrix multiplication indeed corresponds to the element-wise composition in the abstract $G$. 
For details on the construction of $G$, the Lie algebra $\mathfrak{g}$, the choice of Lie algebra representation $\Lambda:\mathbb{R}^{12}\rightarrow\mathfrak{g}$, adjoint map and exponential map see Appendix \ref{app:Lie_group_SE3_x_R^6}.
%
%
%
The dynamics for $\Gamma(t) \in G$ read 
\begin{align}\label{eq:dyn_SE3_full}
    \dot{\Gamma} & = f_\theta(\Gamma) = \begin{bmatrix} \dot{H}(H,P) & 0 & 0 \\ 0 & 0 & \dot{P}_\theta(H,P) \\ 0& 0 & 0  \end{bmatrix} \,,  
\end{align}
where 
\begin{align}
    \dot{H}(H,P) &= H \Lambda (\mathcal{I}^{-1}P) \,,\\
    \dot{P}_\theta(H,P) &= \text{ad}_{\mathcal{I}^{-1}P}^\top P + W_\theta(H,P) \label{eq:dyn_SE3_dPdt}\,.
\end{align} 
Here, the control-wrench $W_\theta:SE(3)\times\mathbb{R}^6 \rightarrow \mathbb{R}^6$ is parameterized by $\theta \in \mathbb{R}^{n_\theta}$.

Given a cost $C^T_{f_{\theta}}(\Gamma,\theta)$ of the type \eqref{eq:single_trajectory_cost} and a distribution $\mathbb{P}$ of initial conditions $\Gamma_0$, define an optimization problem in the form of \eqref{eq:abstract_optimization_problem}:
\begin{equation}\label{eq:optimization_constraints}
    \min_\theta \, \mathbb{E}_{\Gamma_{0} \sim \mathbb{P}(\Gamma_{0})}[C(\Gamma_0,\theta)] = \min_\theta J(\theta) \,. 
\end{equation}

As in Section \ref{ssec:main_result/main_result}, approximate $J(\theta) \approx \sum_{i=0}^N C^T_{f_\theta}(\Gamma_i,\theta)$ and apply Theorem \ref{thm:adjoint_method_matrix_lie_group} to compute the parameter gradient of the $C^T_{f_\theta}(\Gamma_i,\theta)$ by equation \eqref{eq:parameter_gradient_matrix_lie_group}.

The dynamics of $\lambda_\Gamma$ follow from equation \eqref{eq:Mat_Lie_Adjoint_Sensitivity_Co} as
\begin{align}\label{eq:adjoint_sensitvity_co_se3}
    & \dot{\lambda}_\Gamma = - \text{d}_\Gamma\bigg(\lambda_\Gamma\big(\Lambda^{-1}(\tilde{f}_\theta(\Gamma)\big)+r(\Gamma,\theta)\bigg)  
        + \text{ad}_{\tilde{f}_\theta}^\top \lambda_\Gamma \,, \\
    & \lambda_{\Gamma}(T) = \text{d}_{\Gamma} F\,,
\end{align}
where $\Lambda^{-1}(\tilde{f}_\theta)$ is
\begin{align}
    \Lambda^{-1}(\tilde{f}_\theta) &:= \Lambda^{-1}\big(\Gamma^{-1}f_{\theta}(\Gamma)\big) = \begin{pmatrix} \mathcal{I}^{-1}P \\ \dot{P}_\theta(H,P) \end{pmatrix}  \,.
\end{align}

The gradient $\text{d}_\Gamma:C^1(G,\mathbb{R})\rightarrow\mathbb{R}^{12}$ is given by Equation \eqref{eq:grad_G}. We split it into components $\text{d}_H:C^1(SE(3),\mathbb{R})\rightarrow\mathbb{R}^6$ and $\text{d}_P:C^1(\text{Vec}(6,\mathbb{R}),\mathbb{R})\rightarrow\mathbb{R}^6$ defined on the Lie groups $SE(3)$ and $(\mathbb{R}^6,+)$, respectively.

Further, split $\lambda_\Gamma= (\lambda_H, \lambda_P)$ into components $\lambda_H, \lambda_P \in \mathbb{R}^6$, and write out the control wrench $W_\theta(H,P) = - \text{d}_H V_\theta - B_\theta(H,P)P$.
Then the equation for $\dot{\lambda}_\Gamma$ can be resolved to 
\begin{align}\label{eq:co-state_Full}
    \dot{\lambda}&_\Gamma =  \\ 
    &
    \begin{pmatrix} 
    \text{d}_H \big(\lambda_P^\top (\text{d}_HV_\theta + B_\theta(H,P)P) - r(\Gamma,\theta)\big) 
    - \text{ad}_{\mathcal{I}^{-1}P}^\top \lambda_H 
    \\  
    \text{d}_P\big(\lambda_P^\top( B_\theta(H,P)P - \text{ad}_{\mathcal{I}^{-1}P}^\top P) - r(\Gamma,\theta)\big) 
    - \mathcal{I}^{-1}\lambda_H 
    \end{pmatrix} \nonumber \,, \\
    \Lambda&_H(T) = \text{d}_H F \,,\; \Lambda_P(T) = \text{d}_P F\,. \nonumber
\end{align}
\begin{remark}
    Note that the second derivative term $\text{d}_H \big(\lambda_P^\top (\text{d}_H V_\theta)\big)$ in the dynamics \eqref{eq:co-state_Full} is well defined, since $\lambda_P^\top (\text{d}_H V_\theta) \in C^1(SE(3),\mathbb{R})$.
\end{remark}

\begin{remark}\label{remark:group_action_se*3}
    
    Rather than constructing the direct product group $SE(3)\times se^*(3)$, the semi-direct product group $SE(3)\ltimes se^*(3)$ could have been defined using the Coadjoint representation $Ad_H^*:se^*(3)\rightarrow se^*(3)$, leading to the group operation $(H_1,P_1)\bullet (H_2,P_2) = (H_1H_2, P_1 + Ad_{H_1}^* P_2) $. Since an alternative choice of group action does not affect the optimum of the optimization, use of the semi-direct product group was not further investigated.  
    
\end{remark}


\section{Simulations}\label{sec:train}

We numerically solve the optimization problem \eqref{eq:abstract_optimization_problem} for the dynamics \eqref{eq:dyn_SE3_full}. We investigate various choices of final and running costs, distributions $\mathbb{P}$ and parameterizations of $V_\theta$, $B_\theta$.

\subsection{Quadratic vs. general potential shaping}\label{ssec:ex}
A controller with quadratic potential and linear damping injection (Section \ref{ssec:train:quadratic_training}) is compared to a controller with NN-parameterized potential and damping injection (Section \ref{ssec:train:nonlinear_training}).

\subsubsection{Choice of cost $C$ and distribution $\mathbb{P}$:} \label{ssec:train:cost_1}
We determine a final cost $F$ and a running cost $r$ to stabilize a static target state with $H = H_F,\,P=0$ over a horizon of $T = 3$ seconds.
The key properties of $F$ and $r$ are that both are differentiable and have their minimum in the target pose. Denote components of $H$ and $P$
\begin{equation*}
    H = \begin{bmatrix} R & p \\ 0 & 1\end{bmatrix}\,,\; P = \begin{pmatrix} P_\omega \\ P_v \end{pmatrix}\,,
\end{equation*}
where $R\in \mathbb{R}^{3\times 3}$, and $p,P_\omega,P_v \in \mathbb{R}^3$. With weights $w_1,\hdots,w_9 \in \mathbb{R}_+$, 
we choose $F$ and $r$ as
\begin{align}
    F(\Gamma) =& -w_1\text{Tr}(H_F^{-1}H) + w_2 \| p \|_2^2 \label{eq:specific_final_cost} \\ &+ w_3 \|P_\omega\|^2 + w_4 \|P_v\|_2^2 \nonumber \,, \\
    r(\Gamma) =&  - w_{5}\text{Tr}(H_F^{-1}H) + w_{6} \| p \|_2^2 \label{eq:specific_running_cost} \\ & + w_{7} \|P_\omega\|^2 + w_{8} \|P_v\|_2^2 + w_9 \|W_\theta(H,P)\|^2 \nonumber \,.
\end{align} 
 
Given scalars $\alpha, d, \theta_p, d_p \in \mathbb{R}$ and vectors $\omega, v, \omega_p, v_p \in \mathbb{R}^3$, and an average initial pose $H_I = H_F$, an initial condition $\Gamma_0$ is constructed as 
\begin{align}\label{eq: prior_dist}
    &\Gamma_0 = \begin{pmatrix}
    H \\ P
    \end{pmatrix} \\
    &q = \begin{pmatrix} \alpha \omega/ \|\omega\|_2 \\ d v/ \|v|_2 \end{pmatrix} \,, \\
    &H = H_I\text{exp}(\tilde{q}) \,, \\
    &P = \begin{pmatrix} \alpha_p \omega_p/ \|\omega_p\|_2 \\ d_p v_p/ \|v_p|_2 \end{pmatrix} \,.
\end{align}
The distribution $\mathbb{P}$ of $\Gamma_0$ is implemented by sampling 
\begin{align} \label{eq: uniform_sample_prior}
    &\alpha \sim \text{Uniform}[0,\pi]\,, \quad
    d \sim \text{Uniform} [0,1] \,,\\
    &\alpha_p \sim \text{Uniform}[0, 0.03]\,, \quad
    d_p \sim \text{Uniform}[0, 1] \,,
\end{align}
and sampling $\omega,v, \omega_p, v_p \in \mathbb{R}^3$ from a normal distribution $\mathcal{N}(\mu, \sigma^2)$ with standard deviation $\mu = (0,0,0)^T$ and variance $\sigma^2 = I_{3\times3}$. 

\subsubsection{Quadratic potential and linear damping injection:}\label{ssec:train:quadratic_training}
The quadratic controller coincides with the controller presented by \cite{Rashad2019}, in a setting of motion control. As such the quadratic potential $V_{Q,\theta}:SE(3)\rightarrow \mathbb{R}$ is given by
\begin{align}
    V_{Q,\theta}(\begin{bmatrix} R & p \\ 0& 1 \end{bmatrix}) 
    = \, & \frac{1}{4}p^\top K_\theta p + \frac{1}{4}p^\top R K_\theta R^\top p \\
    & - \text{Tr}(G_\theta (R - I_{3}))\,, \nonumber
\end{align}
and the constant damping injection is characterized by
\begin{equation}
    B_{Q,\theta}(H,P) = B_{Q,\theta} \in \mathbb{R}^{6\times6}\,.
\end{equation}
Here the translational stiffness matrix $K_\theta\in\mathbb{R}^{3\times 3}$, the rotational co-stiffness matrix $G_\theta\in\mathbb{R}^{3\times 3}$ and the damping injection matrix $B_{Q,\theta}$ are chosen as 
\begin{align}
    K_\theta & = 
    \begingroup
    \setlength\arraycolsep{0pt}
    \begin{bmatrix}
        \exp(\theta_1) & 0 & 0 \\
        0 & \exp(\theta_2) & 0 \\
        0 & 0 & \exp(\theta_3) \\
    \end{bmatrix} 
    \endgroup \\
    G_\theta & = 
    \begingroup
    \setlength\arraycolsep{0pt}
    \begin{bmatrix}
        \exp(\theta_4) & 0 & 0 \\
        0 & \exp(\theta_5) & 0 \\
        0 & 0 & \exp(\theta_6) \\
    \end{bmatrix}  
    \endgroup \\
    B_{Q,\theta} & = 
    \begingroup
    \setlength\arraycolsep{-3pt}
    \begin{bmatrix}
        \exp(\theta_7) & 0 & 0 & 0 & 0 & 0 \\
        0 & \exp(\theta_8) & 0 & 0 & 0 & 0 \\
        0 & 0 & \exp(\theta_9) & 0 & 0 & 0 \\
        0 & 0 & 0 & \exp(\theta_{10}) & 0 & 0 \\
        0 & 0 & 0 & 0 & \exp(\theta_{11}) & 0 \\
        0 & 0 & 0 & 0 & 0 & \exp(\theta_{12}) \\
    \end{bmatrix}
    \endgroup
\end{align}
where the diagonal elements $\exp(\theta_i)$ ensure that the matrices are positive definite. Note that the conditions of Theorem \ref{thm:stability} are guaranteed: $V_Q$ is lower bounded, and symmetry of $B_{Q,\theta}\mathcal{I}$ is guaranteed since $B_{Q,\theta}$ is diagonal and positive definite.

The control-law is then of the form
\begin{equation}\label{eq:quadratic-control-law}
    W_{Q,\theta}(H,P) = -\text{d}_{H}V_{Q,\theta} - B_{Q,\theta}P\,.
\end{equation}


The parameters are optimized over $1200$ training epochs, using the ADAM optimizer with decay $\gamma = 0.999$, a learning rate of $\eta = 0.001$ for the initial $1000$ epochs and restarting training at a learning rate of $\eta = 0.01$ for the final $200$ epochs. Additional parameters of the training are summarized in Appendix \ref{ssec:appB:Hyperparameters}, Table \ref{tab:appB:Hyperparameters_quadratic_training}. The training progress is summarized in Figure \ref{fig:training_progress_quadratic}, where Figure \ref{sfig:quadratic_loss} shows a monotonous decrease of the cost function over the training epochs, while Figures \ref{sfig:quadratic_th_final} and \ref{sfig:quadratic_d_final} indicate a steady improvement of the final system states with respect to the target pose. The resulting controller's performance is shown in Figure \ref{fig:examples_quadratic}. Here Figures \ref{sfig:quadratic_angle_3s} and \ref{sfig:quadratic_dist_3s} show that the controlled rigid bodies approach the target configuration. Figure \ref{sfig:quadratic_energy_components_average} shows that the controller uses the potential $V_{Q,\theta}$ to guide the rigid bodies towards the target pose, and that kinetic energy is quickly dissipated.  

\begin{figure*}[h]
\centering
\begin{tabular}{c c c}
  \begin{subfigure}{.33\textwidth}
    \scriptsize\includegraphics[width = \textwidth]{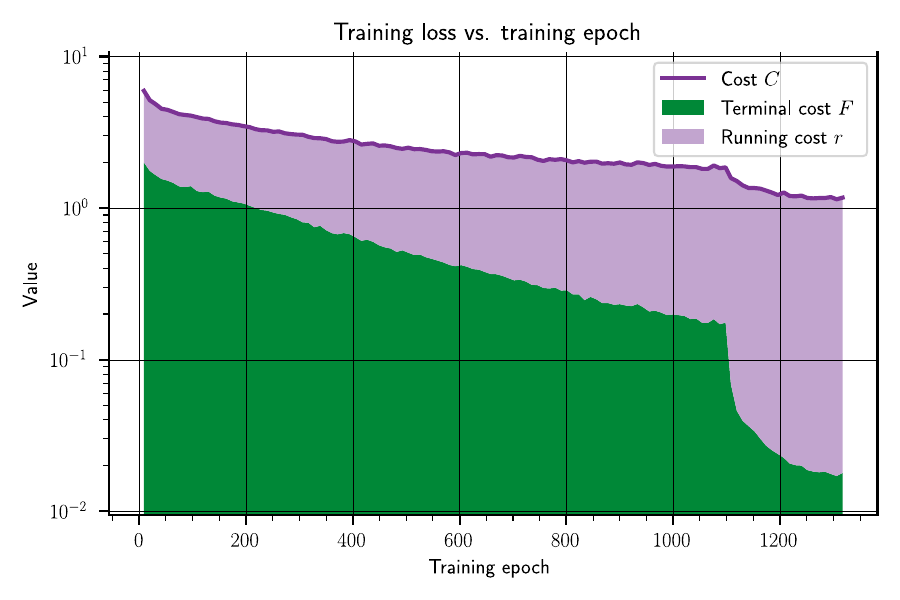}
    \subcaption{Loss, terminal loss and integral loss.}\label{sfig:quadratic_loss}
  \end{subfigure}
  &
  \begin{subfigure}{.33\textwidth}
    \scriptsize\includegraphics[width = \textwidth]{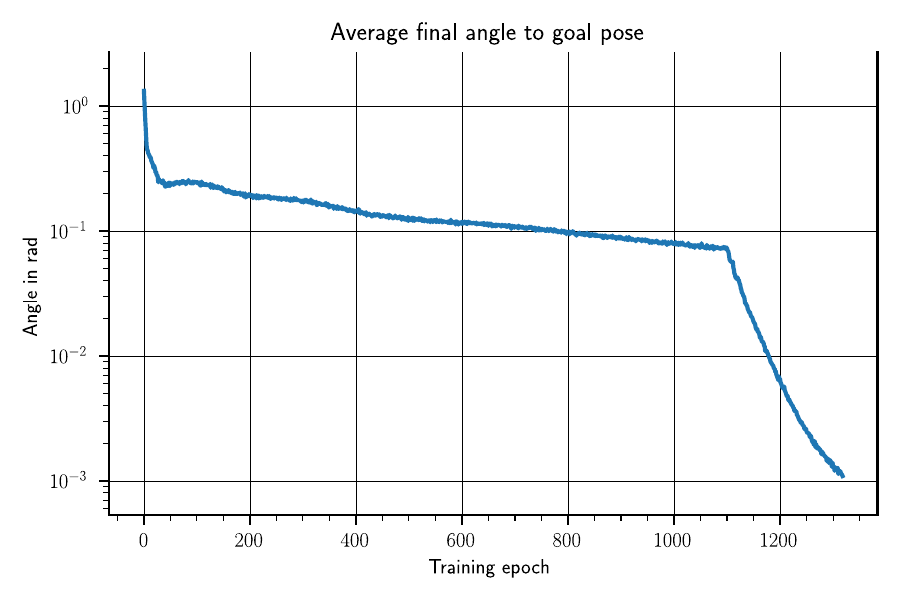}
    \subcaption{Average final angle towards goal pose.}\label{sfig:quadratic_th_final}
  \end{subfigure}
  &
  \begin{subfigure}{.33\textwidth}
    \scriptsize\includegraphics[width = \textwidth]{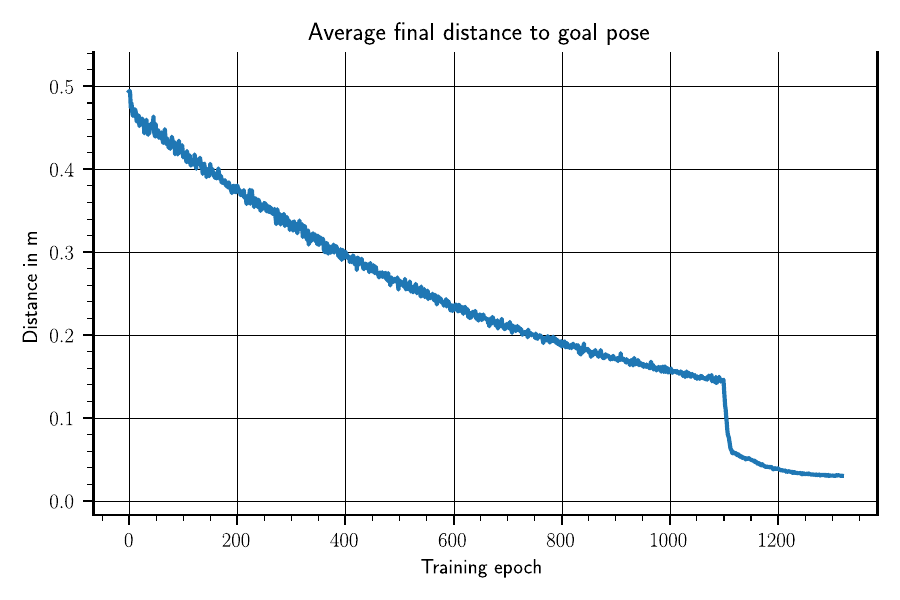}
    \subcaption{Average final distance towards goal pose.}\label{sfig:quadratic_d_final}
  \end{subfigure}
\end{tabular}
    \caption{Visualization of the training progress of the quadratic controller characterized by $V_{Q,\theta}$ and $B_{Q,\theta}$. All figures show data averaged over 2048 sample trajectories at the given epoch, with initial conditions sampled from $\mathbb{P}(\Gamma_0)$.}\label{fig:training_progress_quadratic}
\end{figure*}

\begin{figure*}[h]
\centering
\begin{tabular}{c c c}
  \begin{subfigure}{.33\textwidth}
    \scriptsize\includegraphics[width = \textwidth]{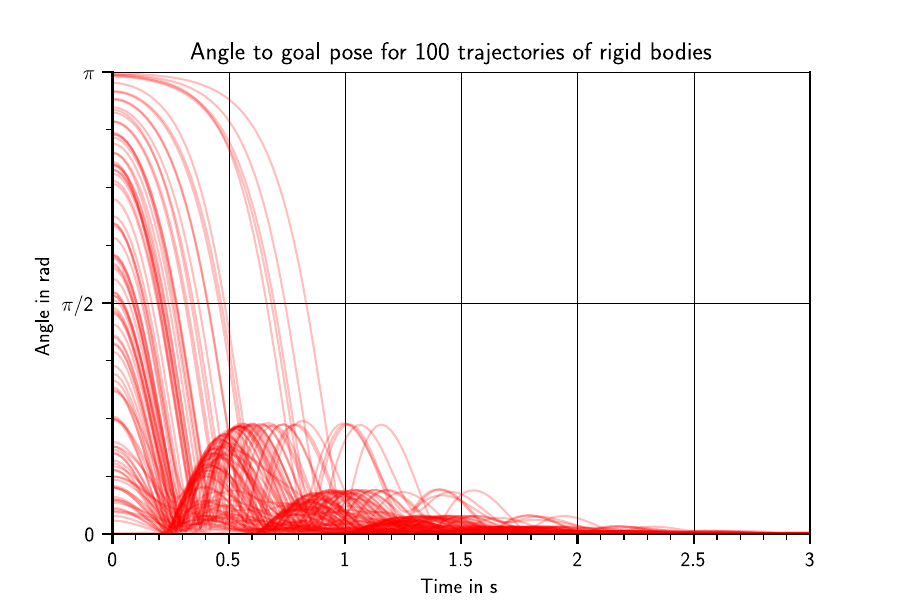}
    \subcaption{Angle towards goal pose over 3 seconds.}\label{sfig:quadratic_angle_3s}
  \end{subfigure}
  &
  \begin{subfigure}{.33\textwidth}
    \scriptsize\includegraphics[width = \textwidth]{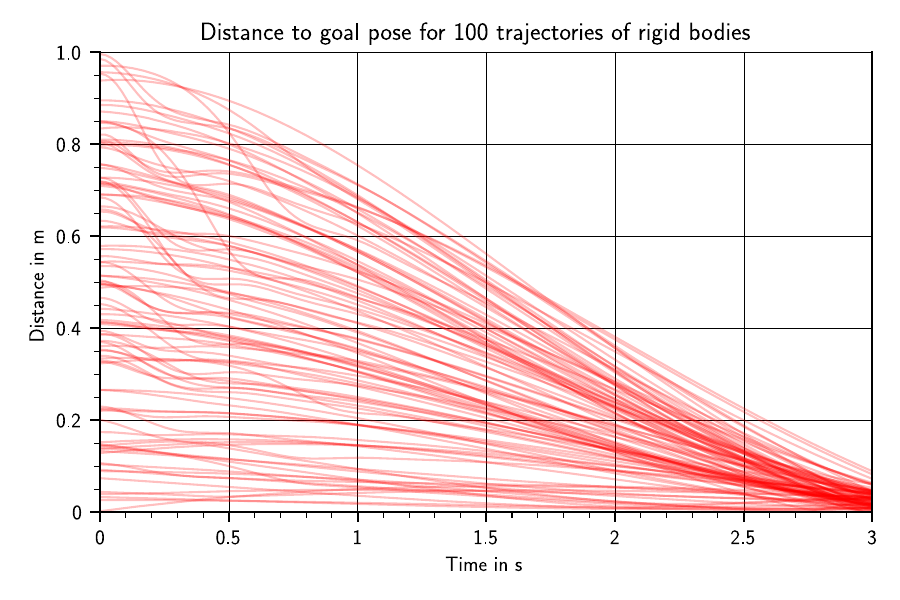}
    \subcaption{Distance towards goal pose over 3 seconds.}\label{sfig:quadratic_dist_3s}
  \end{subfigure}
  &
  \begin{subfigure}{.33\textwidth}
    \scriptsize\includegraphics[width = \textwidth]{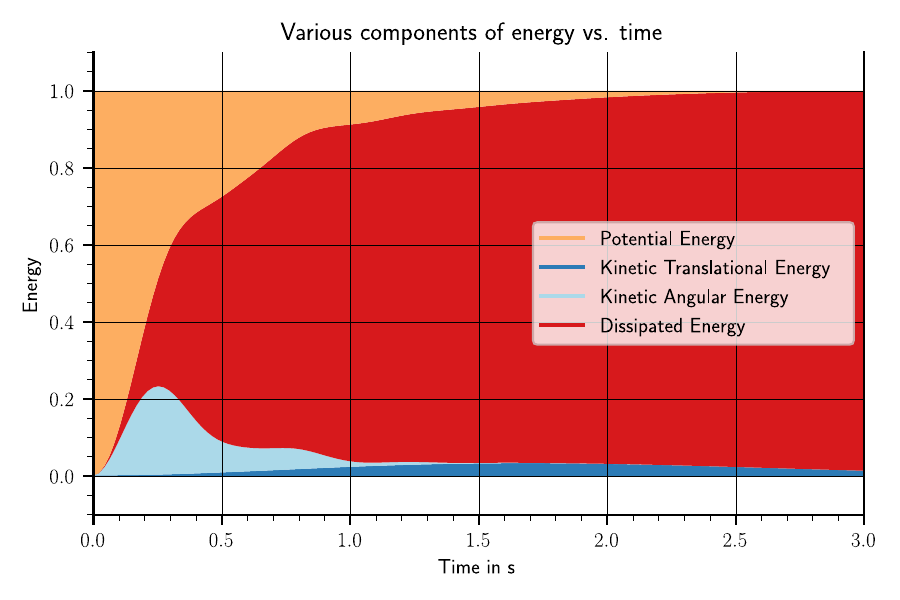}
    \subcaption{Average proportions of system energy over 3 seconds.}\label{sfig:quadratic_energy_components_average}
  \end{subfigure}
\end{tabular}
    \caption{Visualization of the performance of the quadratic controller characterized by $V_{Q,\theta}$ and $B_{Q,\theta}$, over 100 trajectories of rigid bodies with initial conditions sampled from $\mathbb{P}(\Gamma_0)$.}\label{fig:examples_quadratic}
\end{figure*}

\subsubsection{Nonlinear potential and damping injection:} \label{ssec:train:nonlinear_training}
Here we showcase the optimization of a nonlinear potential $V_{N,\theta}:SE(3)\rightarrow \mathbb{R}$ and a nonlinear damping injections $B_{N,\theta}(H,P) \in \mathbb{R}^{6\times 6}$. 
Both functions are parameterized by neural nets with one hidden layer of 64 neurons, using softplus and tanh activation functions. $V_{N,\theta}$ has 12 inputs (the components of $R$ and $p$, this is a projection of $H$) and 1 output, while $B_{N,\theta}$ has 18 inputs (components of $R$, $p$ and $P$) and 6 outputs, which are then put through an element-wise exponential function and turned into a diagonal 6 by 6 matrix to guarantee positive-definiteness of $B_{N,\theta}(H,P)\mathcal{I}$. 

The control law is of the form
\begin{equation}\label{eq:NN-control-law}
    W_{N,\theta}(H,P) = -\text{d}_{H}V_{N,\theta} - B_{N,\theta}(H,P)P\,.
\end{equation}

The parameters are optimized over $1000$ training epochs, using the ADAM optimizer with decay $\gamma = 0.999$, and a learning rate of $\eta = 0.001$. Additional parameters of the training are summarized in Appendix \ref{ssec:appB:Hyperparameters}, Table \ref{tab:appB:Hyperparameters_nonlinear_training}. 

The training progress is summarized in Figure \ref{fig:training_progress_NN-1}. 
It can be seen that the final loss of the nonlinear controller in Figure \ref{sfig:NN-1_loss} is equivalent to that of the quadratic controller Figure \ref{sfig:quadratic_loss}. In particular, the performance of a quadratic and a nonlinear controller for this scenario are close: the final angle and distance in Figures \ref{sfig:NN-1_th_final} and \ref{sfig:NN-1_d_final} are comparable to those of Figures \ref{sfig:quadratic_th_final} and \ref{sfig:quadratic_d_final}, respectively. 
The resulting controller's performance is shown in Figure \ref{fig:examples_NN-1}. Here the qualitative behavior shown in Figures \ref{sfig:NN-1_angle_3s}, \ref{sfig:NN-1_dist_3s} and \ref{sfig:NN-1_energy_components_average} resembles that of the quadratic case in Figures \ref{sfig:quadratic_angle_3s}, \ref{sfig:quadratic_dist_3s} and \ref{sfig:quadratic_energy_components_average}, respectively. 

\begin{figure*}[h]
\centering
\begin{tabular}{c c c}
  \begin{subfigure}{.33\textwidth}
    \scriptsize\includegraphics[width = \textwidth]{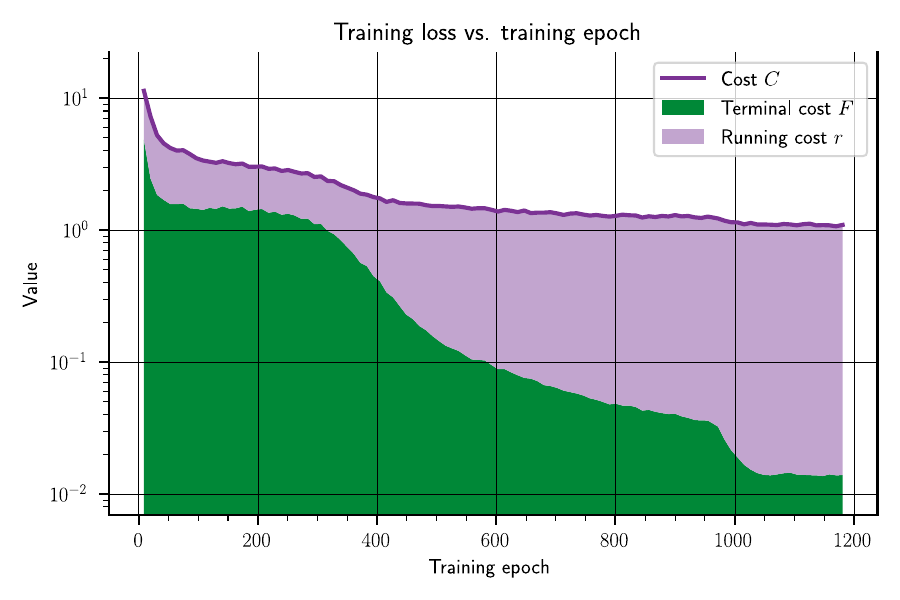}
    \subcaption{Loss, terminal loss and integral loss.}\label{sfig:NN-1_loss}
  \end{subfigure}
  &
  \begin{subfigure}{.33\textwidth}
    \scriptsize\includegraphics[width = \textwidth]{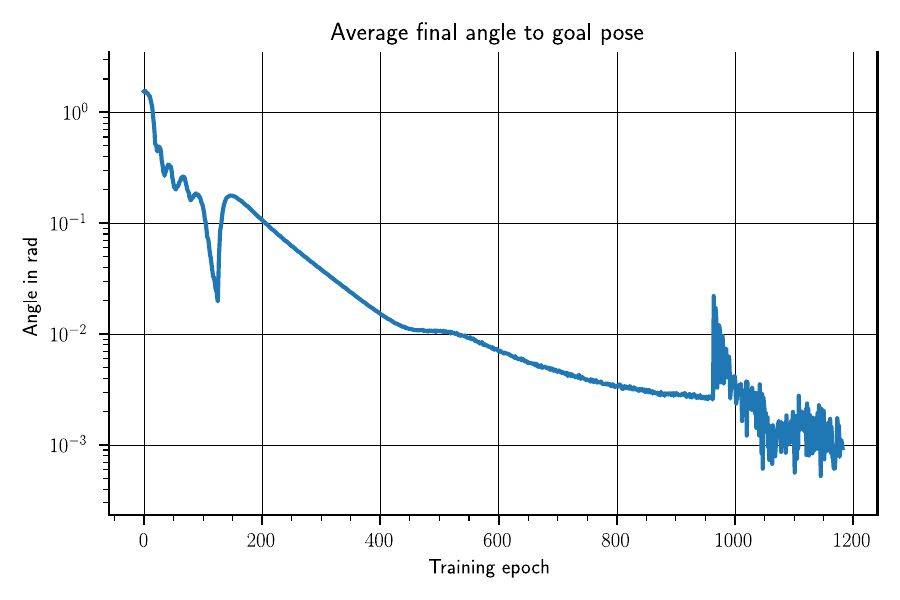}
    \subcaption{Average final angle towards goal pose.}\label{sfig:NN-1_th_final}
  \end{subfigure}
  &
  \begin{subfigure}{.33\textwidth}
    \scriptsize\includegraphics[width = \textwidth]{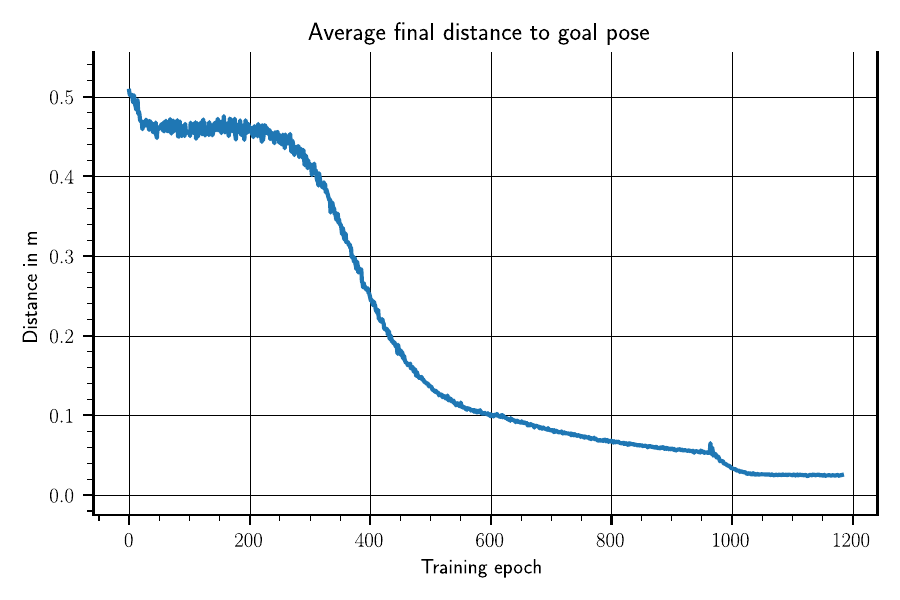}
    \subcaption{Average final distance towards goal pose.}\label{sfig:NN-1_d_final}
  \end{subfigure}
\end{tabular}
    \caption{Visualization of the training progress of the nonlinear controller characterized by $V_{N,\theta}$ and $B_{N,\theta}$. All figures show data averaged over 2048 sample trajectories at the given epoch, with initial conditions sampled from $\mathbb{P}$.}\label{fig:training_progress_NN-1}
\end{figure*}

\begin{figure*}[h]
\centering
\begin{tabular}{c c c}
  \begin{subfigure}{.33\textwidth}
    \scriptsize\includegraphics[width = \textwidth]{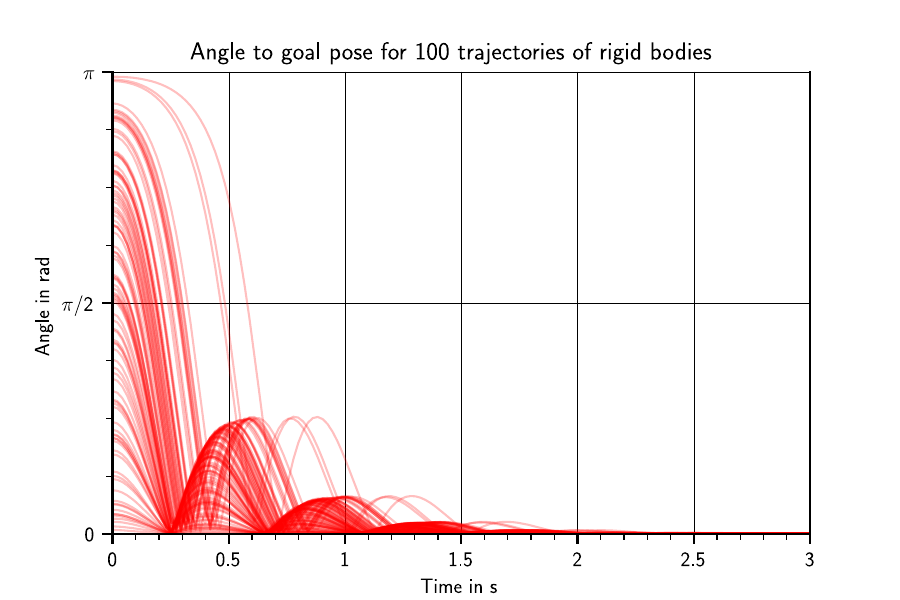}
    \subcaption{Angle towards goal pose over 3 seconds.}\label{sfig:NN-1_angle_3s}
  \end{subfigure}
  &
  \begin{subfigure}{.33\textwidth}
    \scriptsize\includegraphics[width = \textwidth]{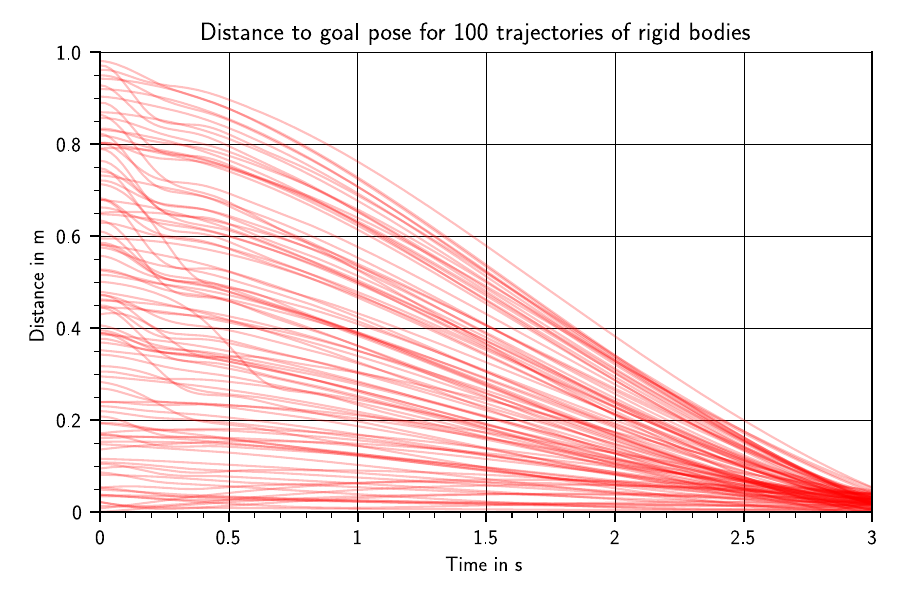}
    \subcaption{Distance towards goal pose over 3 seconds.}\label{sfig:NN-1_dist_3s}
  \end{subfigure}
  \begin{subfigure}{.33\textwidth}
    \scriptsize\includegraphics[width = \textwidth]{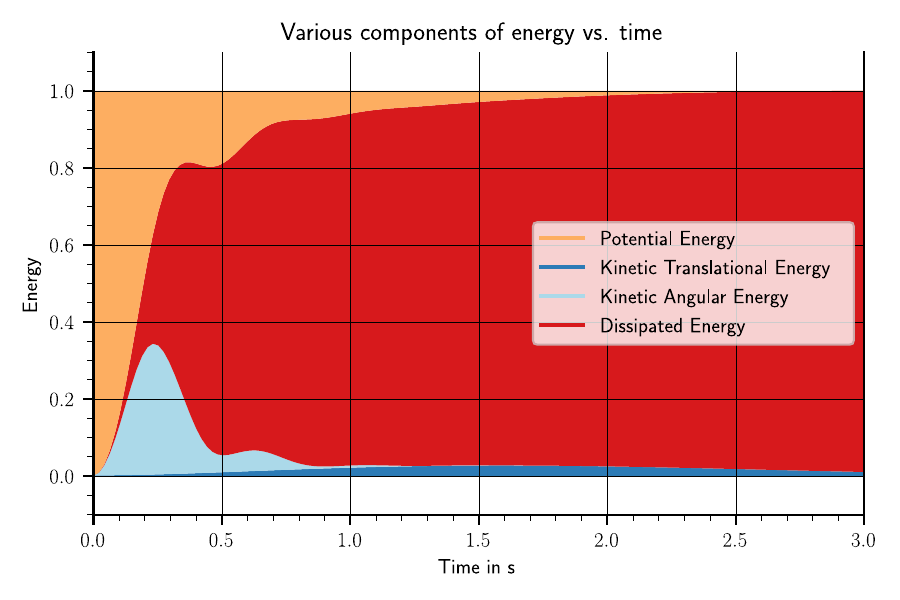}
    \subcaption{Average proportions of system energy over 3 seconds.}\label{sfig:NN-1_energy_components_average}
  \end{subfigure}
\end{tabular}
\caption{Visualization of the performance of the nonlinear controller characterized by $V_{N,\theta}$ and $B_{N,\theta}$, over 100 trajectories of rigid bodies with initial conditions sampled from $\mathbb{P}(\Gamma_0)$.}\label{fig:examples_NN-1}
\end{figure*}

\subsection{General potential shaping with gravity}\label{ssec:train:gravity}
We optimize an NN-parameterized potential and damping injection in a system with gravity in Section \ref{sssec:train:gravity:nonlinear_training_gravity}, and show the effect of an adapted target configuration in Section \ref{sssec:train:gravity:nonlinear_training_gravity_asymmetric}.

\subsubsection{Adapted running cost $r$:}
In the presence of a gravitational potential $V_g:SE(3)\rightarrow\mathbb{R}$ the momentum dynamics of a controlled rigid body \eqref{eq:dyn_SE3_dPdt} pick up an additional term $-\text{d}_H V_g$: 
\begin{align}
    \dot{P}_\theta(H,P) = \text{ad}_{\mathcal{I}^{-1}P}^\top P + W_\theta(H,P) - \text{d}_H V_g\,.
\end{align}
The gravitational potential is unbounded: it can therefore not be globally compensated for by a bounded potential $V_{N,\theta}$. To circumvent this issue, we separately implement gravity compensation $W_g(H) = \text{d}_{H}V_g$ by choosing the total control wrench as 
\begin{equation}
    W_\theta(H,P) = W_g(H) + W_{N,\theta}(H,P) \,.
\end{equation}
such that the momentum dynamics again read
\begin{align}
    \dot{P}_\theta(H,P) = \text{ad}_{\mathcal{I}^{-1}P}^\top P + W_{N,\theta}(H,P)\,.
\end{align}

To take gravity into account in the optimization, the only required adaptation is to use the adapted $W_\theta(H,P)$ in the running-cost \eqref{eq:specific_running_cost}. 

Minimizing the term $\|W_\theta(H,P)\|$ indirectly minimizes the required gravity compensation by reducing the total wrench exerted on the plant. Indeed, when $\|W_\theta\| = 0$ the dynamics are
\begin{equation}
    \dot{P}_\theta(H,P) = \text{ad}_{\mathcal{I}^{-1}P}^\top P - \text{d}_H V_g\,.
\end{equation}
Thus, for $\|W_\theta\| = 0$ the learned control-action $W_{N,\theta}(H,P)$ cancels the gravity compensation, such that the external gravitational potential is utilized to exert a force on the rigid body.

\subsubsection{Nonlinear potential and damping injection:}\label{sssec:train:gravity:nonlinear_training_gravity}
Here, we apply the ADAM optimizer with learning rate $\eta = 0.001$ and decay $\gamma = 0.999$ to optimize the nonlinear potential $V_{N,\theta}$ and damping $B_{N,\theta}$ for an adapted running cost. Additional parameters of the training are summarized in Appendix \ref{ssec:appB:Hyperparameters}, Table \ref{tab:appB:Hyperparameters_nonlinear_training_gravity}. The training progress is summarized in Figure \ref{fig:training_progress_NN-2-G}, and the resulting controller's performance is shown in Figure \ref{fig:examples_NN-2-G}. Notably, the results do not differ strongly from Figures \ref{fig:training_progress_NN-1} and \ref{fig:examples_quadratic}.

\begin{figure*}[h]
\centering
\begin{tabular}{c c c}
  \begin{subfigure}{.33\textwidth}
    \scriptsize\includegraphics[width = \textwidth]{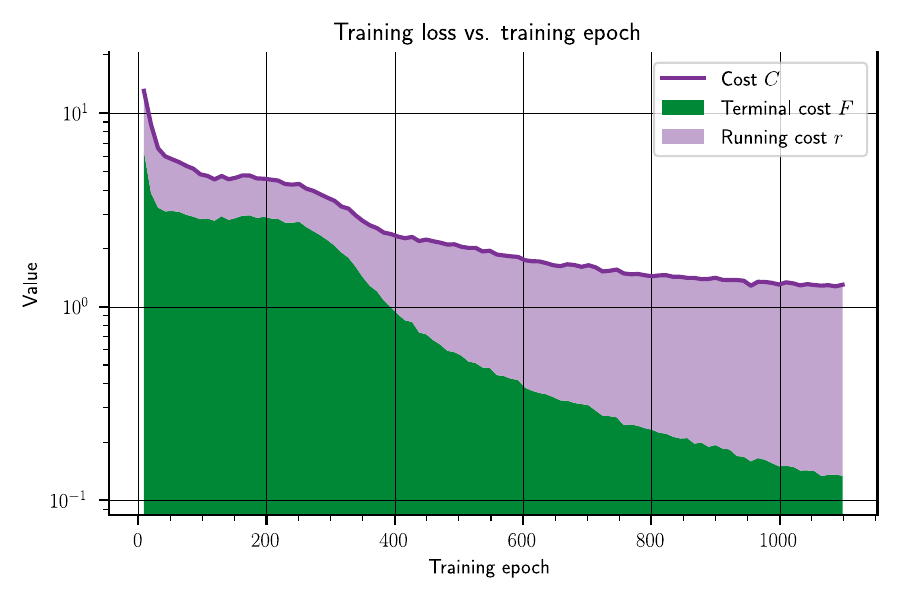}
    \subcaption{Loss, terminal loss and integral loss.}\label{sfig:NN-2-G_loss}
  \end{subfigure}
  &
  \begin{subfigure}{.33\textwidth}
    \scriptsize\includegraphics[width = \textwidth]{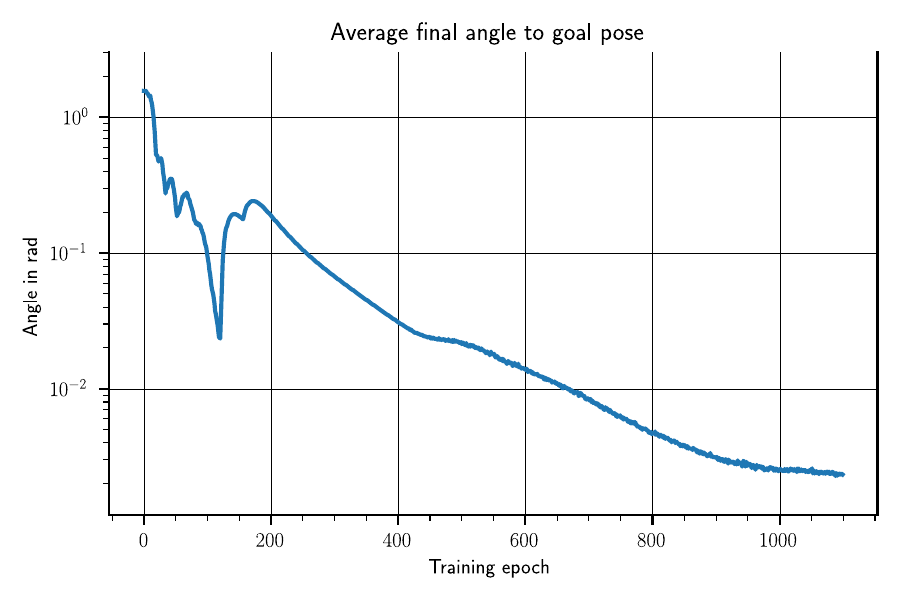}
    \subcaption{Average final angle towards goal pose.}\label{sfig:NN-2-G_th_final}
  \end{subfigure}
  &
  \begin{subfigure}{.33\textwidth}
    \scriptsize\includegraphics[width = \textwidth]{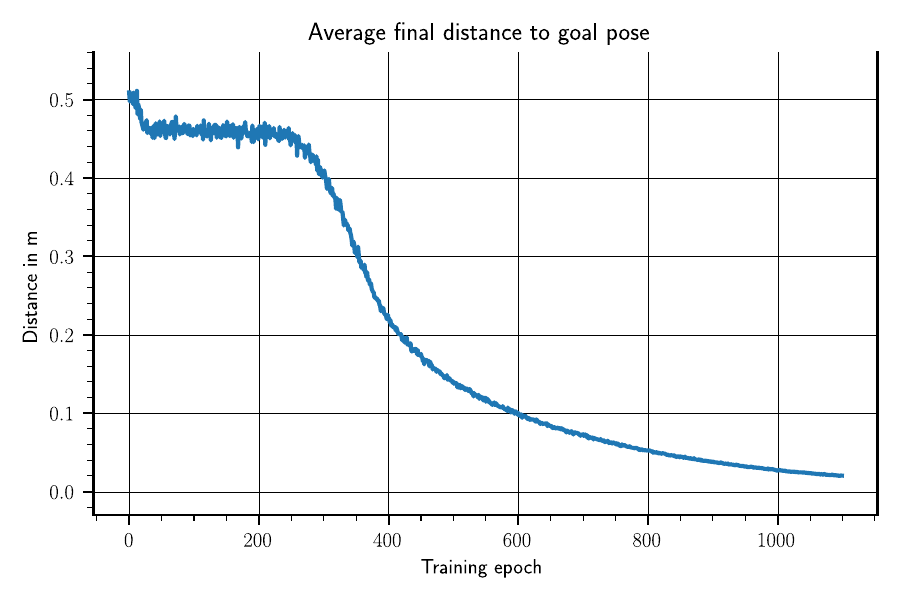}
    \subcaption{Average final distance towards goal pose.}\label{sfig:NN-2-G_d_final}
  \end{subfigure}
\end{tabular}
    \caption{Visualization of the training progress of the nonlinear controller characterized by $V_{N,\theta}$ and $B_{N,\theta}$, in the presence of gravity. All figures show data averaged over 2048 sample trajectories at the given epoch, with initial conditions sampled from $\mathbb{P}(\Gamma_0)$.}\label{fig:training_progress_NN-2-G}
\end{figure*}

\begin{figure*}[h]
\centering
\begin{tabular}{c c c}
  \begin{subfigure}{.33\textwidth}
    \scriptsize\includegraphics[width = \textwidth]{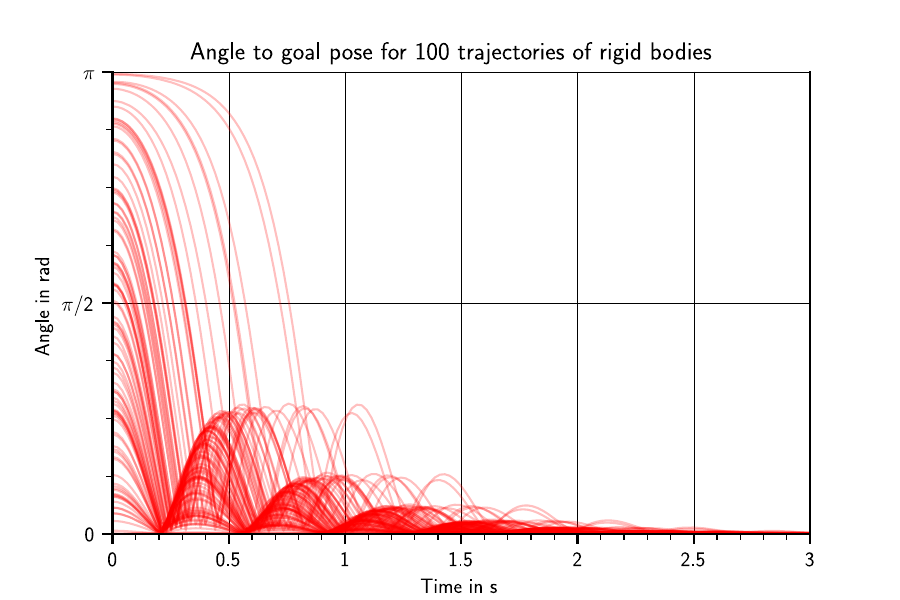}
    \subcaption{Angle towards goal pose over 3 seconds.}\label{sfig:NN-2_angle_3s}
  \end{subfigure}
  &
  \begin{subfigure}{.33\textwidth}
    \scriptsize\includegraphics[width = \textwidth]{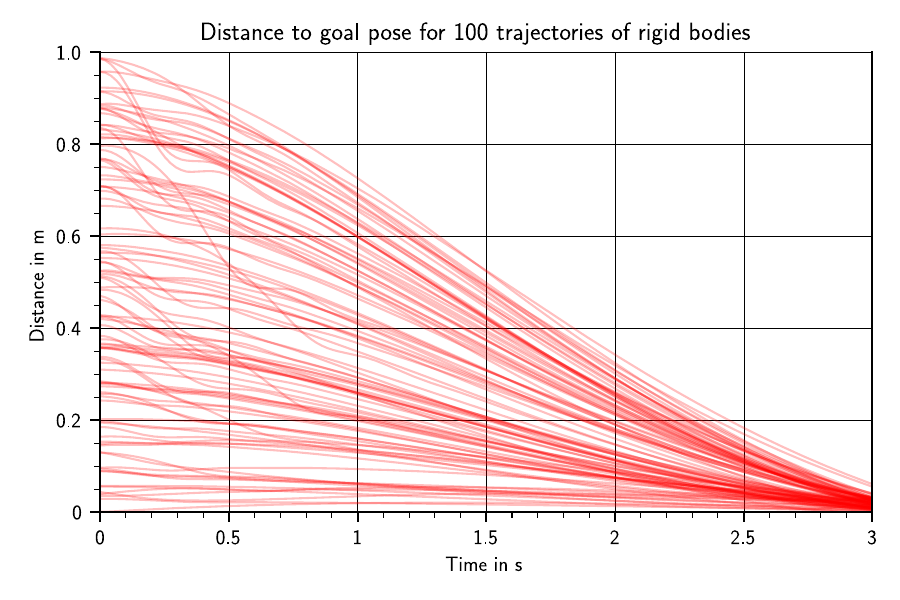}
    \subcaption{Distance towards goal pose over 3 seconds.}\label{sfig:NN-2_dist_3s}
  \end{subfigure}
  &
  \begin{subfigure}{.33\textwidth}
    \scriptsize\includegraphics[width = \textwidth]{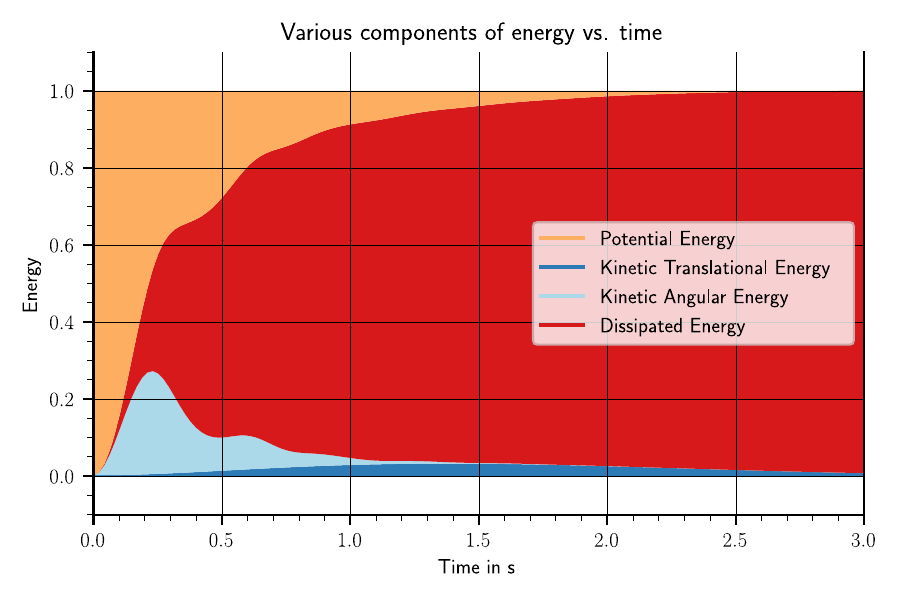}
    \subcaption{Average proportions of system energy over 3 seconds.}\label{sfig:NN-2_energy_components_average}
  \end{subfigure}
\end{tabular}
\caption{Visualization of the performance of the nonlinear controller characterized by $V_{N,\theta}$ and $B_{N,\theta}$, in the presence of gravity. The results show 100 trajectories of rigid bodies with initial conditions sampled from $\mathbb{P}(\Gamma_0)$.}\label{fig:examples_NN-2-G}
\end{figure*}

\subsubsection{Asymmetric initial distribution:}\label{sssec:train:gravity:nonlinear_training_gravity_asymmetric}
To better highlight the influence of the adapted running cost, and how the optimization in the presence of gravity differs from an optimization in the absence of gravity, an initial distribution asymmetric about the goal pose $H_F$ \eqref{eq: prior_dist} is introduced by choosing $$H_F =
\begin{bmatrix}
    I & p_F \\ 0 & 1
\end{bmatrix} $$ with $p_F = (0,0,-1)^T$. 

The parameters of this training coincide with those of the symmetric scenario, and are likewise summarized in Appendix \ref{ssec:appB:Hyperparameters}, Table \ref{tab:appB:Hyperparameters_nonlinear_training_gravity}.
The training progress is summarized in Figure \ref{fig:training_progress_NN-3-AG}, and the resulting controller's performance is shown in Figure \ref{fig:examples_NN-3-AG}.

\begin{figure*}[h]
\centering
\begin{tabular}{c c c}
  \begin{subfigure}{.33\textwidth}
    \scriptsize\includegraphics[width = \textwidth]{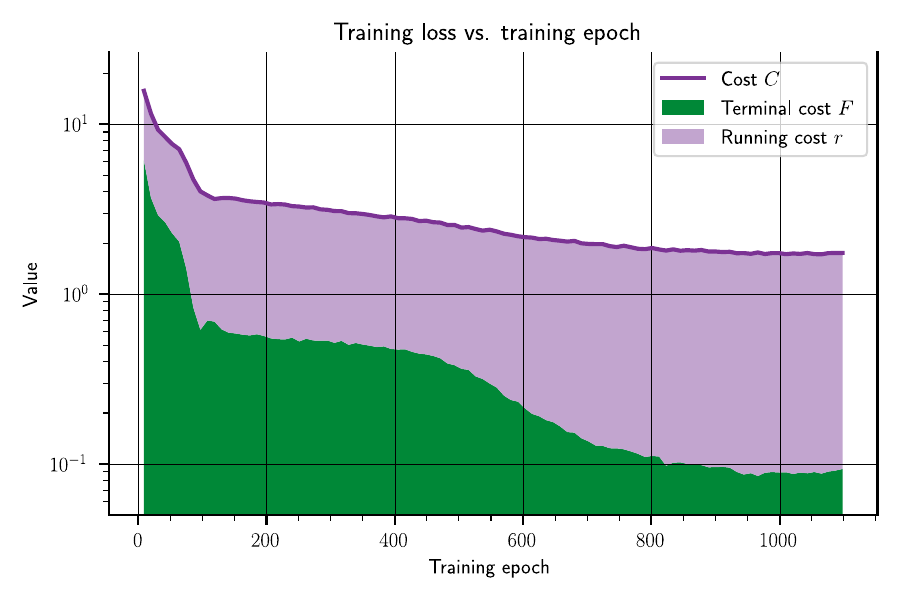}
    \subcaption{Loss, terminal loss and integral loss.}\label{sfig:NN-3-AG_loss}
  \end{subfigure}
  &
  \begin{subfigure}{.33\textwidth}
    \scriptsize\includegraphics[width = \textwidth]{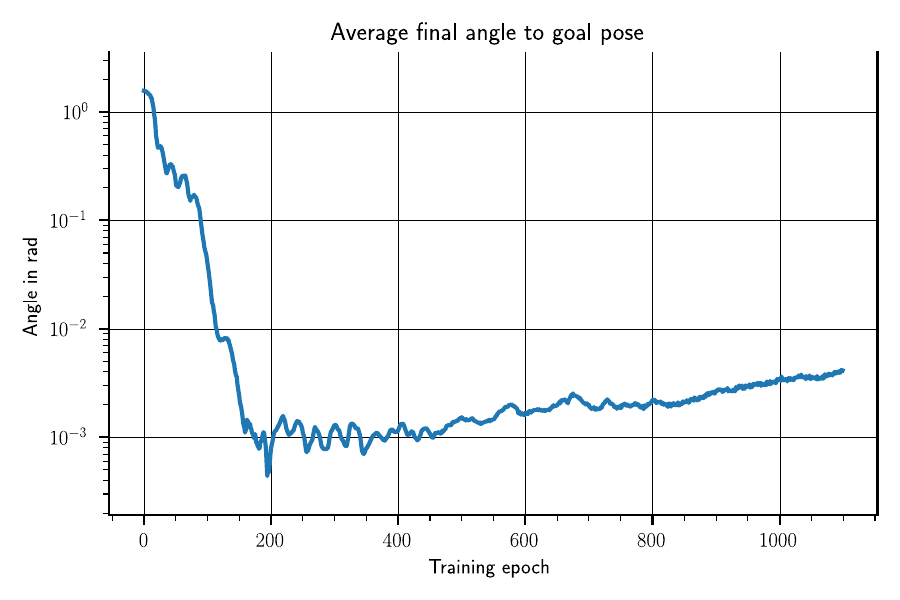}
    \subcaption{Average final angle towards goal pose.}\label{sfig:NN-3-AG_th_final}
  \end{subfigure}
  &
  \begin{subfigure}{.33\textwidth}
    \scriptsize\includegraphics[width = \textwidth]{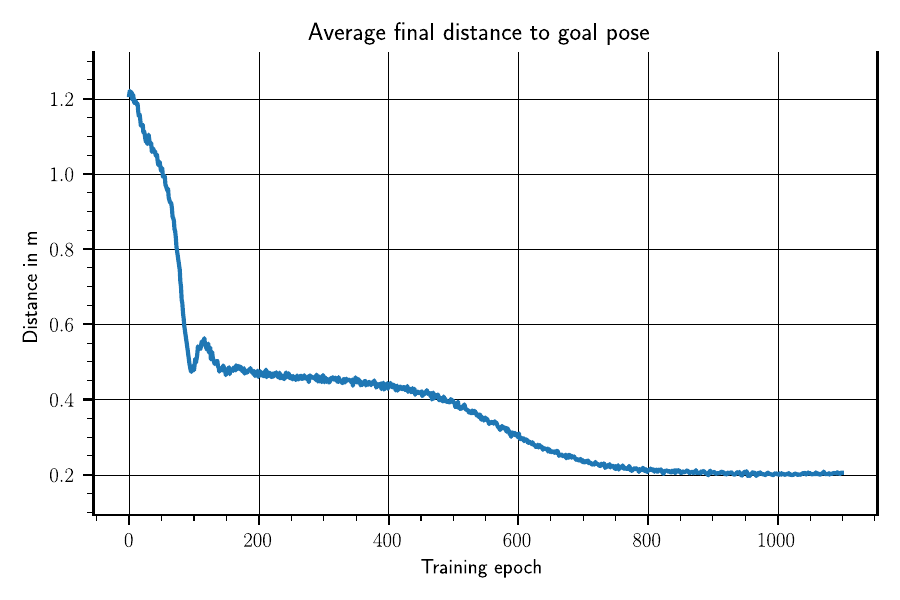}
    \subcaption{Average final distance towards goal pose.}\label{sfig:NN-3-AG_d_final}
  \end{subfigure}
\end{tabular}
    \caption{Visualization of the training progress of the nonlinear controller characterized by $V_{N,\theta}$ and $B_{N,\theta}$, in the presence of gravity and with an initial distribution whose mean is above the target position. All figures show data averaged over 2048 sample trajectories at the given epoch, with initial conditions sampled from $\mathbb{P}(\Gamma_0)$.}\label{fig:training_progress_NN-3-AG}
\end{figure*}

\begin{figure*}[h]
\centering
\begin{tabular}{c c c}
  \begin{subfigure}{.33\textwidth}
    \scriptsize\includegraphics[width = \textwidth]{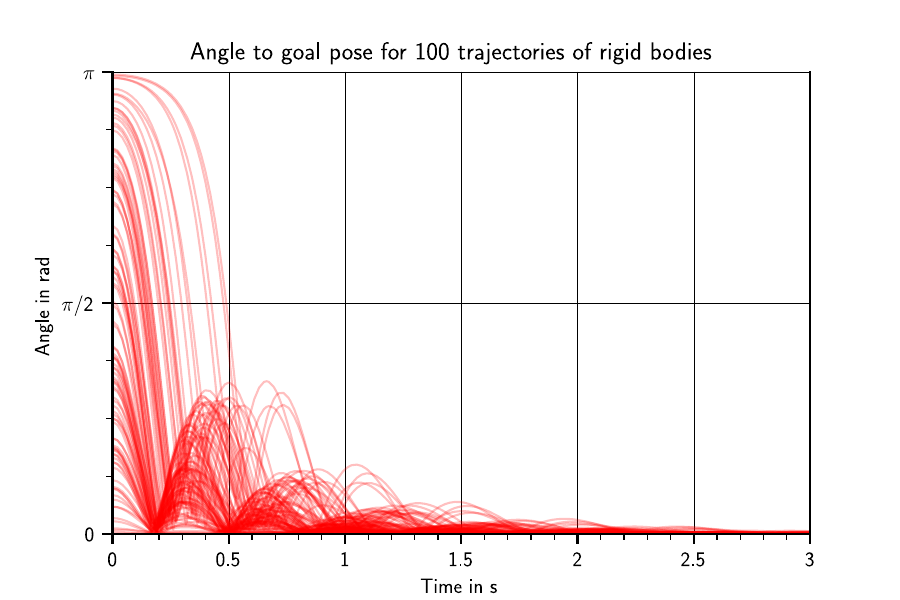}
    \subcaption{Angle towards goal pose over 3 seconds.}\label{sfig:NN-3_angle_3s}
  \end{subfigure}
  &
  \begin{subfigure}{.33\textwidth}
    \scriptsize\includegraphics[width = \textwidth]{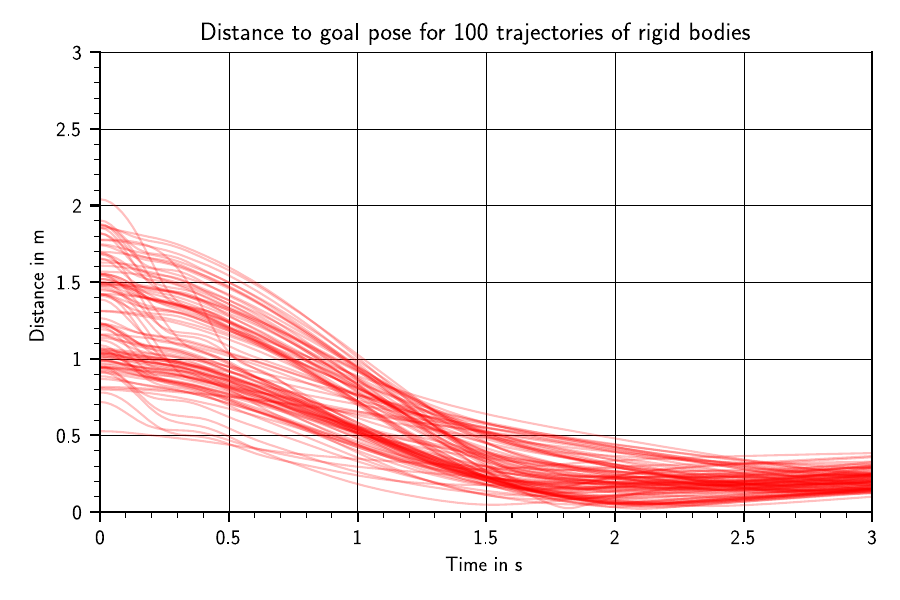}
    \subcaption{Distance towards goal pose over 3 seconds.}\label{sfig:NN-3_dist_3s}
  \end{subfigure}
  &
  \begin{subfigure}{.33\textwidth}
    \scriptsize\includegraphics[width = \textwidth]{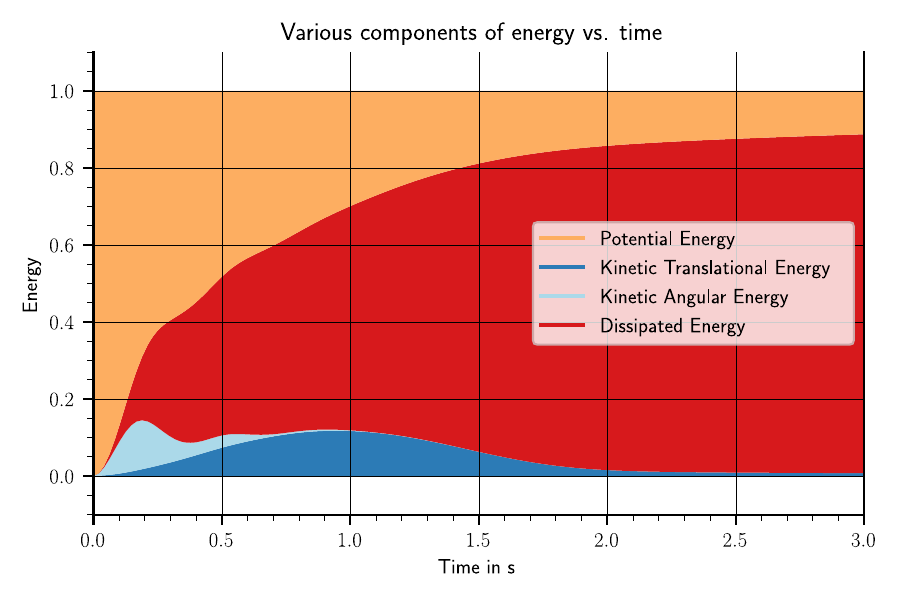}
    \subcaption{Average proportions of system energy over 3 seconds.}\label{sfig:NN-3_energy_components_average}
  \end{subfigure}
\end{tabular}
\caption{Visualization of the performance of the nonlinear controller characterized by $V_{N,\theta}$ and $B_{N,\theta}$, in the presence of gravity and with an initial distribution whose mean is above the target position. The results show 100 trajectories of rigid bodies with initial conditions sampled from $\mathbb{P}(\Gamma_0)$.}\label{fig:examples_NN-3-AG}
\end{figure*}

\section{Discussion}\label{sec:discussion}

\subsection{Neural ODEs on Lie groups} 

The proposed formulation of neural ODEs on Lie groups immediately applies to arbitrary matrix Lie groups, where parameterized maps can be learned with a global validity. 
The optimization of Neural ODEs on Lie groups by the gradient descent via the generalized adjoint method is a scalable approach. The key aspects that contribute to this scalability are:
First, the generalized adjoint method on Lie groups preserves the memory efficiency of the generalized adjoint method used for neural ODEs on $\mathbb{R}^k$. 
Second, the formulation of the adjoint dynamics at the algebra level achieves a dimensionality reduction with respect to extrinsic formulations of \cite{Duong2021_nODE_SE3,Falorsi2020}. Finally, this formulation at the algebra level also alleviates the need for chart-switches of the adjoint state, and it allows for the use of the compact expression \eqref{eq:grad_G} of the gradient, bypassing the need for gradient computations in local charts. 

The work can be generalized further: Theorem \ref{thm:adjoint_method_matrix_lie_group} assumes the cost to be of the form \eqref{eq:single_trajectory_cost}, while the derivation in Appendix \ref{app:nodes_on_manifolds} in principle allows for a more general choice of cost that may be of interest in e.g. learning of periodic trajectories \citep{Wotte2023}.
The accompanying code is currently written specifically for the Lie group $SE(3)\times \mathbb{R}^6$, and future work will produce code that is applicable to other matrix Lie groups as well.

\subsection{Optimal Potential Shaping}
The optimization of an NN-parameterized potential and damping injection was successful and the large number of parameters used in the optimization confirms that it scales to the large parameter scenario. The optimization was also successful when including gravity in a nonlinear running cost. Stability was guaranteed by design, by implementing the requirements of Theorem \ref{thm:stability} on the level of architecture and activation functions. As a further advantage the resulting controller is global on $SE(3)$, as opposed to only being applicable in a limited chart-region. 

Regarding limitations of the approach, the numerical stability of the adjoint method on $SE(3)$ was observed to strongly depend on the smoothness of the running cost, which suggests added value in considering different Lie group integrators that accommodate this lack of smoothness. 
Lastly, while the structure of the presented controller is highly interpretable and the various components of the energy are readily visualized, the space of possible initial conditions and trajectories remains large, and the high dimensional state-space obscures low-level properties and a deep understanding of the eventual controller, beyond safety guarantees and numerical verification of stability. 

Alternative choices for the final and running costs, as well as the weights in these costs are worth investigating. The design space of possible controllers is also large and other control architectures may be advantageous.
In future work the controller will be applied to a real drone, and other cost functions and control structures will be investigated.

\section{Conclusion}\label{sec:conclusion}


Lie groups are ubiquitous in engineering, and so are dynamic systems on Lie groups. We proposed a method for dynamics optimization that works on arbitrary, finite dimensional Lie groups and for a large class of cost functions. The resulting method is highly scalable, and more compact than alternative manifold formulations.
The key steps in the formulation related to using canonical Lie group structure to create a compact gradient descent algorithm: we phrased the generalized adjoint method at the Lie algebra level, we utilize a compact expression for the gradient as an element of the dual to the Lie algebra, and we use a generic Lie group integrator for dynamics integration. 
The method was successfully applied to optimize a controller for a rigid body that is globally valid on the Lie group SE(3). A key aspect of choosing the class of controllers was stability by design, which guided the architecture of the neural nets that parameterize the potential energy shaping and damping injection controller.











\begin{dci} 
    The authors declare that they have no conflicts of interest.
\end{dci}

\begin{funding} 
    This research was supported by the PortWings project funded by the European Research Council [Grant Agreement No. 787675].
\end{funding}

\theendnotes

\begin{sm}
   \appendix 
   \section{The generalized adjoint method on Lie groups}\label{app:A}

In this Appendix the generalized adjoint method on matrix Lie groups (Theorem \ref{thm:adjoint_method_matrix_lie_group}) is derived in four steps. 


\subsection{Notation}
Given functions $\Phi_1 \in C^k(\mathcal{M},\mathcal{N})$, $\Phi_2 \in C^k(\mathcal{N},\mathcal{P})$, their composition is denoted $\Phi_2\circ \Phi_1 \in C^k(\mathcal{M},\mathcal{P})$.
$\Omega^k(\mathcal{M})$ denotes the set of $k$-forms over $\mathcal{M}$. 
The exterior derivative is denoted as $\text{d}:\Omega^k(\mathcal{M})\rightarrow \Omega^{k+1}(\mathcal{M})$, and the wedge product as $\wedge:\Omega^{q}(\mathcal{M})\times \Omega^{p}(\mathcal{M})\rightarrow \Omega^{q+p}(\mathcal{M})$. 
Given a product manifold $\mathcal{M}\times\mathcal{N}$, a function $V\in C^1(\mathcal{M}\times\mathcal{N},\mathbb{R})$, with $x\in \mathcal{M}$, $y\in\mathcal{N}$, denote by $\big(\text{d}_x V\big)(y)\in T^*_x\mathcal{M}$ the partial gradient at $x\in\mathcal{M}$.
Given a vector field $f\in \Gamma(T\mathcal{M})$ and a $k$-form $\omega \in \Omega^k(\mathcal{M})$ denote by $\i_f:\Omega^k(\mathcal{M}) \rightarrow \Omega^{k-1}(\mathcal{M})$ the insertion operator $\i_f(\omega) := \omega(f)$. Further denote by $\mathcal{L}_f:\Omega^k(\mathcal{M})\rightarrow\Omega^k(\mathcal{M})$ the Lie derivative with respect to $f$.

\subsection{Hamiltonian systems on Lie groups}\label{app:background_Hamiltonian_systems_on_Lie_groups}
We briefly review Hamiltonian systems on manifolds, on Lie groups and on matrix Lie groups. For a detailed introduction to Hamiltonian systems see e.g., \cite{Marsden1999}. 

Define a symplectic manifold as $(\mathcal{M},\omega)$, with $\mathcal{M}$ a manifold and $\omega \in \Omega^2(\mathcal{M})$ the symplectic form. For Hamiltonian dynamics on manifolds, we are interested in the case where $\mathcal{M} = T^*\mathcal{Q}$ is the cotangent bundle of some manifold $\mathcal{Q}$. Specializing to Lie groups, we investigate the case where $\mathcal{Q} = G$ for some Lie group $G$.


Given coordinate maps $q^i:\mathcal{Q}\rightarrow\mathbb{R}$ on $\mathcal{Q}$ and induced coordinates $p_i$ in the basis $\text{d}q^i$ on $T_q^*\mathcal{Q}$, the symplectic form is canonically defined as $\omega = \text{d}\theta = \text{d}p_i \wedge \text{d}q^i$, with $\theta = p_i \text{d}q^i$ the coordinate independent tautological one-form. 
Then a Hamiltonian $H \in C^1(T^*\mathcal{Q},\mathbb{R})$ implicitly defines a unique vector field $X_H \in \Gamma(T^*\mathcal{Q})$ by demanding that
\begin{equation}
    \text{d}H(Y) = \omega(X_H, Y) 
\end{equation}
holds for any $Y \in \Gamma(T^*\mathcal{Q})$. In coordinates induced by $q^i, p_i$, the corresponding vector field $X_H$ has the components 
\begin{align}
    \dot{q}^i &= \frac{\partial H}{\partial p_i} \,, \label{eq:Ham_q} \\
    \dot{p}_i &= -\frac{\partial H}{\partial q^i} \label{eq:Ham_p} \,.
\end{align}

On a Lie group $G$ a different formulation is possible: the group structure allows the identification $T^*G \equiv G \times \mathfrak{g}^*$, where $\mathfrak{g}^* = T^*_e G$ is the dual of the Lie algebra $\mathfrak{g}$. 
In the left identification, the left-translation map $L_g:G\rightarrow G$ associates any cotangent space $T^*_g G$ with $\mathfrak{g}^*$ by the pullback ${L_g}^*:T_g^*G \rightarrow \mathfrak{g}^*$. 
Using this identification, the Hamiltonian $\mathcal{H}:G\times \mathfrak{g}^* \rightarrow \mathbb{R}$ is defined in terms of $H:T^*G\rightarrow \mathfrak{g}$ as
\begin{equation}\label{eq:HamLie_to_Ham}
    \mathcal{H}(g,\bar{P}) = H\big(g, (L_{g^{-1}}^*\bar{P})\big)\,.
\end{equation}
Then the Hamiltonian vector-field as a section in $\Gamma(TG\times T\mathfrak{g}^*)$ is:
\begin{align} 
    \dot{g} &= {L_{g}}_*(\text{d}_{\bar{P}} \mathcal{H})\,, \label{eq:Lie_Ham} \\
    \dot{\bar{P}} &= - \bar{\text{d}}_{g} \mathcal{H} + \text{ad}_{\text{d}_{\bar{P}} \mathcal{H}}^* \bar{P} \label{eq:Lie_Ham_Co} \,,
\end{align}
where $\text{d}_{\bar{P}} \mathcal{H} \in T^*_{\bar{P}} \mathfrak{g}^* \cong \mathfrak{g}^*$, $\bar{\text{d}}_{g} \mathcal{H} \in \mathfrak{g}^*$ as in Equation \eqref{eq:bar_grad_G}, $\text{ad}^*:\mathfrak{g}\times\mathfrak{g}^*\rightarrow\mathfrak{g}^*$ is the dual of the adjoint map $\text{ad}$, defined by $(\text{ad}^*_{\tilde{A}}\bar{P})(\tilde{B}) = \bar{P}(\text{ad}_{\tilde{A}}\tilde{B})$. 

For a matrix Lie group the equations \eqref{eq:Lie_Ham} and \eqref{eq:Lie_Ham_Co} read, in terms of component matrices:
\begin{align} 
    \dot{g} &= g \Lambda (\frac{\partial \mathcal{H}}{\partial P} )\,, \label{eq:Lie_Ham_Matrix} \\
    \dot{P} &= - \text{d}_g \mathcal{H} + \text{ad}_{\frac{\partial\mathcal{H}}{\partial P} }^\top P \label{eq:Lie_Ham_Co_Matrix} \,.
\end{align}
Here $P\in\mathbb{R}^n$ and $\frac{\partial}{\partial P}$ is the usual partial derivative, $\text{d}_g \mathcal{H} \in \mathbb{R}^n$ is interpreted as in \eqref{eq:grad_G} and $\Lambda:\mathbb{R}^n\rightarrow\mathfrak{g}$ as in \eqref{eq:abstract_tilde}.


\subsection{The adjoint sensitivity}\label{ssec:app_adjoint_sensitivity}

Given are a manifold $\mathcal{M}$, a Lipschitz vector field $f\in \Gamma(T\mathcal{M})$, the associated flow $\Psi_{f}^t:\mathcal{M}\rightarrow \mathcal{M}$ and a differentiable scalar-valued function $C:\mathcal{M}\rightarrow \mathbb{R}$. 
A solution $x(t) \in \mathcal{M}$ of the dynamics $f$ is given by
\begin{equation}
    x(t) := \Psi_f^t(x_0) \,,\; x_0 \in \mathcal{M}\,.
\end{equation}
We are interested in computing the gradient
\begin{equation}\label{eq:grad_over_flow_1}
    \text{d} (C\circ \Psi_f^T)(x_0) = (\Psi_f^T)^* \text{d}C \big(x(T)\big) \,,
\end{equation}
whose expression is given by Theorem \ref{thm:adjoint_sensitivity_manifold} \citep{Falorsi2020,BulloLewis2005_Supplementary}.
To the best of the authors' knowledge, the presented derivation is a contribution with respect to the existing literature and is an alternative to the one presented in \cite{BulloLewis2005_Supplementary}.

\begin{theorem}[Adjoint sensitivity on manifolds]\label{thm:adjoint_sensitivity_manifold}
    The gradient of a function $C\circ\Psi_f^T$ is 
    \begin{equation}\label{eq:grad_over_flow_2}
        \text{d} (C\circ \Psi_f^T) = \lambda(0)\,,
    \end{equation}
    where $\lambda(t) \in T^*_{x(t)}\mathcal{M}$ is the adjoint state. In a local chart $(U,X)$ of $\mathcal{M}$ with induced coordinates on $TU$ and $T^*U$, $x(t)$ and $\lambda(t)$ satisfy the dynamics
    \begin{align}
        \dot{x}^j &= f^j(x)\,,\; x(0) = x_0 \,, \label{eq:adjoint_sensitivity_manifold_x}\\
        \dot{\lambda}_i &= - \lambda_j \frac{\partial}{\partial x^i} f^j(x) \,, \; \lambda(T) = \text{d}C\big(x(T)\big) \label{eq:adjoint_sensitivity_manifold}\,.
    \end{align}
\end{theorem}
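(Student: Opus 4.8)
The plan is to realize the adjoint state intrinsically as the pullback of the terminal gradient co-vector $\text{d}C(x(T))$ along the remaining flow, and then to differentiate that pullback using the Lie-derivative transport identity. Concretely, I would define the co-vector field $\alpha_s := (\Psi_f^s)^* \text{d}C$ (the closed one-form $\text{d}C$ transported by the time-$s$ flow) and set
\[
\lambda(t) := \big[(\Psi_f^{T-t})^* \text{d}C\big]\big(x(t)\big) = \alpha_{T-t}\big(x(t)\big) \in T^*_{x(t)}\mathcal{M}.
\]
Since $\Psi_f^{T-t}$ carries $x(t)=\Psi_f^t(x_0)$ to $x(T)$, this lands in the correct cotangent space. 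The two boundary values are then immediate: at $t=T$ the flow is the identity, so $\lambda(T)=\text{d}C(x(T))$, matching the terminal condition; at $t=0$ one has $\lambda(0)=\big[(\Psi_f^T)^*\text{d}C\big](x_0)=\text{d}(C\circ\Psi_f^T)(x_0)$, which is exactly the gradient \eqref{eq:grad_over_flow_1} we wish to compute (using that the pullback of an exact form is exact). Thus the whole theorem reduces to showing that this $\lambda(t)$ solves the stated backward ODE.

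To obtain the ODE I would differentiate $\lambda(t)=\alpha_{T-t}(x(t))$ in $t$, which by the chain rule produces two contributions: the variation of the pullback parameter $s=T-t$, and the variation of the moving base point along $\dot x=f$. For the first, the fundamental transport identity $\frac{d}{ds}(\Psi_f^s)^*\text{d}C = (\Psi_f^s)^*\mathcal{L}_f\,\text{d}C = \mathcal{L}_f\alpha_s$ supplies the $s$-derivative, so this term contributes $-\mathcal{L}_f\alpha_{T-t}$, the minus sign coming from $s=T-t$. For the second, moving the evaluation point along $f$ contributes, in the chart coordinates $(U,X)$, the term $f^j\partial_j(\alpha_{T-t})_i$.

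The key computation is to add these contributions and invoke the coordinate expression of the Lie derivative of a one-form, $[\mathcal{L}_f\alpha]_i = f^j\partial_j\alpha_i + \alpha_j\partial_i f^j$, which I would verify through Cartan's formula $\mathcal{L}_f = i_f\,\text{d} + \text{d}\,i_f$. The transport contribution then reads $-f^j\partial_j(\alpha_{T-t})_i - (\alpha_{T-t})_j\partial_i f^j$, and summing with the base-point term $+f^j\partial_j(\alpha_{T-t})_i$ the derivative-of-$\alpha$ pieces cancel exactly, leaving $\dot\lambda_i = -(\alpha_{T-t})_j\partial_i f^j = -\lambda_j\partial_i f^j$, which is precisely \eqref{eq:adjoint_sensitivity_manifold}.

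The main obstacle I anticipate is bookkeeping rather than conceptual: cleanly separating the two sources of $t$-dependence (pullback parameter versus moving base point) and confirming the exact cancellation, while ensuring the transport identity is applied to the co-vector field $\alpha_s$ globally and only afterward restricted to the trajectory. A secondary technical point is that $(\Psi_f^s)^*\text{d}C$ is a priori defined only on the (possibly shrinking) domain of the flow, so the argument is genuinely \emph{local} — harmless here, since everything is evaluated along the single integral curve $x(t)$ and the statement is already chart-based. If $f$ is taken time-dependent, as in the dynamics \eqref{eq:dyn_Lie}, I would replace the one-parameter flow by the corresponding two-parameter flow, or equivalently augment the state by time to render $f$ autonomous; the transport identity and the cancellation are unaffected.
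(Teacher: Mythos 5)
Your proposal is correct and follows essentially the same route as the paper: the adjoint state is realized as the pullback of $\text{d}C$ along the remaining flow $\Psi_f^{T-t}$, the boundary values $\lambda(T)=\text{d}C\big(x(T)\big)$ and $\lambda(0)=\text{d}(C\circ\Psi_f^T)(x_0)$ follow from naturality of the pullback, and the backward ODE is obtained from the coordinate expression of the Lie derivative of a one-form via Cartan's formula, with the same cancellation of the $\partial_k\lambda_i$ terms. If anything your bookkeeping is slightly tighter than the paper's: where the paper replaces the curve $\lambda(t)$ by a time-independent one-form satisfying $\mathcal{L}_f\lambda=0$ (an extension it does not actually construct), you work with the explicitly time-dependent family $\alpha_s=(\Psi_f^s)^*\text{d}C$ and the transport identity $\tfrac{d}{ds}\alpha_s=\mathcal{L}_f\alpha_s$, which separates the two sources of $t$-dependence cleanly and avoids that gap.
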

\begin{proof}

Define the adjoint state $\lambda(t) \in T^*_{x(t)}\mathcal{M}$ as
\begin{align}
    \lambda(t) &:= (\Psi_f^{T-t})^* \lambda_T \,,\; \lambda_T\in T_{x(T)}^*\mathcal{M} \,. \label{eq:def_adjoint_state_mfd}
\end{align}
Let $\lambda_T = \text{d}C\big(x(T)\big)$, then Equation \eqref{eq:grad_over_flow_2} is recovered:
\begin{equation}
    \lambda(0) = (\Psi_f^{T})^*\lambda_T = (\Psi_f^{T})^* \text{d}C \big(x(T)\big) = (\text{d}C\circ\Psi^T_f)(x_0)\,,
\end{equation}
where the final step uses Equation \eqref{eq:grad_over_flow_1}. 

A derivation of the dynamics governing $\lambda(t)$ constitutes the remainder of this proof.
To this end, note that the Lie derivative of $\lambda(t)$ is
\begin{align}
    \mathcal{L}_f \lambda(t) 
    &= \frac{d}{ds}\big((\Psi^s_f)^* \lambda(t+s)\big)_{s=0} \\ 
    &= \frac{d}{ds}\big((\Psi^s_f)^* (\Psi_f^{T-t-s})^* \lambda_T\big)_{s=0} \nonumber \\
    &= \frac{d}{ds}\big((\Psi_f^{T-t})^* \lambda_T\big)_{s=0} = 0\nonumber \,.
\end{align}
Instead of a curve $\lambda(t)$, consider a 1-form $\lambda\in \Omega^1(\mathcal{M})$ (denoted as $\lambda$ by an abuse of notation) that satisfies $\mathcal{L}_f \lambda = 0$. This allows to apply Cartan's formula, which we express in a local chart:
\begin{align}\label{app:eq:cartan}
    \mathcal{L}_f \lambda = \; & \text{d}(\i_f \lambda) + \i_f \text{d}\lambda  \\
    = & \frac{\partial}{\partial x^i} (\lambda_j f^j) \text{d}x^i + (\frac{\partial}{\partial x^k} \lambda_i) (\text{d}x^k \wedge \text{d} x^i)(f^j \frac{\partial}{\partial x^j}) \nonumber \\
    =\,  &\frac{\partial}{\partial x^i} (\lambda_j f^j) \text{d}x^i + (\frac{\partial}{\partial x^k} \lambda_i) f^k \text{d} x^i - (\frac{\partial}{\partial x^k} \lambda_i) f^i \text{d}x^k \nonumber \\
    =\, & \lambda_j \frac{\partial}{\partial x^i}  f^j \text{d}x^i + (\frac{\partial}{\partial x^k} \lambda_i) f^k \text{d} x^i = 0 \,. \nonumber
\end{align}
In terms of components, one obtains the partial differential equation
\begin{equation} \label{app:eq:cartan-components}
    \lambda_j \frac{\partial}{\partial x^i}  f^j + f^k \frac{\partial}{\partial x^k} \lambda_i = 0 \,.
\end{equation}
Impose that $\lambda(t) = \lambda\big(\Psi^t_f(x_0)\big)$, then 
\begin{equation}\label{app:eq:dlambda_dt}
    \dot{\lambda}_i = f^k\frac{\partial}{\partial x^k} \lambda_i\,.
\end{equation}
Combining Equations \eqref{app:eq:cartan-components} and \eqref{app:eq:dlambda_dt} leads to Equation \eqref{eq:adjoint_sensitivity_manifold}:
\begin{equation}
    \dot{\lambda}_i = - \lambda_j \frac{\partial}{\partial x^i}  f^j \,.
\end{equation} 
\qed
\end{proof}

\begin{remark}
    Theorem \ref{thm:adjoint_sensitivity_manifold} also holds for time-dependent dynamics $f(x,t)$. This is because time-dependent dynamics $f(x,t)$ on $\mathcal{M}$ can be recast as time-indepent dynamics on a space-time product manifold $\mathcal{N} = \mathcal{M}\times\mathbb{R}$, where we identify $ y = (x,t) \in \mathcal{N}$ with $x\in\mathcal{M}$ and $t \in \mathbb{R}$ and define dynamics $f^{\mathcal{N}}(y) = f(x,t) + \frac{\partial}{\partial t}$. The caveat is that $t$ then needs to be treated as a coordinate on $\mathcal{N}$, and should not be confused with the elapsed time $s$ in the flow $\Psi^s_{f^\mathcal{N}}:\mathcal{N}\rightarrow\mathcal{N}$.
\end{remark}

Theorem \ref{thm:adjoint_sensitivity_manifold} can be recast into a Hamiltonian form:

\begin{lemma}[Hamiltonian form of adjoint sensitivity]\label{lemma:hamiltonian_adjoint_sensitivity_manifold}
Define the so-called control Hamiltonian
\begin{equation}
    H_c:T^*\mathcal{M} \rightarrow \mathbb{R}\,;\; H_c(x,\lambda) = \lambda\big(f(x)\big)\,.
\end{equation}
Then Hamilton's equations \eqref{eq:Ham_q} and \eqref{eq:Ham_p} agree with the equations for the adjoint sensitivity \eqref{eq:adjoint_sensitivity_manifold_x} and \eqref{eq:adjoint_sensitivity_manifold}, respectively:
\begin{align}
        \dot{x}^j &=\frac{\partial H_c}{\partial \lambda_j} = f^j\,, \label{eq:hamiltonian_adjoint_sensitivity_manifold_q} \\
        \dot{\lambda}_{i} &= - \frac{\partial H_c}{\partial x^i}  
        = - \lambda_{j} \frac{\partial}{\partial x^i} f^j  \label{eq:hamiltonian_adjoint_sensitivity_manifold_p}\,. 
    \end{align}
\end{lemma}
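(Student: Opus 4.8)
The plan is to verify the two claimed identities by a direct computation in the canonical coordinates on $T^*\mathcal{M}$ induced by the chart $(U,X)$, exactly the coordinates in which Theorem \ref{thm:adjoint_sensitivity_manifold} is phrased. First I would fix the coordinate functions $x^i$ on $U$ together with the coordinate one-forms $\text{d}x^i$; the latter induce fiber coordinates $\lambda_i$ on $T^*U$ through the expansion $\lambda = \lambda_i\,\text{d}x^i$. The one structural fact I would state explicitly is that, in these canonical coordinates, the base coordinates $x^i$ and the fiber coordinates $\lambda_i$ are \emph{independent}: $\partial\lambda_j/\partial x^i = 0$ and $\partial f^j/\partial\lambda_i = 0$. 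This independence is exactly what makes the symplectic form canonical, $\omega = \text{d}\lambda_i\wedge\text{d}x^i$, and it is the only nontrivial ingredient in the argument.

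With this in place, I would write the control Hamiltonian in coordinates as $H_c(x,\lambda) = \lambda\big(f(x)\big) = \lambda_j f^j(x)$, using the pairing $\lambda_i\,\text{d}x^i(f^j\partial_{x^j}) = \lambda_j f^j$. The two partial derivatives then follow immediately: differentiating with respect to $\lambda_j$ and using that $f^j$ does not depend on $\lambda$ gives $\partial H_c/\partial\lambda_j = f^j$, while differentiating with respect to $x^i$ and using that $\lambda_j$ does not depend on $x^i$ gives $\partial H_c/\partial x^i = \lambda_j\,\partial f^j/\partial x^i$.

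Finally I would substitute these expressions into Hamilton's equations \eqref{eq:Ham_q} and \eqref{eq:Ham_p}, under the canonical identification $q^i\leftrightarrow x^i$, $p_i\leftrightarrow\lambda_i$. This yields $\dot{x}^j = \partial H_c/\partial\lambda_j = f^j$ and $\dot{\lambda}_i = -\partial H_c/\partial x^i = -\lambda_j\,\partial f^j/\partial x^i$, which are precisely Equations \eqref{eq:adjoint_sensitivity_manifold_x} and \eqref{eq:adjoint_sensitivity_manifold}. The computation is essentially a single line; the only point I expect to require care — and where I would be most explicit — is the independence of the canonical position and momentum coordinates, since it is this that turns the $x$-derivative in Hamilton's equations into the coefficient $\lambda_j\,\partial f^j/\partial x^i$ of the adjoint dynamics, rather than a derivative that would also act on $\lambda$ along the flow.
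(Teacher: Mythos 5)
Your proposal is correct and coincides with the paper's (implicit) argument: the paper states the lemma with the one-line computation $\partial H_c/\partial\lambda_j = f^j$ and $\partial H_c/\partial x^i = \lambda_j\,\partial f^j/\partial x^i$ embedded directly in the displayed equations, offering no further proof, and your verification in canonical coordinates --- writing $H_c = \lambda_j f^j(x)$, using the independence of base and fiber coordinates, and substituting into \eqref{eq:Ham_q} and \eqref{eq:Ham_p} --- is exactly that computation made explicit. Your emphasis on the independence of $x^i$ and $\lambda_i$ as canonical coordinates on $T^*U$ is the right point to flag, since it is what distinguishes the partial derivative $\partial H_c/\partial x^i$ from a derivative along the flow.
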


\begin{remark}
    The vector field $f^\uparrow \in \Gamma(TT^*\mathcal{M})$ defined by equations \eqref{eq:hamiltonian_adjoint_sensitivity_manifold_q} and \eqref{eq:hamiltonian_adjoint_sensitivity_manifold_p} is called the co-tangent lift of $f$.
\end{remark}

\subsection{Neural ODEs on Manifolds}\label{app:nodes_on_manifolds}


%



We phrase the optimal control problem on a manifold $\mathcal{M}$. Let $\theta \in \mathcal{P}$ denote a parameter, and define the parameterized, dynamic system
\begin{equation}\label{app:eq:parameterized_f_mfd}
    f:\mathcal{P}\rightarrow\Gamma(T\mathcal{M}) \,; \; f_\theta := f(\theta) \in \Gamma(T\mathcal{M}) \,.
\end{equation} 
We allow for time-dependent dynamics, and consider instead dynamics on the space-time manifold $\mathcal{N} = \mathcal{M}\times\mathbb{R}$, with dynamics $f^\mathcal{N}_\theta = f_\theta + \frac{\partial}{\partial t} \in \Gamma(T\mathcal{N})$. To avoid confusion with $t$, we denote the elapsed time by $s$, i.e., $t(0) = t_0$ and $t(s) = t_0 + s$.

With the flow $\Psi_{f_\theta}^s:\mathcal{N}\rightarrow\mathcal{N}$, define the cost $C^T_f:\mathcal{N}\rightarrow\mathbb{R}$ as
\begin{align}\label{eq:single_trajectory_cost_mfd}
    C^T_{f_\theta}\big((x_0,t_0),\theta\big) =& F\big(\Psi^T_{f_\theta}(x_0,t_0),\theta\big)\\ &+ \int_0^T r\big(\Psi^s_{f_\theta}(x_0,t_0),\theta,s\big) \text{d}s \,. \nonumber
\end{align}
The parameter gradient $\text{d}_\theta C^T_{f_\theta}\big((x_0,t_0),\theta\big) \in T^*_\theta \mathcal{P}$ is then computed by Theorem \ref{thm:adjoint_method_mfd}:

\begin{theorem}[Generalized Adjoint Method on Manifolds]\label{thm:adjoint_method_mfd}
    Given the dynamics \eqref{app:eq:parameterized_f_mfd} and the cost \eqref{eq:single_trajectory_cost_mfd}, the parameter gradient $\text{d}_\theta C^T_{f_\theta}\big((x_0,t_0),\theta\big) \in T^*_\theta \mathcal{P}$ is computed by 
    \begin{equation} \label{eq:grad_cost_mfd}
       \text{d}_\theta C^T_{f_\theta}\big((x_0,t_0),\theta\big) 
       = 
       \text{d}_\theta F + \int_0^T\text{d}_\theta \big(\lambda(f_\theta) +  r \big) \text{d}s\,.
    \end{equation}
    where the state $x(s) \in \mathcal{M}$ and adjoint state $\lambda(s) \in T^*_{x(s)}\mathcal{M}$ satisfy
    \begin{align}
        \dot{x}^j &= f^j\,,\; x(0) = x_0 \,,\; t(0) = t_0 \,, \label{eq:adjoint_method_manifold_x}\\
        \dot{\lambda}_{i} &= - \lambda_{j} \frac{\partial}{\partial x^i} f^j  - \frac{\partial r}{\partial x^i}\,,\; \lambda(T) = \text{d}C\big(x(T)\big) \label{eq:adjoint_method_manifold}\,.
    \end{align}
\end{theorem}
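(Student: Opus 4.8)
The plan is to reduce Theorem \ref{thm:adjoint_method_mfd} to the already-established adjoint sensitivity result (Theorem \ref{thm:adjoint_sensitivity_manifold}) by state augmentation, the technique underlying the generalized adjoint method of \cite{Massaroli2021}. The two features separating this statement from Theorem \ref{thm:adjoint_sensitivity_manifold} are the integral running cost $r$ and the fact that we differentiate with respect to the parameter $\theta$ rather than the initial condition $x_0$. Both can be folded into a single application of Theorem \ref{thm:adjoint_sensitivity_manifold} on a suitably enlarged manifold, so that no new analytic machinery is needed.

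First I would pass to the augmented manifold $\hat{\mathcal{M}} := \mathcal{M}\times\mathbb{R}\times\mathcal{P}\times\mathbb{R}$ with coordinates $(x,z,\theta,t)$, where $z$ is a scalar cost accumulator and the last factor carries time (as in the space-time construction $\mathcal{M}\times\mathbb{R}$). On $\hat{\mathcal{M}}$ define the augmented vector field $\hat f$ by $\dot{x}=f_\theta(x,t)$, $\dot{z}=r(x,\theta,t)$, $\dot{\theta}=0$, $\dot{t}=1$, together with the endpoint cost $\hat C(x,z,\theta,t):=F(x,\theta)+z$. Since $z(0)=0$ forces $z(T)=\int_0^T r\,\mathrm{d}t$, the endpoint value $\hat C\big(\hat\Psi^T_{\hat f}(x_0,0,\theta,0)\big)$ equals exactly the cost $C^T_{f_\theta}(x_0,\theta)$ of \eqref{eq:single_trajectory_cost_mfd}. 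The trivial dynamics $\dot\theta=0$ are the decisive bookkeeping device: the $\theta$-block of the initial-condition gradient delivered by Theorem \ref{thm:adjoint_sensitivity_manifold} is precisely the sought parameter gradient $\text{d}_\theta C^T_{f_\theta}$.

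Next I would apply Theorem \ref{thm:adjoint_sensitivity_manifold} to $(\hat{\mathcal{M}},\hat f,\hat C)$, obtaining an augmented adjoint state $\hat\lambda(t)\in T^*_{\hat x(t)}\hat{\mathcal{M}}$ with $\text{d}(\hat C\circ\hat\Psi^T_{\hat f})=\hat\lambda(0)$ and terminal condition $\hat\lambda(T)=\text{d}\hat C$. Decomposing $\hat\lambda=(\lambda,\mu,\nu,\tau)$ dual to $(x,z,\theta,t)$, I read the cotangent-lift dynamics $\dot{\hat\lambda}_i=-\hat\lambda_j\,\partial\hat f^j/\partial\hat x^i$ componentwise. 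Because neither $\hat f$ nor $\hat C$ depends on $z$, the accumulator adjoint obeys $\dot\mu=0$ with $\mu(T)=\partial\hat C/\partial z=1$, so $\mu(t)\equiv 1$. Substituting into the $x^i$-equation yields $\dot\lambda_i=-\lambda_j\,\partial f^j/\partial x^i-\mu\,\partial r/\partial x^i=-\lambda_j\,\partial f^j/\partial x^i-\partial r/\partial x^i$ with $\lambda(T)=\text{d}_x F$, which is exactly \eqref{eq:adjoint_method_manifold}, while the $x$-dynamics reproduce \eqref{eq:adjoint_method_manifold_x}. The $\theta$-block satisfies $\dot\nu=-\lambda_j\,\partial f^j/\partial\theta-\partial r/\partial\theta$ with $\nu(T)=\text{d}_\theta F$.

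Finally, the parameter gradient is $\nu(0)$. Integrating its dynamics from $T$ back to $0$ gives $\nu(0)=\text{d}_\theta F+\int_0^T\big(\lambda_j\,\partial f^j/\partial\theta+\partial r/\partial\theta\big)\,\mathrm{d}t=\text{d}_\theta F+\int_0^T\text{d}_\theta\big(\lambda(f_\theta)+r\big)\,\mathrm{d}t$, where the last identity uses $\lambda(f_\theta)=\lambda_j f^j$ with $\lambda$ and $x$ held fixed under $\text{d}_\theta$; this is \eqref{eq:grad_cost_mfd}. I expect the main obstacle to be conceptual rather than computational: precisely the justification that the initial-condition gradient of Theorem \ref{thm:adjoint_sensitivity_manifold}, restricted to the trivially-evolving $\theta$-block, genuinely coincides with the parameter gradient, and the observation that the accumulator adjoint is identically one, which is exactly what routes the running cost $r$ into the state adjoint equation. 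Chart-independence of the whole argument is inherited from Theorem \ref{thm:adjoint_sensitivity_manifold}, which is a coordinate-free statement merely expressed in a local chart.
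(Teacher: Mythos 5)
Your proposal is correct and is essentially the paper's own proof: the same state augmentation $(x,L,\theta,t)$ with accumulator dynamics $\dot{L}=r$, trivial parameter dynamics $\dot{\theta}=0$, endpoint cost $F+L$, an application of Theorem \ref{thm:adjoint_sensitivity_manifold} on the augmented manifold, the observation that the accumulator adjoint is identically one (routing $r$ into \eqref{eq:adjoint_method_manifold}), and back-integration of the $\theta$-adjoint to obtain \eqref{eq:grad_cost_mfd}. Your version is in fact slightly more careful than the paper's in making the $\theta$-dependence of $F$ explicit in the augmented cost, which the terminal condition $\nu(T)=\text{d}_\theta F$ requires.
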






\begin{proof}

Define the augmented state space as $\mathcal{M}' = \mathcal{M} \times \mathcal{P} \times \mathbb{R} \times \mathbb{R}$ with state $x' :=(x, \theta, L, t) \in \mathcal{M}'$. In addition, define the augmented dynamics $f_{\text{aug}} \in \Gamma(T\mathcal{M}')$ as
\begin{equation}
    f_{\text{aug}}(x') = 
    \begin{pmatrix}
        f_\theta(x,t) \\ 0 \\ r(x,\theta,t) \\ 1
    \end{pmatrix}\,,\; x'(0) = x'_0 := \begin{pmatrix}
        x_0 \\ \theta \\ 0 \\ t_0
    \end{pmatrix} \,.
\end{equation}
Next, define the augmented cost $C_{\text{aug}}:\mathcal{M}'\rightarrow \mathbb{R}$:
\begin{equation}
    C_{\text{aug}}(x') = F(x) + L\,.
\end{equation}
Then Equation \eqref{eq:single_trajectory_cost_mfd} can be rewritten as
\begin{equation}
    C^T_{f_\theta}(x'_0) = (C_{\text{aug}} \circ \Psi_{f_{\text{aug}}}^T)(x'_0)\,,
\end{equation}
which is of the required form to apply Theorem \ref{thm:adjoint_sensitivity_manifold}. By Theorem \ref{thm:adjoint_sensitivity_manifold}, the gradient $\text{d}\big(C_{\text{aug}} \circ \Psi_{f_{\text{aug}}}^T\big)$ is given by
\begin{equation} \label{eq:grad_aug_cost_mfd}
    \text{d} (C_{\text{aug}} \circ \Psi_{f_{\text{aug}}}^T)(x'_0) = \lambda(0) \,,
\end{equation}
where $\lambda(s)$ satisfies
\begin{align}\label{eq:lambda_aug_mfd}
    \dot{\lambda}_i = - \lambda_j \frac{\partial}{\partial x'_i} f_{\text{aug}}^j \,,\; \lambda(T) = \text{d}C_{\text{aug}}\big(x'(T)\big)\,.
\end{align}
Denote by $\text{d}_y,  \text{d}_\theta, \text{d}_L, \text{d}_t$ the components of the differential $\text{d}$ with respect to the component-manifolds $\mathcal{M}, \mathcal{P},\mathbb{R},\mathbb{R}$ of $\mathcal{M}'$, respectively, and similarly denote by $\lambda_x, \lambda_\theta, \lambda_L, \lambda_t$ the components of $\lambda$. The parameter gradient $\text{d}_\theta C^T_{f_\theta}\big((x_0,t_0),\theta\big)$ is then a component of the augmented cost gradient \eqref{eq:grad_aug_cost_mfd}:
\begin{equation}\label{eq:param_grad_aug_cost}
    \text{d}_\theta C^T_{f_\theta}\big((x_0,t_0),\theta\big) = \text{d}_\theta \big( C_\text{aug}\circ\Psi_{f_{\text{aug}}}^T\big)(x'_0) = \lambda_{\theta}(0)\,, 
\end{equation}
The dynamics of the components of the adjoint state are retrieved by expanding Equation\eqref{eq:lambda_aug_mfd}:
\begin{align}
    \dot{\lambda}_x &= - \frac{\partial}{\partial x} \big(\lambda_x (f_\theta) + \lambda_L(r)\big)
    \,,\; \lambda_x(T) = \text{d}_x C_{\text{aug}} = \text{d}_x F \,,
    \label{eq:lambda_x_mfd}
    \\
    \dot{\lambda}_\theta &= - \frac{\partial}{\partial \theta} \big(\lambda_x (f_\theta) + \lambda_L(r)\big)
    \,,\; \lambda_\theta(T) = \text{d}_\theta  C_{\text{aug}} = \text{d}_\theta F \,,
    \label{eq:lambda_theta_mfd}
    \\
    \dot{\lambda}_L &= 0
    \,,\; \lambda_L(T) = \text{d}_L C_{\text{aug}} = 1\,,
    \label{eq:lambda_L_mfd}
    \\
    \dot{\lambda}_t &= - \frac{\partial}{\partial t} \big(\lambda_x (f_\theta) + \lambda_L(r)\big)
    \,,\; \lambda_t(T) = \text{d}_t F = 0\,.
    \label{eq:lambda_t_mfd}
\end{align}
Note that $\lambda_L = 1$ is constant, such that equation \eqref{eq:lambda_x_mfd} coincides with \eqref{eq:adjoint_method_manifold}. Equation \eqref{eq:grad_cost_mfd} is recovered by integrating \eqref{eq:lambda_theta_mfd} from $s=0$ to $s=T$ and combining this with equation \eqref{eq:param_grad_aug_cost}. $\lambda_t$ does not appear in any of the other equations, such that Equation \eqref{eq:lambda_t_mfd} may be ignored.
\qed
\end{proof}

\begin{remark}
    Note that the parameter gradient $\text{d}_\theta C^T_{f_\theta}\big((x_0,t_0),\theta\big) \in T^*_\theta \mathcal{P}$ is a co-vector on $\mathcal{P}$. In order to recover a vector, a choice of metric tensor $M \in \Gamma(T^0_2\mathcal{P})$ is required. Then gradient descent corresponds to following integral curves of $M^{-1} \text{d}_\theta C^T_{f_\theta} \in \Gamma(T\mathcal{P})$. To recover the formalism in the main text we chose $\mathcal{P} = \mathbb{R}^k$ and $M = I_k$ 
    the $k$ by $k$ identity matrix in canonical coordinates on $\mathbb{R}^k$.
\end{remark}

Theorem \ref{thm:adjoint_method_mfd} also has a Hamiltonian form:

\begin{lemma}[Hamiltonian form of the generalized adjoint method]\label{lemma:Hamiltonian_adjoint_method_mfd}
   Define the time-dependent control Hamiltonian $H_c:T^*\mathcal{M}\times\times\mathcal{P}\times\mathbb{R}\rightarrow\mathbb{R}$ as
    \begin{equation}\label{eq:control_Hamiltonian_mfd}
        H_c(x,\lambda,\theta,t) = \lambda\big(f_\theta(x,t)\big) + r(x,\theta,t)\,.
    \end{equation}
   Then the integral equation \eqref{eq:grad_cost_mfd} reads
    \begin{equation}\label{eq:hamiltonian_grad_cost_mfd}
        \text{d}_\theta C^T_{f_\theta}(x,\theta) = \text{d}_\theta F + \int_0^T\text{d}_\theta H_c \text{d}t\,,
    \end{equation}
   and Hamilton's equations \eqref{eq:Ham_q} and \eqref{eq:Ham_p} agree with the equations for the adjoint sensitivity \eqref{eq:adjoint_method_manifold_x} and \eqref{eq:adjoint_method_manifold}, respectively:
    \begin{align}
        \dot{x}^j &=\frac{\partial H_c}{\partial \lambda_j} = f^j\,,\; 
        \label{eq:hamiltonian_general_adjoint_sensitivity_manifold_q} \\
        \dot{\lambda}_{i} &= - \frac{\partial H_c}{\partial x^i} \label{eq:hamiltonian_general_adjoint_sensitivity_manifold_p} 
        = - \lambda_{j} \frac{\partial}{\partial x^i} f^j  - \frac{\partial r}{\partial x^i}\,. 
    \end{align}
\end{lemma}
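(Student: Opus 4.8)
The plan is to verify the lemma by direct identification: both assertions follow from Theorem~\ref{thm:adjoint_method_mfd} once the integrand and the two Hamiltonian partials are rewritten in terms of the control Hamiltonian \eqref{eq:control_Hamiltonian_mfd}, so no new analytic content beyond that theorem is needed.

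First I would address the integral formula \eqref{eq:hamiltonian_grad_cost_mfd}. By the definition \eqref{eq:control_Hamiltonian_mfd}, along the trajectory $(x(t),\lambda(t))$ one has $H_c = \lambda(f_\theta) + r$, where $\lambda(f_\theta)$ denotes the pairing $\lambda_j f^j$ of the one-form with the vector field. Consequently the integrand $\text{d}_\theta\big(\lambda(f_\theta)+r\big)$ appearing in \eqref{eq:grad_cost_mfd} is exactly $\text{d}_\theta H_c$, where the partial gradient $\text{d}_\theta$ is taken holding the arguments $x$ and $\lambda$ fixed. This is precisely the convention under which \eqref{eq:grad_cost_mfd} was derived, since the total dependence of $C^T_{f_\theta}$ on $\theta$ is already carried by the adjoint state and only the explicit parameter dependence is differentiated here. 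Substituting this identity into \eqref{eq:grad_cost_mfd} yields \eqref{eq:hamiltonian_grad_cost_mfd} immediately.

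Second, I would verify Hamilton's equations in the induced coordinates $(x^i,\lambda_i)$ on $T^*\mathcal{M}$ used in Theorem~\ref{thm:adjoint_method_mfd}. Writing $H_c = \lambda_k f^k(x,t) + r(x,\theta,t)$, the derivative with respect to the momentum coordinate is $\partial H_c/\partial \lambda_j = f^j$, since $r$ is independent of $\lambda$; this reproduces \eqref{eq:hamiltonian_general_adjoint_sensitivity_manifold_q} and hence the state equation \eqref{eq:adjoint_method_manifold_x}. The derivative with respect to the base coordinate is $\partial H_c/\partial x^i = \lambda_j\,\partial f^j/\partial x^i + \partial r/\partial x^i$, so that $-\partial H_c/\partial x^i$ equals the right-hand side of \eqref{eq:adjoint_method_manifold}, reproducing \eqref{eq:hamiltonian_general_adjoint_sensitivity_manifold_p}. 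Matching these against the canonical equations \eqref{eq:Ham_q}--\eqref{eq:Ham_p} completes the identification.

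The computation itself is routine, and the points requiring care are bookkeeping rather than analysis. I would check that $H_c$ is well defined as a genuine function on $T^*\mathcal{M}$, so that $\partial H_c/\partial \lambda_j$ and $\partial H_c/\partial x^i$ transform as the components of a Hamiltonian vector field; this holds because the pairing $\lambda(f_\theta)$ is chart-independent. I would also confirm that the coordinate conventions and terminal conditions coincide with those fixed in Theorem~\ref{thm:adjoint_method_mfd}, so that the sign of the $r$-contribution lines up. The main, and only mild, subtlety is the distinction between the partial gradient $\text{d}_\theta$ used in the integral formula and the total $\theta$-dependence of the cost, which I would make explicit in order to justify replacing the integrand of \eqref{eq:grad_cost_mfd} by $\text{d}_\theta H_c$.
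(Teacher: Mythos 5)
Your proof is correct and takes the same route the paper intends: the paper in fact states this lemma without any written proof, treating it as an immediate verification from Theorem~\ref{thm:adjoint_method_mfd}, and your computation of $\partial H_c/\partial \lambda_j = f^j$ and $-\partial H_c/\partial x^i = -\lambda_j \partial f^j/\partial x^i - \partial r/\partial x^i$, together with substituting $\text{d}_\theta H_c$ for the integrand of \eqref{eq:grad_cost_mfd}, simply writes that verification out. Your remark distinguishing the partial gradient $\text{d}_\theta$ (at fixed $x,\lambda$) from the total $\theta$-dependence of the cost is exactly the right bookkeeping point and is consistent with the convention used in the paper's Theorem~\ref{thm:adjoint_method_mfd}.
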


\subsection{Neural ODEs on Lie groups}\label{App:A:nODEs_on_Lie_groups}

The generalized adjoint method on a Lie group $G$ is obtained from the generalized adjoint method on manifolds. In the setup, consider the Lie group $G$ as a manifold, and consider that a dynamic system \eqref{app:eq:parameterized_f_mfd} and a cost \eqref{eq:single_trajectory_cost_mfd} are given.  




\begin{theorem}[The generalized adjoint method on Lie groups]\label{thm:hamiltonian_adjoint_method_lie_groups}
    Given a vector field $f_\theta \in \Gamma(TG)$ and a cost \eqref{eq:single_trajectory_cost_mfd}. Denote by $\tilde{f}_\theta(g,t) := (L_{g^{-1}})_* f_\theta(g,t)$ and define the control-Hamiltonian $\mathcal{H}_c:G\times\mathfrak{g}^*\times\mathcal{P}\times\mathbb{R}\rightarrow \mathbb{R}$ as 
    \begin{equation} \label{eq:control_Hamiltonian_Lie_group}
        \mathcal{H}_c(g,\bar{\lambda}_{g},\theta,t) = \bar{\lambda}_{g} (\tilde{f}_\theta(g,t)) + r(g,\theta,t) \,.
    \end{equation}
    Then the parameter gradient $\text{d}_\theta C^T_{f_\theta}(g_0,\theta) \in T^*_\theta \mathcal{P}$ is computed by 
    \begin{equation} \label{eq:grad_cost_lie_group}
       \text{d}_\theta C^T_{f_\theta}(g_0,\theta) 
       = 
        \text{d}_\theta F + \int_0^T \text{d}_\theta \mathcal{H}_c \text{d}t\,,
    \end{equation}
    where the state $g(t) \in G$ and adjoint state $\bar{\lambda}_g(t) \in \mathfrak{g}^*$ satisfy
    \begin{align}
        \dot{g} & = f_\theta\,, \; g(0) = g_0 \,, \label{eq:Lie_Adjoint_Sensitivity}  \\
        \dot{\bar{\lambda}}_{g} &= - \bar{\text{d}}_g(\bar{\lambda}_g(\tilde{f}_\theta)+r) 
        + \text{ad}_{\tilde{f}_\theta}^* \bar{\lambda}_g \,, \; \bar{\lambda}_{g}(T) = \big(\bar{\text{d}}_g F\big)\big(g(T)\big) \label{eq:Lie_Adjoint_Sensitivity_Co}
    \end{align}
\end{theorem}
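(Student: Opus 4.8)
The plan is to obtain the Lie-group statement as a specialization of the generalized adjoint method on manifolds (Theorem \ref{thm:adjoint_method_mfd}) applied with $\mathcal{M} = G$, and then to transport the resulting cotangent dynamics into left-trivialized form. By the Hamiltonian reformulation in Lemma \ref{lemma:Hamiltonian_adjoint_method_mfd}, the state $g(t)$ and the manifold adjoint state $\lambda(t) \in T^*_{g(t)}G$ evolve as the Hamiltonian flow on $T^*G$ of the control Hamiltonian $H_c(g,\lambda,\theta,t) = \lambda(f_\theta) + r$, with terminal condition $\lambda(T) = \text{d}_g F$, and the parameter gradient equals $\text{d}_\theta F + \int_0^T \text{d}_\theta H_c \,\text{d}t$. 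Everything on $G$ will then follow by pushing this flow through the left-trivialization $T^*G \cong G\times\mathfrak{g}^*$.

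First I would introduce the left-trivialized adjoint state $\bar{\lambda}_g := (L_g)^* \lambda \in \mathfrak{g}^*$, exactly as in \eqref{eq:bar_grad_G}. Since $f_\theta = (L_g)_* \tilde{f}_\theta$ by the very definition of $\tilde{f}_\theta$, duality of push-forward and pullback yields the pointwise identity $\lambda(f_\theta) = \big((L_g)^*\lambda\big)(\tilde{f}_\theta) = \bar{\lambda}_g(\tilde{f}_\theta)$. Hence $H_c$ agrees with the control Hamiltonian $\mathcal{H}_c$ of \eqref{eq:control_Hamiltonian_Lie_group} under the identification. Because $\text{d}_\theta$ acts only on the explicit parameter dependence and commutes with the trivialization, this immediately converts the manifold gradient formula into \eqref{eq:grad_cost_lie_group} and converts the terminal condition $\lambda(T) = \text{d}_g F$ into $\bar{\lambda}_g(T) = \bar{\text{d}}_g F$.

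Next I would invoke the left-trivialized (Lie--Poisson) form of Hamilton's equations \eqref{eq:Lie_Ham}--\eqref{eq:Lie_Ham_Co}, which describes \emph{any} Hamiltonian flow on $T^*G \cong G\times\mathfrak{g}^*$ and therefore applies verbatim to $\mathcal{H}_c$ with $\bar{\lambda}_g$ playing the role of the momentum. As $\mathcal{H}_c$ is affine in $\bar{\lambda}_g$ with linear part $\tilde{f}_\theta$, one has $\text{d}_{\bar{\lambda}_g}\mathcal{H}_c = \tilde{f}_\theta$. The state equation \eqref{eq:Lie_Ham} then reads $\dot{g} = (L_g)_* \tilde{f}_\theta = f_\theta$, recovering \eqref{eq:Lie_Adjoint_Sensitivity}, while the costate equation \eqref{eq:Lie_Ham_Co} reads $\dot{\bar{\lambda}}_g = -\bar{\text{d}}_g\mathcal{H}_c + \text{ad}^*_{\tilde{f}_\theta}\bar{\lambda}_g$, which is precisely \eqref{eq:Lie_Adjoint_Sensitivity_Co}.

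The main obstacle, and the only place where the non-abelian group structure genuinely enters, is the passage from the cotangent-lift form of the manifold adjoint equation \eqref{eq:adjoint_method_manifold} to the Lie--Poisson form: one must verify that the extra term generated by differentiating the moving pullback $(L_{g(t)})^*$ along $\dot{g} = f_\theta$ is exactly $\text{ad}^*_{\tilde{f}_\theta}\bar{\lambda}_g$. I would either absorb this into the cited identity \eqref{eq:Lie_Ham_Co}, or verify it directly by differentiating $\frac{d}{dt}\big[(L_{g(t)})^*\lambda(t)\big]$ with the product rule, substituting the manifold adjoint dynamics for $\dot{\lambda}$ and identifying the contribution of the infinitesimal left translation generated by $\tilde{f}_\theta$ as the coadjoint action. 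All remaining steps --- the duality identity and the affine dependence of $\mathcal{H}_c$ on $\bar{\lambda}_g$ --- are routine.
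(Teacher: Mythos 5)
Your proposal is correct and follows essentially the same route as the paper's proof: specialize the manifold result via its Hamiltonian form (Lemma \ref{lemma:Hamiltonian_adjoint_method_mfd}), identify $H_c$ with $\mathcal{H}_c$ under the left trivialization $\bar{\lambda}_g = L_g^*\lambda$, and invoke the left-trivialized Hamilton's equations \eqref{eq:Lie_Ham}--\eqref{eq:Lie_Ham_Co} to obtain \eqref{eq:Lie_Adjoint_Sensitivity}--\eqref{eq:Lie_Adjoint_Sensitivity_Co} together with the terminal condition $\bar{\lambda}_g(T) = \bar{\text{d}}_g F$. The ``main obstacle'' you flag --- that the coadjoint term arises from differentiating the moving pullback $(L_{g(t)})^*$ --- is resolved in the paper exactly as in your first option, by absorbing it into the cited identity \eqref{eq:Lie_Ham_Co}.
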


\begin{proof}
    According to Equation \eqref{eq:HamLie_to_Ham} the Lie group control-Hamiltonian $\mathcal{H}_c:G\times\mathfrak{g}^*\times\mathcal{P}\times\mathbb{R}\rightarrow \mathbb{R}$ in \eqref{eq:control_Hamiltonian_Lie_group} directly corresponds to a manifold control-Hamiltonian $H_c:T^*G\times\mathcal{P}\times\mathbb{R}\rightarrow \mathbb{R}$ by substituting $\bar{\lambda}_g(t) = L^*_g\lambda(t)$:
    \begin{equation}\label{eq:Ham_mfd_to_Ham_Lie}
        H_c(g,\lambda,\theta,t) = \mathcal{H}_c(g,L_g^*\lambda,\theta,t) = \lambda\big(f_\theta(g,t)\big) + r(g,\theta,t)\,.
    \end{equation}
    By Lemma \ref{lemma:Hamiltonian_adjoint_method_mfd}, $H_c$ can be used to construct the adjoint sensitivity on $G$ as a manifold. By substitution of \eqref{eq:Ham_mfd_to_Ham_Lie} into \eqref{eq:hamiltonian_grad_cost_mfd}, equation \eqref{eq:grad_cost_lie_group} is recovered. 
    Further, Hamilton's equations \eqref{eq:hamiltonian_general_adjoint_sensitivity_manifold_q},\eqref{eq:hamiltonian_general_adjoint_sensitivity_manifold_p} are rewritten in their form on a Lie group by means of \eqref{eq:Lie_Ham},\eqref{eq:Lie_Ham_Co}: 
    \begin{align}
         \dot{g} &= {L_{g}}_*(\frac{\partial \mathcal{H}_c}{\partial \bar{\lambda}_{g}})  
         = f_\theta\,, \\
         \dot{\bar{\lambda}}_{g} &= - {L_{g}}^{*}(\frac{\partial \mathcal{H}_c}{\partial g}) + \text{ad}_{\frac{\partial \mathcal{H}_c}{\partial \bar{\lambda}_{g}}}^* \bar{\lambda}_{g}  \\
        & =  - \bar{\text{d}}_g(\bar{\lambda}_g(\tilde{f}_\theta)+r) 
        + \text{ad}_{\tilde{f}_\theta}^* \bar{\lambda}_g \,,
    \end{align}
    This recovers equations \eqref{eq:Lie_Adjoint_Sensitivity} and \eqref{eq:Lie_Adjoint_Sensitivity_Co}. To find the final condition $\bar{\lambda}_g(T)$, again use that $\bar{\lambda}_g(t) = L^*_g\lambda(t)$:
    \begin{equation}
        \bar{\lambda}_g(T) = L^*_g\lambda(T) = L^*_g \text{d}F(g) = \bar{d}_g F
    \end{equation}
    where the final step uses the definition of $\bar{d}_g$ in equation \eqref{eq:bar_grad_G}.
    \qed
\end{proof}

In order to arrive at the Hamiltonian form on matrix Lie groups, one may rewrite Equations \eqref{eq:hamiltonian_general_adjoint_sensitivity_manifold_q} and \eqref{eq:hamiltonian_general_adjoint_sensitivity_manifold_p} in their Hamiltonian form on matrix Lie groups \eqref{eq:Lie_Ham_Matrix}, \eqref{eq:Lie_Ham_Co_Matrix}.

\begin{remark}
    For a given finite-dimensional abstract Lie group the equations 
    \eqref{eq:Lie_Adjoint_Sensitivity} and \eqref{eq:Lie_Adjoint_Sensitivity_Co} two technical tools of Section \ref{sec:technical_tools} require a light adjustment. The Lie group integrator as in Section \eqref{ssec:technical_tools:lie_group_integrators} and the gradient $\text{d}_g$ presented in \eqref{ssec:technical_tools:gradients} should be phrased using the exponential map on the abstract Lie group \citep[Chapter 4.2.3]{Isham1999}. The chart dynamics \eqref{eq:dyn_Lie} should be phrased more generally with $\tilde{f}_{\theta}(g,t) := {L_{g^{-1}}}_* f_\theta(g,t)$. 
    Remaining operators such as $\Lambda$ in equation \eqref{eq:abstract_tilde} and $K(q_j)$ in equation \eqref{eq:dexp_rep} do not require adjustment, and can still be applied to render the dynamics of the state $g$ and adjoint state $\lambda_g$ on $\mathbb{R}^n$.
\end{remark}

\begin{remark}
While Theorem \ref{thm:adjoint_method_matrix_lie_group} covers the case for left-translated vector fields $\tilde{f}^L = (L_{g^{-1}})_* f$, where $\lambda$ is dual to the left-translation of $\dot{g}$, the case for right-translated vector fields $\tilde{f}^R = (R_{g^{-1}})_* f$ is nearly equivalent. Use of the right representation requires an adapted definition of the gradient operator \eqref{eq:grad_G} as
\begin{align}\label{eq:grad_G_R}
    \text{d}_g^R V & = \frac{\partial}{\partial q_j} V(\exp(\Lambda_e {q}_j) g)_{|q_j = 0} \nonumber \\ &= \frac{\partial}{\partial q_j} V((I+\Lambda{q}_j)g)_{|q_j = 0} \,. 
\end{align}
and the $\text{ad}_{\tilde{f}}$ term in the adjoint equation undergoes a sign-flip, leading to 
\begin{equation}
    \dot{\lambda}_g^R = \text{d}_g^R(\lambda_g^R(\tilde{f}_\theta^R)+r) 
        - \text{ad}_{\tilde{f}^R_\theta}^* \lambda_g^R \,. \label{eq:Lie_Adjoint_Sensitivity_Co_R}
\end{equation}
\end{remark}

\section{Theory}\label{sec:app}

\subsection{Chart invariance of derivative of exponential map}\label{appA:chart_invariance_K}
This appendix shows that \eqref{eq:G_vec_to_algebra_rep} is invariant under the choice of chart $(U_h,X_h)$, given that the chart is chosen from an exponential atlas \eqref{eq:one_chart_G} - \eqref{eq:one_chart_G_X_inv}.

The derivative of the exponential map $g(t) = \exp\big(\Lambda\big(q(t)\big)\big)$ is \citep{rossmann2006lie} 
\begin{equation}
    \dot{g} = \frac{\text{d}}{\text{d}t}\exp\big(\Lambda(q(t))\big)
\end{equation}
This can be represented as $\tilde{A} = (g^{-1}\dot{g}) \in \mathfrak{g}$ by
\begin{equation}
    \Tilde{A} = \bigg(\exp\big(\Lambda\big(q(t)\big)\big)^{-1} \frac{\text{d}}{\text{d}t}\exp\big(\Lambda(q(t))\big)\bigg) = 
     \Lambda\big(K(q)\dot{q}\big) \,,
\end{equation}
where $K(q)$ is as in equation \eqref{eq:dexp_rep}.  

In contrast, define $g(t) = X^{-1}_h\big(q_h(t)\big) = h \exp\big(\Lambda\big(q_h(t)\big)\big)$ in an exponential chart $(U_h,X_h)$. Then one finds 
\begin{equation}
    \dot{g} = \big(\frac{\text{d}}{\text{d}t} X^{-1}_h(q_h)\big) = h \frac{\text{d}}{\text{d}t} \exp\big(\Lambda\big(q_h(t)\big)\big) \,.
\end{equation}
Represent this as  $\Tilde{A} = \big(g^{-1} \dot{g}\big)$, and the expression is independent of $h$: 
\begin{equation}
    \Tilde{A} =  
    \bigg(\exp\big(\Lambda(q_h)\big)^{-1} \frac{\text{d}}{\text{d}t} \exp\big(\Lambda(q_h)\big) \dot{q}_h\bigg) = \Lambda\big(K(q_h) \dot{q}_h\big)\,.
\end{equation}


\subsection{Algorithm for chart-transitions}\label{ssec:app_ChartTransitions}

We describe an algorithm for computing the integral curves $g:\mathbb{R}\rightarrow G$ of a vector field $f\in\Gamma(TG)$, by computations in local charts $(U_i, X_i)$ from a minimal atlas $\mathcal{A}^G_{\text{min}}$ of $G$. 

In the algorithm, the trajectory $g(t)$ and the vector field $f$ are represented in terms of a chart-representative $q_i(t) := X_i \circ g(t) \in \mathbb{R}^n$ and $f_i := {X_i}_*f \in \Gamma(T\mathbb{R}^n)$, respectively. Integration goes from $t = 0$ to $t = T$. An integration step from $q_i(t)$ to $q_i(t+\Delta t)$ consists of computing the flow-map $\Psi^{\Delta t}_{f_i}:\mathbb{R}^n \rightarrow \mathbb{R}^n$, which corresponds to applying an (arbitrary) ODE-solver with initial condition $q_i(t)$. 

Denote by functions $\sigma_i:G \rightarrow \mathbb{R}$ a partition of unity w.r.t. the chart regions $U_i$ of $\mathcal{A}^G_{\text{min}}$, i.e., the $\sigma_i$ satisfy 
\begin{align}
    &0 \leq \sigma_i(g) \leq 1 \,, \label{eq:PoU-1}\\
    &\sum_i \sigma_i(g) = 1 \,, \label{eq:PoU-2} \\ 
    &\sigma_i(g) \geq 0 \Leftrightarrow g \in U_i \label{eq:PoU-3} \,.
\end{align}
To determine the index $i$ for a given integration step, we choose $i = \arg\max_i \sigma\big(g(t)\big)$. When $\sigma_i\big(g(t)\big)$ falls below a threshold-value $\sigma_{\text{min}}$ during integration, the chart is switched to the one with the largest $\sigma_i$. With $N$ the number of charts in $\mathcal{A}^G_{\text{min}}$, we choose $\sigma_\text{min} = 1/(1+N)$. 


The full trajectory is stored in an array $Q$ in terms of chart components $(q_i,i) \in \mathbb{R}^{n+1}$.

The algorithm is summarized in Algorithm \ref{app:alg_chart_switch_Lie_group}: 

\begin{algorithm}
    \caption{Chart-Switching on G}\label{app:alg_chart_switch_Lie_group}
    \begin{algorithmic}[1]
        \State $i \gets \arg \max_i \sigma_i(g_0)$
        \State $q_{i} \gets X_i (g_0)$
        \State $Q \gets (q_i, i)$
        \State $t \gets 0$
        \While{$t < T$}
            \State $t  \gets t+\Delta t$
            \State $q_{i} \gets \Psi^{\Delta t}_{f_i}(q_i)$
            \If{$\sigma_i(X_i(q_i)) < \sigma_\text{min}$}
                \State $i_+ \gets \arg \max_i \sigma_i(X_i^{-1}(q_i))$
                \State $q_{i} \gets X_{i_+}\circ X_i^{-1} (q_i)$
                \State $i \gets i_+$
            \EndIf
            \State $Q \gets \text{concatenate}(Q,(q_i,i))$
        \EndWhile
    \end{algorithmic}
\end{algorithm}

\begin{remark}
    The choice of $\sigma_\text{min}$ requires an atlas with a locally finite number of charts. This is possible as long as $G$ is paracompact. Then we choose $\sigma_\text{min} = 1/(1+N(g))$, where $N(g) = \sum_k (\sigma_k(X_i^{-1}(q_i)) > 0)$ is the number of non-zero $\sigma_i$ at $g$. 
\end{remark}

For $G = SE(3)$ and $\mathcal{A}^{SE(3)}_{\text{min}}$ chosen as in \eqref{eq:MinAtlas_SE3}, a partition of unity is given by \eqref{eq:PoU_SE3}. For this choice of atlas and partition of unity the cut-off $\sigma_\text{min} = 1/5$ is assigned. 


\subsection{Completeness of Minimal Atlas} \label{ssec:app_MinAt_Complete}
This section shows that $\mathcal{A}^{SE(3)}_\text{min}$ and  $\mathcal{A}^{SO(3)}_\text{min}$ are complete atlases.  
Recall that the charts of $\mathcal{A}^{SO(3)}_\text{min}$ are $(U_i, X_i)$ with $i = 0,1,2,3$ and chart-regions $U_i$ defined by \eqref{eq:chart_region_SO3}.
We will show that $\bigcup_i U_{i} = SO(3)$. 

To this end, we aim to find a partition of unity w.r.t. the chart regions of $\mathcal{A}^{SO(3)}_\text{min}$, i.e., partition functions $\sigma_i:SO(3)\rightarrow \mathbb{R}$ that satisfy \eqref{eq:PoU-1}, \eqref{eq:PoU-2} and \eqref{eq:PoU-3}.

Given such $\sigma_i$, properties \eqref{eq:PoU-2} and \eqref{eq:PoU-3} directly imply that $\bigcup U_i = SO(3)$: for any $R \in SO(3)$ there must be an $i$ such that $\sigma_i(R) > 0$, hence there be a $U_i$ such that $R \in U_i$. Thus $SO(3) \subseteq \bigcup_i U_i$, and the converse $\bigcup_i U_i \subseteq SO(3)$ holds trivially, such that $SO(3) = \bigcup_i U_i$.

Consider the following candidate partition of unity w.r.t. $U_i$:
\begin{equation}\label{eq:PoU_SO3}
    \sigma_i(R) = \big(\text{Tr}(R_{i}^{\top}R)+1\big)/4\,.
\end{equation}
The remainder of the proof consists of showing that \eqref{eq:PoU_SO3} indeed constitute a partition of unity, i.e., that they satisfy \eqref{eq:PoU-1}, \eqref{eq:PoU-2} and \eqref{eq:PoU-3}.

To this end, express $R$ in terms of coordinates $\omega_i \in \mathbb{R}^3$ in the chart $(U_i,X_i)$, i.e. with $\theta_i = \sqrt{\omega_i^\top \omega_i}$ and $\hat{\omega_i} = \frac{\omega_i}{\theta_i}$, as 
\begin{align} \label{eq:R_in_chart_i}
    R(\omega_i) & = X_i^{-1}(\omega_i) = R_i e^{\Tilde{\omega}_i} \\ 
    & = I + \sin(\theta_i) \Tilde{\hat{\omega}}_i + \big(1-\cos(\theta_i)\big)\Tilde{\hat{\omega}}_i^2 \nonumber \,.   
\end{align}

Combine \eqref{eq:PoU_SO3} and \eqref{eq:R_in_chart_i} to find
\begin{equation} \label{eq:PoU_in_Chart}
    \sigma_i\big(R(\omega_i)\big) = \big(\text{Tr}(e^{\Tilde{\omega}_i})+1\big)/4 = 1/2 + 1/2\cos(\theta_i)\,.
\end{equation}
Hence, all $\sigma_i$ are bounded by $0$ and $1$, fulfilling property \eqref{eq:PoU-1}. It is also clear that $\sigma_i(R) = 0$ when $\theta_i = \pi$, which is the condition for $R \neq U_i$, such that all $\sigma_i$ fulfill property \eqref{eq:PoU-3}. 

To show that condition \eqref{eq:PoU-2} is fulfilled, express $R$ in the chart $(U_0,X_0)$, i.e.,
\begin{equation}
    R(\omega_0) = e^{\tilde{\omega}_0}\,,
\end{equation}
and compute 
\begin{align} 
    \text{Tr}\big(R_{0}^{\top}R(\omega_0)\big) &= \text{Tr}(R) = 1+2\cos(\theta_0) \label{eqTr0}\,, \\
    \text{Tr}\big(R_{1}^{\top}R(\omega_0)\big) &= -1 + 2(1-\cos(\theta_0))\hat{\omega}_{0,1}^2 \label{eqTr1}\,,\\
    \text{Tr}\big(R_{2}^{\top}R(\omega_0)\big) &= -1 + 2(1-\cos(\theta_0))\hat{\omega}_{0,2}^2 \label{eqTr2}\,,\\
    \text{Tr}\big(R_{3}^{\top}R(\omega_0)\big) &= -1 + 2(1-\cos(\theta_0))\hat{\omega}_{0,3}^2 \label{eqTr3} \,.
\end{align}
Then
\begin{align}
    \sum_i \sigma_i\big(R(\omega_0)\big) &=  \sum_i (\text{Tr}(R_{i}^{\top}R(\omega_0)+1)/4 = 1 \,, 
\end{align}
which confirms that the $\sigma_i$ fulfill property \eqref{eq:PoU-2}.
Hence, there is always a valid chart given by $\sigma_i(R)>0$ and $\mathcal{A}_{\min}$ is indeed a minimal Atlas.

The same argument applies to show the completeness of $\mathcal{A}^{SE(3)}_\text{min}$: indeed, the functions 
\begin{equation}\label{eq:PoU_SE3}
    \sigma_i(H) = \text{Tr}(H_{i}^{-1}H)/4\,,
\end{equation}
constitute a partition of unity w.r.t. the chart regions $\mathcal{U}_i$ defined by \eqref{eq:chart_region_SE3}.


\subsection{Derivative of the exponential map on $SE(3)$}\label{ssec:app:derivative_exp_SE3}
Denoting $q = (\omega, v)^T$ and $\|\omega_i\|_2 = \theta$, the derivative $K(q)$ of the exponential map is found via the Caley-Hamilton theorem \citep{Visser2006} as
\begin{equation}
    K(q) = \sum_{i=0}^5 a_i(\theta)\text{ad}_q^i\,.
\end{equation}

Let $\text{sinc}(\theta) := \sin(\theta)/\theta$, then the $a_i(\theta)$ are given by
\begin{align*}
    a_0 &= 1 \,, \\
    a_1 &= -\frac{1}{2} \,, \\
    a_2 &= \frac{1}{4\theta^2} (8 + 2\cos(\theta) - 10 \,\text{sinc}(\theta)) \,, \\
    a_3 &= \frac{1}{4\theta^3} (-4\theta + \frac{12 - 12\cos(\theta)}{\theta} - 2\sin(\theta)) \,, \\
    a_4 &= \frac{1}{4\theta^4} (4 + 2\cos(\theta) - 6 \,\text{sinc}(\theta)) \,, \\
    a_5 &= \frac{1}{4\theta^5} (-2\theta + \frac{8 - 8\cos(\theta)}{\theta} - 2\sin(\theta))\,,
\end{align*}

\subsection{Construction of additional Lie groups} \label{app:additional_Lie_groups}

The Lie groups $(\mathbb{R}^k,+)$ and $SE(3)\times \mathbb{R}^6$ are constructed as matrix Lie groups.

\subsubsection{The Lie group $(\mathbb{R}^k,+)$}\label{app:Lie_group_(R^k,+)}

The Lie group $(\mathbb{R}^k,+)$ is defined as a matrix Lie group $\text{Vec}(k,\mathbb{R}) \subset GL(k+1,\mathbb{R})$ by
\begin{equation}
    \text{Vec}(k,\mathbb{R}) = \{\begin{bmatrix} I & p \\ 0& 1 \end{bmatrix} \in \mathbb{R}^{(k+1)\times (k+1)}| \, p \in \mathbb{R}^k\}\,,
\end{equation}
with Lie algebra $\text{vec}(k,\mathbb{R}) \subset gl(k+1,\mathbb{R})$
\begin{equation}
    \text{vec}(k,\mathbb{R}) = \{\begin{bmatrix} 0 & v \\ 0& 0 \end{bmatrix} \in \mathbb{R}^{(k+1)\times (k+1)}| \, v \in \mathbb{R}^k\}\,.
\end{equation}
Define 
\begin{equation}
    \Lambda:\mathbb{R}^k \rightarrow \text{vec}(k,\mathbb{R})\,;\; v \mapsto \begin{bmatrix} 0 & v \\ 0& 0 \end{bmatrix}\,,
\end{equation}
then $\text{ad}_v \in \mathbb{R}^{k \times k} $ is the zero matrix.
The exponential and logarithmic maps are
\begin{align}
    &\exp(\Lambda(v)) = e^{\Lambda(v)} = I + \Lambda(v) \,, \label{eq:exp_Vecn}\\
    &\log(\begin{bmatrix} I & v \\ 0& 1 \end{bmatrix}) = v \,,
\end{align}
This logarithmic map has a global range, such that $\mathcal{A}
= \{(\text{Vec}(k,\mathbb{R}), \log) \}$ is a minimal exponential atlas for $\text{Vec}(k,\mathbb{R})$.
By definition \eqref{eq:grad_G} the gradient of $V:\text{Vec}(k,\mathbb{R})\rightarrow \mathbb{R}$ at $g = \begin{bmatrix} I & p \\ 0& 1 \end{bmatrix}$ reads
\begin{equation}
    d_g V = \frac{\partial}{\partial p} V(\begin{bmatrix} I & p \\ 0& 1 \end{bmatrix}) \,.
\end{equation}


\subsubsection{The Lie group $SE(3)\times \mathbb{R}^6$}\label{app:Lie_group_SE3_x_R^6}

We show a detailed construction of the Lie group $G = SE(3) \times se^*(3)$ as a matrix Lie group $G\subset GL(11,\mathbb{R})$.

In order to associate $se^*(3)$ with $Vec(6,\mathbb{R})$, note that $\Lambda^*:se^*(3) \rightarrow \mathbb{R}^6$ is a homomorphism from $(se^*(3),+)$ to $(\mathbb{R}^6,+)$, i.e. $\Lambda(P_1+P_2) = \Lambda(P_1)+\Lambda(P_2)$. By the construction of $Vec(n,\mathbb{R})$ from $(\mathbb{R}^k,+)$ in Section \ref{app:Lie_group_(R^k,+)}, we can define the homomorphism 
\begin{equation}
    \hookrightarrow:se^*(3)\rightarrow GL(7,\mathbb{R})\,;\; P\mapsto \begin{bmatrix}
        I & \Lambda(P) \\ 0 & 1
    \end{bmatrix}\,.
\end{equation}

And one can compose $SE(3)\subset GL(4,\mathbb{R})$ and $Vec(6,\mathbb{R}) \subset GL(7,\mathbb{R})$ 
\begin{equation}
    G := \{ \begin{bmatrix} H & 0 & 0 \\ 0 & I & P \\ 0& 0 & 1  \end{bmatrix} \, | \, H \in SE(3), P \in \mathbb{R}^6 \}\,.
\end{equation}
Its Lie algebra $\mathfrak{g} \subset gl(11,\mathbb{R})$ is 
\begin{equation}
    \mathfrak{g} = \{ \begin{bmatrix} \tilde{T} & 0 & 0 \\ 0 & 0 & P \\ 0& 0 & 0  \end{bmatrix} \, | \, \tilde{T} \in se(3), P \in \mathbb{R}^6 \}\,.
\end{equation}
For $A = \begin{pmatrix}
        T \\ P
    \end{pmatrix}) \in \mathbb{R}^{12}$ choose $\Lambda:\mathbb{R}^{12}\rightarrow\mathfrak{g}$ as
\begin{equation}
    \Lambda(
    \begin{pmatrix}
        T \\ P
    \end{pmatrix}) = 
    \begin{bmatrix} 
        \Lambda(T) & 0 & 0 \\ 
        0 & 0 & P \\ 
        0& 0 & 0  
    \end{bmatrix}\,,
\end{equation}
and the adjoint map is 
\begin{equation}
    \text{ad}_A = 
    \begin{bmatrix}
    \text{ad}_T & 0 \\ 0 & 0     
    \end{bmatrix}\,.
\end{equation}
The exponential map on $G$ is 
\begin{align}
    \exp:&\,\mathfrak{g} \rightarrow G\,;\; \begin{bmatrix} \tilde{T} & 0 & 0 \\ 0 & 0 & P \\ 0& 0 & 0  \end{bmatrix} \mapsto \begin{bmatrix} e^{\tilde{T}} & 0 & 0 \\ 0 & I & P \\ 0& 0 & 1  \end{bmatrix} \,.
\end{align}
Note that the exponential map is composed of the exponential maps on $SE(3)$ \eqref{eq:expSE3} and $Vec(6,\mathbb{R})$ \eqref{eq:exp_Vecn},
\begin{equation}
    \exp(\Lambda(A)) = \begin{bmatrix}
    \exp(\Lambda(T)) & 0 \\ 0 & \exp(\Lambda(P))
\end{bmatrix}\,.
\end{equation}
Thus, the differential operator $\text{d}_\Gamma:C^1(G,\mathbb{R})\rightarrow \mathbb{R}$ can be split as
\begin{align}
    \text{d}_\Gamma V(\Gamma) &= \frac{\partial}{\partial A} V(g\exp(\Lambda(A))) \\
    &= 
    \begin{pmatrix}
        \frac{\partial}{\partial q} \\ \frac{\partial}{\partial p}
    \end{pmatrix} V( g 
    \begin{bmatrix}
    \exp(\Lambda(T)) & 0 \\ 0 & \exp(\Lambda(P))
\end{bmatrix}) \nonumber \\
& = \begin{pmatrix}
        \text{d}_H \\ \text{d}_P
    \end{pmatrix} V(H,P) \,.
\end{align}
where we define $V(H,P) := V(\begin{bmatrix} H & 0 & 0 \\ 0 & I & P \\ 0& 0 & 1  \end{bmatrix})$.

\section{Training}\label{sec:appB} 
\subsection{Hyperparameters of Training}\label{ssec:appB:Hyperparameters}
Table \ref{tab:appB:Hyperparameters_quadratic_training}, Table \ref{tab:appB:Hyperparameters_nonlinear_training} and Table \ref{tab:appB:Hyperparameters_nonlinear_training_gravity} summarize the hyper parameters used for the training in Section \ref{ssec:train:quadratic_training}, Section \ref{ssec:train:nonlinear_training} and Section \ref{ssec:train:gravity}, respectively.

\begin{table}[]
    \small\sf\centering
    \caption{Hyperparameters corresponding to optimizing the quadratic controller in Section \ref{ssec:train:quadratic_training}.}
    \label{tab:appB:Hyperparameters_quadratic_training}
    \begin{tabular}{|c|c|}
    \toprule
        Variable & Value \\
    \midrule
         $H_F$ 
         &  $\begin{bmatrix}
                I & 0 \\ 0 & 1
            \end{bmatrix}$ \\
         $T$
         & 3 \\
         $w_1$
         & 4 \\
         $w_2$
         & 20 \\
         $w_3$
         & 5 \\
         $w_4$
         & 1 \\
         $w_5$
         & 1 \\
         $w_6$
         & 1 \\
         $w_7$
         & 1\\
         $w_8$
         & 1 \\
         $w_9$
         & 1 \\
         Epochs & 1200 \\
         $\eta$ over first 1000 epochs 
         & $1e-3$ \\
         $\eta$ over final 200 epochs
         & $1e-2$ \\
         $\gamma$
         &  0.999\\
         Batch size 
         &  2048 \\
         ODE Solver 
         &  Dormand-Prince 5\\
         \texttt{rtol}
         &  $1e-5$\\
         \texttt{atol}
         &  $1e-4$\\
         \texttt{rtol\_adjoint}
         & $1e-5$ \\
         \texttt{atol\_adjoint}
         & $1e-4$\\
    \bottomrule
    \end{tabular}
\end{table}


\begin{table}[]
    \small\sf\centering
    \caption{Hyperparameters corresponding to optimizing the nonlinear controller in Section \ref{ssec:train:nonlinear_training}.}
    \label{tab:appB:Hyperparameters_nonlinear_training}
    \begin{tabular}{|c|c|}
    \toprule
    Variable & Value \\
    \midrule
         $H_F$ 
         &  $\begin{bmatrix}
                I & 0 \\ 0 & 1
            \end{bmatrix}$ \\
         $T$
         & 3 \\
         $w_1$
         & 4 \\
         $w_2$
         & 10 \\
         $w_3$
         & 5 \\
         $w_4$
         & 1 \\
         $w_5$
         & 1 \\
         $w_6$
         & 1 \\
         $w_7$
         & 1\\
         $w_8$
         & 1 \\
         $w_9$
         & 1 \\
         Epochs & 1200 \\
         $\eta$ over first 1000 epochs 
         & $1e-3$ \\
         $\eta$ over final 200 epochs
         & $1e-3$ \\
         $\gamma$
         &  0.999\\
         Batch size 
         &  2048 \\
         ODE Solver 
         &  Dormand-Prince 5\\
         \texttt{rtol}
         &  $1e-5$\\
         \texttt{atol}
         &  $1e-4$\\
         \texttt{rtol\_adjoint}
         & $1e-5$ \\
         \texttt{atol\_adjoint}
         & $1e-4$\\
    \bottomrule
    \end{tabular}
\end{table}

\begin{table}[]
    \small\sf\centering
    \caption{Hyperparameters corresponding to optimizing a nonlinear controller including gravity in the cost, as in Section \ref{ssec:train:gravity}.}
    \label{tab:appB:Hyperparameters_nonlinear_training_gravity}
    \begin{tabular}{|c|c|}
    \toprule
    Variable & Value \\
    \midrule
         $H_F$ 
         &  $\begin{bmatrix}
                I & p_F \\ 0 & 1
            \end{bmatrix}$ 
            with $p_F = (0,0,-1)^T$ \\
         $T$
         & 3 \\
         $w_1$
         & 4 \\
         $w_2$
         & 4 \\
         $w_3$
         & 5 \\
         $w_4$
         & $5e-4$ \\
         $w_5$
         & 1 \\
         $w_6$
         & 1 \\
         $w_7$
         & 1\\
         $w_8$
         & $1e-4$ \\
         $w_9$
         & 1 \\
         Epochs & 1000 \\
         $\eta$ 
         & $1e-3$ \\
         $\gamma$
         &  0.999\\
         Batch size 
         &  2048 \\
         ODE Solver 
         &  Dormand-Prince 5\\
         \texttt{rtol}
         &  $1e-5$\\
         \texttt{atol}
         &  $1e-4$\\
         \texttt{rtol\_adjoint}
         & $1e-5$ \\
         \texttt{atol\_adjoint}
         & $1e-4$\\
    \bottomrule
    \end{tabular}
\end{table}

\end{sm}

\end{document}